\numberwithin{equation}{section}
\newtheorem{definition}{Definition}[section]
\newtheorem{theorem}{Theorem}[section]
\newaliascnt{lemma}{theorem}
\newtheorem{lemma}[lemma]{Lemma}
\newaliascnt{corollary}{theorem}
\newtheorem{corollary}[corollary]{Corollary}
\newaliascnt{proposition}{theorem}
\newtheorem{proposition}[proposition]{Proposition}
\newaliascnt{remark}{theorem}
\newtheorem{remark}[remark]{Remark}
\newaliascnt{example}{theorem}
\newtheorem{assumption}{Assumption}
\Crefname{theorem}{Theorem}{Theorems}
\Crefname{lemma}{Lemma}{Lemmas}
\Crefname{corollary}{Corollary}{Corollaries}
\Crefname{proposition}{Proposition}{Propositions}
\Crefname{remark}{Remark}{Remarks}
\Crefname{example}{Example}{Examples}
\Crefname{definition}{Definition}{Definitions}
\Crefname{assumption}{Assumption}{Assumptions}
\DeclareMathOperator{\E}{\mathbb{E}}
\DeclareMathOperator{\Bernoulli}{Bernoulli}
\DeclareMathOperator{\asconv}{\; \xrightarrow{\text{a.s.}} \;}
\newcommand{\stable}{\mathcal{F}_\infty \operatorname{-stable}}
\DeclareMathOperator*{\argmax}{arg\,max}
\DeclareMathOperator{\tv}{\mathsf{TV}}
\newcommand*\diff{\mathop{}\!\mathrm{d}}
\newcommand{\indep}{\perp \!\!\! \perp}
\newcommand{\diversity}{\mathsf{S}_{\alpha, \mu}}
\newcommand{\res}{\mathsf{Res}}
\newcommand{\dd}{\mathrm{d}}
\newcommand{\PP}{\mathbb{P}}
\newcommand{\simp}{{\mathsf{p}}}
\newcommand{\editline}[1]{#1}
\begin{document}

\title{
Asymptotic Inference for Exchangeable Gibbs Partitions
}

\author{Takuya Koriyama
\\
{The University of Chicago}\\
\href{mailto:tkoriyam@uchicago.edu}{\texttt{tkoriyam@uchicago.edu}
}
}
\maketitle

\begin{abstract}
We study the asymptotic properties of parameter estimation and predictive inference under the exchangeable Gibbs partition, characterized by a discount parameter $\alpha\in(0,1)$ and a triangular array $v_{n,k}$ satisfying a backward recursion. Assuming that $v_{n,k}$ admits a mixture representation over the Ewens--Pitman family $(\alpha, \theta)$, with $\theta$ integrated by an unknown mixing distribution, we show that the (quasi) maximum likelihood estimator $\hat\alpha_n$ (QMLE) for  $\alpha$ is asymptotically mixed normal. This generalizes earlier results for the Ewens--Pitman model to a more general class. We further study the predictive task of estimating the probability simplex $\mathsf{p}_n$, which governs the allocation of the $(n+1)$-th item, conditional on the current partition of $[n]$. Based on the asymptotics of the QMLE $\hat{\alpha}_n$, we construct an estimator $\hat{\mathsf{p}}_n$ and derive the limit distributions of the $f$-divergence $\mathsf{D}_f(\hat{\mathsf{p}}_n||\mathsf{p}_n)$ for general convex functions $f$, including explicit results for the TV distance and KL divergence. These results lead to asymptotically valid confidence intervals for both parameter estimation and prediction.
\end{abstract}

\tableofcontents

\section{Introduction}
For any positive integer \( n \in \mathbb{N} \) and any integer \( k \) with \( 1 \le k \le n \), a \emph{partition} of \([n] \equiv \{1, 2, \dots, n\}\) into \( k \) blocks is an unordered collection of nonempty, pairwise disjoint subsets whose union is \([n]\). Let \(\mathcal{P}_n^k\) denote the set of all such partitions of \([n]\) into \( k \) blocks, and define \(\mathcal{P}_n \equiv \bigcup_{k=1}^n \mathcal{P}_n^k\) to be the set of all partitions of \([n]\). For instance, when $n=3$, we have $\mathcal{P}_3 = \mathcal{P}_3^{1} \cup \mathcal{P}_3^{2} \cup \mathcal{P}_3^{3}$ with 
\begin{align*}
  \mathcal{P}_3^{1} &= \bigl\{\{1, 2, 3\}\bigr\}, \\\mathcal{P}_3^{2} &= \Bigl\{\bigl\{\{1, 2\},  \{3\}\bigr\}, \  \bigl\{\{1\}, \{2, 3\}\bigr\}, \ \bigl\{ \{1,3\}, \{2\}\bigr\}\Bigr\},\\
  \mathcal{P}_{3}^3 &= \bigl\{\{1\},\{2\},\{3\}\bigr\}. 
\end{align*}
An \emph{exchangeable Gibbs partition} is a stochastic process taking values in \(\{\mathcal{P}_n\}_{n=1}^\infty\) defined as follows:

\begin{algorithm}[H]
  \caption{Sequential definition of the Exchangeable Gibbs partition}\label{alg:gibbs}
  \KwIn{$\alpha \in (-\infty, 1)$ and a non-negative triangular array $(v_{n,k})$ satisfying the backward recursion}
  \begin{equation}\label{eq:recursion}
     v_{1,1} = 1 \quad \text{and} \quad 
  v_{n,k} = (n - \alpha k)\, v_{n+1, k} + v_{n+1, k+1} \ \  \text{for all} \ \ 1\le k \le n, \ n\in \mathbb{N}
  \end{equation}
  Initialize with $\Pi_1 = \big\{\{1\}\big\}$\;
  \For{$n = 1, 2, \dots$}{
    Given $\Pi_n = \{U_1, \dots, U_{\mathsf{k}_n}\} \in \mathcal{P}_n$, where $\mathsf{k}_n$ is the current number of blocks, define the probability simplex $\bm{\simp}_n \in \Delta^{\mathsf{k}_n + 1}$ by
    \begin{equation}\label{eq:intro_simplex}
      \bm{\simp}_n = (\simp_{n,0}, \simp_{n,1}, \dots, \simp_{n,\mathsf{k}_n}), \quad 
      \simp_{n,i} \equiv
      \begin{cases}
        \frac{v_{n+1, \mathsf{k}_n + 1}}{v_{n,\mathsf{k}_n}} & i = 0, \\[5pt]
        \frac{v_{n+1, \mathsf{k}_n}}{v_{n,\mathsf{k}_n}} \big( |U_i| - \alpha \big) & i \in \{1, \dots, \mathsf{k}_n\}.
      \end{cases}
    \end{equation}
    Generate $\Pi_{n+1} \in \mathcal{P}_{n+1}$ by assigning the $(n+1)$-th element according to
    \begin{equation}\label{eq:rule}
      (n+1) \in 
      \begin{cases}
        \text{a new block} & \text{with probability } \simp_{n,0}, \\
        U_i & \text{with probability } \simp_{n,i}, \quad i \in \{1, \dots, \mathsf{k}_n\}.
      \end{cases}
    \end{equation}
  }
\end{algorithm}

Here, the parameter \(\alpha\) and the triangular array \((v_{n,k})\) satisfy \(\alpha \in (-\infty, 1)\) and the backward recursion \eqref{eq:recursion}
so that \(\bm{\simp}_n\) defined in \eqref{eq:intro_simplex} is indeed a valid probability simplex, i.e., \(\sum_{i} \simp_{n,i} = 1\) and $\simp_{n,i}\ge 0$.   
An example of an array \((v_{n,k})\) satisfying \eqref{eq:recursion} is given by the well-known two-parameter family:
\begin{align}\label{eq:v_nk_ewens}
  v_{n,k}(\alpha, \theta) 
  \;=\; \frac{\prod_{i=1}^{k-1} (\theta + i \alpha)}{\prod_{i=1}^{n-1} (\theta + i)}, 
  \quad \text{with} \quad
  \begin{cases}
    \theta > -\alpha, & \text{if } 0 \le \alpha < 1, \\
    \theta = -k \alpha \text{ for some } k \in \mathbb{N}, & \text{if } \alpha < 0.
  \end{cases}
\end{align}
The corresponding random partition is known as the \emph{Ewens--Pitman partition $(\alpha,\theta)$}.

Notably, the marginal distribution of 
\(\Pi_n = \{ U_1, \dots, U_{\mathsf{k}_n} \} \in \mathcal{P}_n\) under the Gibbs partition model is given by
\begin{align}\label{eq:intro_marginal_likelihood}
  \PP\bigl( \Pi_n = \{ U_1, \dots, U_{\mathsf{k}_n} \} \bigr)
  = v_{n, \mathsf{k}_n}
  \prod_{i=1}^{\mathsf{k}_n} \prod_{j=1}^{|U_i|-1} (j - \alpha)
  = v_{n, \mathsf{k}_n}
    \prod_{j=2}^{n} 
    \left( \prod_{i=1}^{j-1} (i - \alpha) \right)^{\mathsf{k}_{n,j}},
\end{align}
where 
\(\mathsf{k}_{n,j} = \sum_{i=1}^{\mathsf{k}_n} \mathbbm{1}\big\{ |U_i| = j \big\}\) 
denotes the number of blocks of size \(j\). 
Since \(\sum_{j=1}^{n} \mathsf{k}_{n,j} = \mathsf{k}_n\), equation \eqref{eq:intro_marginal_likelihood} shows that the block sizes \((\mathsf{k}_{n,j})_{j=1}^n\) are sufficient statistics for the Gibbs partition model.  
In particular, the marginal likelihood \eqref{eq:intro_marginal_likelihood} is invariant under any permutation of the labels in \([n] = \{1, \dots, n\}\).  
In general, random partitions with this property are called \emph{exchangeable random partitions}, and a large class of such exchangeable partitions can be represented by the Gibbs partition model.  
Indeed, \cite{gnedin2006exchangeable} show that for any exchangeable partition model whose marginal likelihood $\PP(\Pi_n = \{ U_1, \dots, U_{\mathsf{k}_n} \})$ are consistent as \(n\) varies and take the product form 
\( V_{n, \mathsf{k}_n} \prod_{i=1}^{\mathsf{k}_n} W_{|U_i|} \)
for some non-negative triangular array \((V_{n,k})\) and weight \((W_\ell)\), then $W_\ell$ must take the form $\prod_{j=1}^{\ell-1} (j - \alpha)$ for some $\alpha\in(-\infty,1)$ and the array \((V_{n,k})\)  must satisfy the backward recursion \eqref{eq:recursion}.

\paragraph{Asymptotics of the Gibbs partition}
By the definition \eqref{eq:intro_simplex}--\eqref{eq:rule}, the probability that the next item belongs to a current block $U_i$ is proportional to $(|U_i|-\alpha)$, which implies that the Gibbs partition exhibits a \emph{rich-get-richer} dynamics: blocks with larger sizes are more likely to attract new elements, while the discount parameter \(\alpha\) counteracts this tendency. Therefore, intuitively, the smaller the value of \(\alpha\), the smaller the expected number of non-empty blocks \(\mathsf{k}_n\). This intuition is reflected in the asymptotic behavior of \(\mathsf{k}_n\): \cite{gnedin2006exchangeable} show that for any Gibbs partition model with a parameter $\alpha\in (-\infty, 1)$ and an array $(v_{n,k})$ satisfying \eqref{eq:recursion}, there exists a positive random variable \(\mathsf{S}_\alpha\), whose law depends on \(\alpha\) and \((v_{n,k})\), such that
\begin{align}\label{eq:intro_sparse}
  \frac{\mathsf{k}_n}{c_n(\alpha)} \; \xrightarrow{\text{a.s.}} \; \mathsf{S}_\alpha
  \quad \text{where} \quad 
  c_n(\alpha) \equiv 
  \begin{cases}
    n^\alpha, & 0 < \alpha < 1, \\
    \log n, & \alpha = 0, \\
    1, & \alpha < 0,
  \end{cases}
\end{align}
When \(\alpha \in (0,1)\), the limit \(\mathsf{S}_\alpha\) is referred to as the \emph{$\alpha$-diversity}.  
Furthermore, for \(\alpha \in (0,1)\), the block size distribution $(\mathsf{k}_{n,j}/\mathsf{k}_n)_{j=1}^n$ converges to a discrete distribution:
\begin{align}\label{eq:intro_powerlaw}
  \forall j \in \mathbb{N}, \quad 
  \frac{\mathsf{k}_{n,j}}{\mathsf{k}_n} \; \xrightarrow{\text{a.s.}} \; 
  \mathfrak{p}_\alpha(j) 
  \equiv \frac{\alpha \prod_{i=1}^{j-1} (i - \alpha)}{j!},
\end{align}
where \(\mathsf{k}_{n,j} / \mathsf{k}_n\) denotes the proportion of blocks of size \(j\) among the current partition of $[n]$; see \cite[Lemma 3.11]{pitman2006combinatorial} and \cite{gnedin2007notes}. The limit $\mathfrak{p}_\alpha(j)$ defines a valid probability mass function on $\mathbb{N}$, known as the \emph{Sibuya distribution} in the literature \cite{sibuya1979generalized}. It has been studied in connection with the stable distribution and Mittag-Leffler distribution \cite{christoph1998discrete, devroye1993triptych, pillai1995discrete, pakes1995characterization}; see also \cite{resnick2007heavy} for its relevance in extreme value theory.
By Stirling's approximation, the tail of the discrete distribution behaves as
$$
\mathfrak{p}_\alpha(j) \sim \frac{\alpha}{\Gamma(1-\alpha)} \cdot j^{-(1 + \alpha)} \quad \text{as $j\to+\infty$}.
$$
Combined with \eqref{eq:intro_powerlaw}, this means that the block size distribution $(\mathsf{k}_{n,j}/\mathsf{k}_n)_{j=1}^n$ converges asymptotically to a power law with exponent $(\alpha+1)$. 

\paragraph{Related Literature}  
The asymptotic properties of the block sizes $(\mathsf{k}_{n,j})_{j=1}^n$, as characterized in \eqref{eq:intro_sparse}–\eqref{eq:intro_powerlaw} above, make the Gibbs partition a versatile and powerful statistical model across a wide range of domains. Notable applications include species sampling problems \cite{balocchi2022bayesian, favaro2021near, favaro2009bayesian, sibuya2014prediction, battiston2018multi}, nonparametric Bayesian inference \cite{caron2017generalized, dahl2017random, ayed2019beyond, rigon2025enriched}, disclosure risk assessment \cite{favaro2021bayesian, hoshino2001applying}, network analysis \cite{crane2016edge, naulet2021asymptotic}, natural language processing \cite{teh2006hierarchical, sato2010topic}, and forensic science \cite{cereda2022learning}. While this list is not exhaustive, it highlights the broad applicability of Gibbs-type partitions, particularly their ability to capture power-law cluster size distributions that arise naturally in many real-world settings.

In this literature, considerable attention has been devoted to estimating the discount parameter $\alpha \in (0,1)$. A natural estimator, $\hat{\alpha}_n^{\text{naive}} \equiv \log \mathsf{k}_n / \log n$, is consistent by virtue of \eqref{eq:intro_sparse}, but converges at the slow rate of $1/\log n$. This suboptimal rate results from the fact that $\hat{\alpha}_n^{\text{naive}}$ depends only on the summary statistic $\mathsf{k}_n = \sum_{j=1}^n \mathsf{k}_{n,j}$, without leveraging the full sufficient statistic $(\mathsf{k}_{n,j})_{j=1}^n$ for the likelihood \eqref{eq:intro_marginal_likelihood}. 

More refined estimators that utilize the full sufficient statistic $(\mathsf{k}_{n,j})_{j=1}^n$ have been recently studied. Let us define $\hat{\alpha}_n$ as the maximizer of the likelihood \eqref{eq:intro_marginal_likelihood} with the weight $v_{n,k}$ replaced by the Ewens--Pitman weight \eqref{eq:v_nk_ewens} with $\theta = 0$:
\begin{align*}
\hat{\alpha}_n \in \argmax_{\alpha\in (0,1)} \ \alpha^{\mathsf{k}_n} \prod_{j=2}^{n} 
    \left( \prod_{i=1}^{j-1} (i - \alpha) \right)^{\mathsf{k}_{n,j}}.
\end{align*}
We refer to this estimator as the quasi-maximum likelihood estimator (QMLE). The QMLE attains the convergence rate of $n^{-\alpha/2}$, as established in \cite{favaro2021bayesian,balocchi2022bayesian, franssen2022bernstein}. The analysis in these works builds on Kingman's representation theorem \cite{kingman1978representation, kingman1982coalescent}, which serves as the analogue of de Finetti's theorem for exchangeable random partitions. 

However, the precise asymptotic distribution of the QMLE has thus far been established only for the Ewens--Pitman model. In particular, \cite{koriyama2022asymptotic} derive the following limit distribution for the QMLE:
\begin{align}\label{eq:intro_qmle_limit_dist}
\sqrt{\mathsf{k}_n \mathfrak{i}(\alpha)} \cdot (\hat{\alpha}_n - \alpha) \xrightarrow{\mathrm{d}} \mathcal{N}(0,1),
\end{align}
where $\mathfrak{i}(\alpha)$ denotes the Fisher information of the discrete distribution with pmf $\mathfrak{p}_\alpha(j)$:
\[
\mathfrak{i}(\alpha) = \sum_{j=1}^\infty \mathfrak{p}_\alpha(j) \left( \frac{\partial}{\partial \alpha} \log \mathfrak{p}_\alpha(j) \right)^2.
\]
This result provides an asymptotic confidence interval for $\alpha$. Then, a natural question is whether the convergence in \eqref{eq:intro_qmle_limit_dist} also holds under Gibbs partition models with general weight $(v_{n,k})$.

In addition to parameter estimation, statisticians are also interested in \textit{prediction}—a practical task often more relevant in applications. Specifically, we aim to estimate the random simplex $\bm{\simp}_n \in \Delta^{\mathsf{k}_n + 1}$ defined in \eqref{eq:intro_simplex}, which describes the probability of assigning the $(n+1)$-st element to each block. This prediction problem is more challenging, as it requires simultaneous estimation of both $\alpha$ and the triangular array $(v_{n,k})$, and the length of the simplex $\bm{\simp}_n$ grows in the rate $n^\alpha$. 
Indeed, the only prior work in this direction is \cite{arbel2021approximating}, which analyzes the approximation error of estimating $\bm{\simp}_n$ by that of the Ewens--Pitman partition $(\alpha,\theta)$. However, in practical settings, the parameters $(\alpha,\theta)$ are unknown and must be estimated from data. The additional error and complications introduced by this estimation step are not addressed in \cite{arbel2021approximating}.

\paragraph{Contribution}  
In this paper, we address the two aforementioned questions under the assumption that $v_{n,k}$ admits a mixture representation of the Ewens--Pitman weights $v_{n,k}(\alpha, \theta)$, where $\theta$ is integrated with respect to a mixing measure $\mu$ (see \Cref{assumption}). Under this setting, we show that the QMLE $\hat{\alpha}_n$ retains the asymptotic mixed normality given by \eqref{eq:intro_qmle_limit_dist}. In addition, we construct a fully data-driven estimator \(\bm{\hat\simp}_n\) for the simplex \(\bm{\simp}_n\) and establish a limit distribution for the $f$-divergence $\mathsf{D}_f(\bm{\hat{\simp}}_n \| \bm{\simp}_n)$, covering both the total variation distance and the Kullback--Leibler divergence. In particular, we show
\[
n \sqrt{\frac{\mathfrak{i}(\alpha)}{\mathsf{k}_n}} \cdot \tv(\bm{\hat\simp}_n, \bm{\simp}_n) \xrightarrow{\mathrm{d}} |\mathcal{N}(0,1)|,
\]
which enables predictive inference for the assignment of the $(n+1)$-th element, equipped with a confidence interval.

\editline{
\paragraph{Notation} 
For any non-negative sequence $\{a_n\}_{n=1}^\infty, \{b_n\}_{n=1}^\infty$, we write $a_n \sim  b_n$ if $\lim_{n\to+\infty} a_n/b_n = 1$,  $a_n\ll b_n$ if $\lim_{n\to+\infty}a_n/b_n = 0$, and $a_n \lesssim b_n$ if $\limsup_{n\to+\infty} a_n/b_n <+\infty$. 
}
\paragraph{Organization}
\Cref{sec:alpha_diversity} introduces the working assumptions and establishes key asymptotic properties of $\mathsf{k}_n$. \Cref{sec:parameter_estimate} focuses on the estimation of $\alpha$, presenting the QMLE and deriving its asymptotic distribution. \Cref{sec:prediction} develops an estimator $\hat{\bm{\simp}}_n$ for the simplex $\bm{\simp}_n$ and derives the asymptotic distribution of the $f$-divergence $\mathsf{D}_f(\bm{\hat{\simp}}_n \| \bm{\simp}_n)$. Finally, \Cref{sec:simulation} provides numerical experiments that illustrate and validate the theoretical findings.

\section{$\alpha$-diversity}\label{sec:alpha_diversity}

As we explained in the introduction, the Gibbs partition has a discount parameter $\alpha\in (-\infty, 1)$ and a triangular array \(\{v_{n,k}\}\), satisfying the backward recursion \eqref{eq:recursion}. Throughout of this paper, we focus on the regime $\alpha\in(0,1)$, where a canonical example is given by the Ewens--Pitman models, parameterized by \(\theta > -\alpha\):
\[
v_{n,k}(\alpha, \theta) 
= \frac{
    \prod_{i=1}^{k-1} (\theta + i\alpha)
}{
    \prod_{i=1}^{n-1} (\theta + i)
}, 
\quad \theta > -\alpha.
\]
In this paper, we assume that the array \(v_{n,k}\) admits a mixture representation over this Ewens--Pitman family:

\begin{assumption}\label{assumption}
Let $\alpha\in (0,1)$. Suppose there exists a probability measure \(\mu\), independent of \(\alpha\), with bounded support \(\operatorname{supp}(\mu) \subset [\underline{\theta}, \bar{\theta}]\) for some \(\underline{\theta}, \bar{\theta} \in (-\alpha, +\infty)\) satisfying \(\underline{\theta} \le 0 \le \bar{\theta}\), such that 
\[
\forall n\in \mathbb{N}, \ \forall k\in \{1, \dots, n\}, \quad 
v_{n,k}
= \int \dd \mu(\theta) v_{n,k}(\alpha, \theta)
= \int 
\dd \mu(\theta)
\frac{
    \prod_{i=1}^{k-1} (\theta + i\alpha)
}{
    \prod_{i=1}^{n-1} (\theta + i)
}
\]
\end{assumption}
\editline{The bounded support assumption on $\mu$ is the artifact of our proof, and we confirm by numerical simulation that our main theorems extend to unbounded distributions as well (see \Cref{sec:simulation}).}

We write  \(v_{n,k} = v_{n,k}(\alpha, \mu)\) to emphasize the dependence on both $\alpha$ and $\mu$. In the special case where \(\mu\) is the Dirac measure \(\delta_{\theta}\) for some \(\theta \in (-\alpha, +\infty)\), the mixture reduces to the Ewens--Pitman model \((\alpha, \theta)\). It is straightforward to verify that any triangular array $\{v_{n,k}\}$ satisfying \Cref{assumption} also satisfies the backward recursion \eqref{eq:recursion}.


It is worth discussing, among all solutions \(v_{n,k}\) to the recursion \eqref{eq:recursion}, what subset is captured by \Cref{assumption}. In this regard, \cite{gnedin2006exchangeable} shows that any Gibbs partition can be realized via conditional iid sampling from a random discrete measure known as an \emph{$\alpha$-stable Poisson--Kingman model}, parameterized by a scalar \(\alpha\in (0,1)\) and a probability measure \(\gamma\) on \((0, \infty)\). With this representation theorem, the corresponding weight $v_{n,k}$ can be written explicitly as 
\begin{align*}
  v_{n,k} = v_{n,k}(\alpha; \gamma) \equiv \int_0^\infty \frac{\alpha^k t^{-n}}{\Gamma(n - k\alpha) f_\alpha(t)} \left( \int_0^t s^{n - k\alpha -1} f_\alpha(t - s) \, \dd s \right) \dd \gamma(t),
\end{align*}
where \(f_\alpha\) is the density whose Laplace transform satisfies $\int_0^\infty e^{-\lambda x} f_\alpha(x) \, \dd x = e^{-\lambda^\alpha}$ (see  \cite{lijoi2008investigating}). Moreover, the law \(\gamma\) can be estimated by the asymptotic law of the number of blocks \(\mathsf{k}_n\) via the following transformation:
\[
\gamma =^d \mathsf{S}_\alpha^{-1/\alpha} \quad \text{where} \quad \mathsf{S}_\alpha = \lim_{n\to+\infty} n^{-\alpha} \mathsf{k}_n \text{ (a.s.)}. 
\]
Consequently, the class of solutions covered by \Cref{assumption} corresponds to the range of distributions of \(\mathsf{S}_\alpha^{-1/\alpha}\) that can be generated by mixtures over the Ewens--Pitman family via \(\mu\). In the next theorem, we briefly examine the law of \(\mathsf{S}_\alpha\). \editline{We will also visualize the limit distribution $\diversity$ for varying $\mu$ by 
plotting the histogram of $n^{-\alpha}\mathsf{k}_n$ for sufficiently large $n$ (see \Cref{fig:diversity}), demonstrating that \Cref{assumption} covers a non-trivial subclass of the Gibbs partitions that cannot be represented by the Ewens--Pitman family ($\mu=\delta_\theta$).} A comprehensive characterization of the law $\diversity$ remains open and is left for future work.

\begin{theorem}[$\alpha$-diversity]\label{thm:Lp_converge}
  Let \Cref{assumption} be satisfied. Then, there exists a positive random variable $\diversity$ with bounded second moment such that 
  \begin{align*}
    n^{-\alpha} \mathsf{k}_n \to \diversity \quad \text{almost surely and in $L_2$}.
  \end{align*}
  Furthermore, the first moment of the almost sure limit $\diversity$ is bounded as 
  \begin{align}\label{eq:alpha_diversity_moment_ineq}
          \frac{1}{\alpha} \frac{\Gamma(\underline\theta+1)}{\Gamma(\underline\theta + \alpha)} \le \E[\diversity] \le \frac{1}{\alpha} \frac{\Gamma(\bar\theta+1)}{\Gamma(\bar\theta + \alpha)}. 
  \end{align}
\end{theorem}
This theorem establishes not only the almost sure convergence but also the $L_2$ convergence, which does not directly follow from the results of \cite{gnedin2006exchangeable}. Our proof leverages a basic martingale convergence argument, inspired by the approach in \cite{bercu2024martingale}. 

\begin{remark}
When $\mu=\delta_\theta$, the convergence $n^{-\alpha}\mathsf{k}_n\to \diversity$ is well studied in literatures. In this case, the law of $\diversity$ is referred to as generalized Mittag-Leffler distribution, which has finite $p$-th moments for any $p>-(1+\theta/\alpha)$, with the following closed-form expression:
$$
\E\bigl[(\mathsf{S}_{\alpha,\delta_\theta})^p\bigr] = \frac{\Gamma(\theta+1)}{\Gamma(\theta/\alpha + 1)} \frac{\Gamma(\theta/\alpha + p+1)}{\Gamma(\theta + p\alpha + 1)}. 
$$
This is consistent with the moment bound in \eqref{eq:alpha_diversity_moment_ineq} by taking $\mu=\delta_\theta$. 
Moreover, several works have extended the convergence $n^{-\alpha}\mathsf{k}_n\to \diversity$ for $\mu=\delta_\theta$: 
\cite{dolera2020berry} proves a Berry-Esseen-type theorem, showing that the Kolmogorov distance between $n^{-\alpha}\mathsf{k}_n$ and $\diversity$ decays at the rate $n^{-\alpha}$.  \cite{bercu2024martingale} shows the CLT of the form $n^{\alpha/2} (n^{-\alpha} \mathsf{k}_n-\mathsf{S}_{\alpha, \mu}) \to^d N \sqrt{\mathsf{S}_{\alpha,\mu}'}$ for $N\sim N(0,1)\indep \diversity' =^d \diversity$, and the law of the iterated logarithm for $\mathsf{k}_n$. 
\end{remark}

Let $\ell_n(\alpha, \mu)$ be the log-likelihood of the Gibbs partition where the parameters  $(\alpha, \{v_{n,k}\})$ satisfy \Cref{assumption} for some mixture $\mu$. 
From the marginal likelihood formula \eqref{eq:intro_marginal_likelihood},  $\ell_n(\alpha, \mu)$ can be expressed as 
\begin{align}\label{eq:loglikelihood}
\ell_n(\alpha; \mu) \equiv 
 \log \Bigl(\int \dd \mu(\theta) \frac{\prod_{i=1}^{\mathsf{k}_n-1} (\theta + i\alpha)}{\prod_{i=1}^{n-1}(\theta + i)} \Bigr) + \sum_{j=2}^{n} \mathsf{k}_{n,j} \sum_{i=1}^{j-1} \log(i-\alpha).   
\end{align}
As an application of the $L_2$ convergence result in \Cref{thm:Lp_converge}, we now investigate the asymptotic behavior of the Fisher information with respect to the discount parameter $\alpha\in (0,1)$. Specially, we analyze the leading term of the variance $\E[(\partial_\alpha \ell_n(\alpha; \mu))^2]$, which characterizes the Fisher information for the parameter $\alpha$ in the Gibbs partition model. 

\begin{theorem}\label{theorem:fisher}
The Fisher Information for the discount parameter $\alpha\in(0,1)$ in the Gibbs partition model behaves asymptotically as 
  $$
  \E\Bigl[\bigl(\partial_\alpha \ell_n(\alpha; \mu)\bigr)^2\Bigr] \sim n^\alpha \E[\diversity] \mathfrak{i}(\alpha), 
  $$
  where $\diversity=\lim_{n\to+\infty}n^{-\alpha} \mathsf{k}_n$ denotes the $\alpha$-diversity, and 
  $\mathfrak{i}(\alpha)$ is the Fisher Information for $\alpha$ in the discrete distribution with pmf $\mathfrak{p}_\alpha(j)$, defined as 
  \begin{align}\label{eq:fisher_sibuya}
    \mathfrak{i}(\alpha) \equiv \sum_{j=1}^\infty \mathfrak{p}_\alpha(j) \Bigl(\frac{\partial}{\partial \alpha}\log \mathfrak{p}_\alpha(j)\Bigr)^2  \quad \text{with} \quad \mathfrak{p}_\alpha(j) 
= \frac{\alpha \prod_{i=1}^{j-1} (i - \alpha)}{j!}.
  \end{align}
\end{theorem}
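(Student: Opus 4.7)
The plan is to invoke the second Bartlett identity. Since the partition $\Pi_n$ takes values in the finite set $\mathcal{P}_n$ and the likelihood in \eqref{eq:intro_marginal_likelihood} is polynomial in $\alpha$ under Assumption \ref{assumption}, differentiating the identity $\sum_{\pi}\PP_{\alpha,\mu}(\pi)=1$ twice in $\alpha$ is justified term-by-term and yields
\[
\E\bigl[(\partial_\alpha \ell_n(\alpha;\mu))^2\bigr]\;=\;-\,\E\bigl[\partial_\alpha^2 \ell_n(\alpha;\mu)\bigr].
\]
This reduces the problem to extracting the leading $n^\alpha$-term of $-\E[\partial_\alpha^2\ell_n]$, which is more tractable because the Hessian decouples into a per-block sum plus a remainder coming from the mixture factor in \eqref{eq:loglikelihood}.

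Direct differentiation of \eqref{eq:loglikelihood} gives
\[
-\partial_\alpha^2 \ell_n \;=\; -\partial_\alpha^2 \log M_n(\alpha)\;+\;\sum_{j=2}^{n}\mathsf{k}_{n,j}\sum_{i=1}^{j-1}\frac{1}{(i-\alpha)^2},
\]
where $M_n(\alpha):=\int \dd\mu(\theta)\,g_n(\alpha,\theta)$ with $g_n(\alpha,\theta):=\prod_{i=1}^{\mathsf{k}_n-1}(\theta+i\alpha)/\prod_{i=1}^{n-1}(\theta+i)$. Writing $\dd\pi_n(\theta)\propto g_n(\alpha,\theta)\,\dd\mu(\theta)$ for the posterior and $A_n(\theta):=\partial_\alpha\log g_n(\alpha,\theta)=\sum_{i=1}^{\mathsf{k}_n-1}\tfrac{i}{\theta+i\alpha}$, the standard formula $\partial_\alpha^2 \log M_n = \E_{\pi_n}[\partial_\alpha A_n(\theta)]+\mathrm{Var}_{\pi_n}(A_n(\theta))$ applies. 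The algebraic identity $\tfrac{i}{\theta+i\alpha}=\tfrac{1}{\alpha}-\tfrac{\theta/\alpha}{\theta+i\alpha}$ shows that $A_n(\theta)=\mathsf{k}_n/\alpha+O(\log n)$ and $\partial_\alpha A_n(\theta)=-(\mathsf{k}_n-1)/\alpha^2+O(\log n)$, both uniformly in $\theta\in[\underline\theta,\bar\theta]$; moreover $A_n$ is $O(\log n)$-Lipschitz in $\theta$ on this compact interval, so $\mathrm{Var}_{\pi_n}(A_n)=O((\log n)^2)$. Combining these yields the deterministic pathwise estimate $-\partial_\alpha^2 \log M_n(\alpha)=\mathsf{k}_n/\alpha^2+O((\log n)^2)$.

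Plugging back and using $-\partial_\alpha^2 \log \mathfrak{p}_\alpha(j)=1/\alpha^2+\sum_{i=1}^{j-1}(i-\alpha)^{-2}$, one rewrites
\[
-\partial_\alpha^2 \ell_n \;=\; \sum_{j=1}^{n}\mathsf{k}_{n,j}\bigl(-\partial_\alpha^2 \log \mathfrak{p}_\alpha(j)\bigr)+R_n,\qquad |R_n|\le C(\log n)^2,
\]
for a deterministic $C=C(\alpha,\underline\theta,\bar\theta)$. The crucial observation is that $b(j):=-\partial_\alpha^2\log\mathfrak{p}_\alpha(j)$ is bounded by $C_0:=1/\alpha^2+\sum_{i=1}^\infty(i-\alpha)^{-2}<\infty$. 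Hence $Y_n:=\mathsf{k}_n^{-1}\sum_j\mathsf{k}_{n,j}b(j)\in[0,C_0]$ almost surely, and the a.s.\ convergences $\mathsf{k}_{n,j}/\mathsf{k}_n\to\mathfrak{p}_\alpha(j)$ from \eqref{eq:intro_powerlaw}, together with bounded convergence in the index $j$, give $Y_n\to\mathfrak{i}(\alpha)$ a.s. Pairing this with the $L_2$ convergence $X_n:=n^{-\alpha}\mathsf{k}_n\to\diversity$ from Theorem \ref{theorem:Lp_converge}, the product $X_nY_n$ converges a.s.\ to $\diversity\cdot\mathfrak{i}(\alpha)$ and is $L_2$-bounded, hence uniformly integrable, so Vitali gives $\E[X_nY_n]\to\E[\diversity]\mathfrak{i}(\alpha)$. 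Since $\E[|R_n|]/n^\alpha=O((\log n)^2/n^\alpha)\to0$, the claimed asymptotic follows.

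The main obstacle I anticipate is the mixture term $\partial_\alpha^2\log M_n$: because $\mu$ need not be a point mass, the posterior $\pi_n$ is a random, data-dependent measure, and a naive Cauchy--Schwarz bound would give only $\mathrm{Var}_{\pi_n}(A_n)=O(\mathsf{k}_n^2)$, which would swamp the leading $\mathsf{k}_n/\alpha^2$ signal. The pointwise estimate $A_n(\theta)-\mathsf{k}_n/\alpha=O(\log n)$ uniformly in $\theta\in[\underline\theta,\bar\theta]$ is what collapses the posterior variance to $O((\log n)^2)$; it relies essentially on $\operatorname{supp}(\mu)$ being compact and bounded away from the singularity $-\alpha$, as imposed by Assumption \ref{assumption}.
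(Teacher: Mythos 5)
Your argument is correct and follows essentially the same route as the paper's proof: invoke the second Bartlett identity (justified by the finite support of $\Pi_n$ and the interchange of $\partial_\alpha$ with the finite sum and the mixture integral), isolate the mixture contribution $-\partial_\alpha^2\log M_n = (\mathsf{k}_n-1)/\alpha^2 + O((\log n)^2)$ via the posterior-mean-plus-variance decomposition (the paper writes the same thing with $\mathcal{E}_{\theta\sim\mu_{n,\mathsf{k}_n}}$ and $\mathcal{V}_{\theta\sim\mu_{n,\mathsf{k}_n}}$ notation and then subtracts $\partial_\alpha^2\ell_n(\alpha;\delta_0)$), rewrite the leading part as $\sum_j\mathsf{k}_{n,j}(-\partial_\alpha^2\log\mathfrak{p}_\alpha(j))$, and conclude by combining the a.s.\ convergence $\mathsf{k}_{n,j}/\mathsf{k}_n\to\mathfrak{p}_\alpha(j)$ with Scheff\'e/bounded convergence, the $L_2$ boundedness of $n^{-\alpha}\mathsf{k}_n$ for uniform integrability, and the $O((\log n)^2/n^\alpha)\to 0$ remainder. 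The key observation that the compact support of $\mu$ collapses the posterior variance to $O((\log n)^2)$ is exactly the estimate the paper uses; the only cosmetic difference is that you keep the $\mathsf{k}_n/\alpha^2$ term inside the Sibuya-Fisher-information sum from the start rather than isolating $\partial_\alpha^2\ell_n(\alpha;\delta_0)$ as an intermediate object.
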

With $\alpha\in(0,1)$, \Cref{theorem:fisher} implies  that the Fisher Information grows sub-linearly in $n$. When the mixing distribution $\mu$ is a Dirac measure $\mu=\delta_{\theta}$, this result recovers the result in \cite{koriyama2022asymptotic}. 

The Fisher information $\mathfrak{i}(\alpha)$ of the discrete distribution with pmf $\mathfrak{p}_\alpha(j)$ will also play a central role in constructing confidence intervals for both $\alpha$ and the predictive simplex $\bm{\simp}_n$ (see \Cref{cor:ci_alpha}, \Cref{cor:CI_additive}, and \Cref{theorem:ratio_consistency}). We introduce the following closed-form expression:
\begin{proposition}[{\cite[Proposition 3.1]{koriyama2022asymptotic}}]\label{prop:fisher_info_sibuya}
  Let $\mathfrak{i}(\alpha)$ be the Fisher Information of the discrete distribution $\mathfrak{p}_\alpha(j)$ as \eqref{eq:fisher_sibuya}. Then, $\mathfrak{i}(\alpha)$ admits the following expression:
  \begin{align*}
    \forall \alpha\in (0,1), \quad 
    \mathfrak{i}(\alpha) = \frac{1}{\alpha^2} + \sum_{j=1}^\infty \mathfrak{p}_\alpha(j) \sum_{i=1}^{j-1}\frac{1}{(i-\alpha)^2} =  \frac{1}{\alpha^2} + \sum_{j=1}^\infty \frac{\mathfrak{p}_\alpha(j)}{\alpha(j-\alpha)}, 
  \end{align*}
  and the map $\alpha\mapsto \mathfrak{i}(\alpha)$ is continuous in $\alpha\in (0,1)$.
\end{proposition}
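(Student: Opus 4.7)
The plan is to compute the score and Hessian of $\log \mathfrak{p}_\alpha(j)$ directly, invoke the standard identity $\mathfrak{i}(\alpha) = -\E[\partial_\alpha^2 \log \mathfrak{p}_\alpha(J)]$ for the first equality, and then collapse the resulting tail sum to the single-sum form by exploiting a closed expression for the Sibuya survival function.

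Starting from $\log \mathfrak{p}_\alpha(j) = \log \alpha + \sum_{i=1}^{j-1}\log(i-\alpha) - \log j!$, differentiation in $\alpha$ yields
\begin{equation*}
\partial_\alpha \log \mathfrak{p}_\alpha(j) = \frac{1}{\alpha} - \sum_{i=1}^{j-1}\frac{1}{i-\alpha}, \qquad \partial_\alpha^2 \log \mathfrak{p}_\alpha(j) = -\frac{1}{\alpha^2} - \sum_{i=1}^{j-1}\frac{1}{(i-\alpha)^2}.
\end{equation*}
To pass from $\mathfrak{i}(\alpha) = \E[(\partial_\alpha \log \mathfrak{p}_\alpha(J))^2]$ to $-\E[\partial_\alpha^2 \log \mathfrak{p}_\alpha(J)]$, I would verify the standard differentiation-under-summation regularity on any closed subinterval $[a,b]\subset(0,1)$: the support $\mathbb{N}$ of $\mathfrak{p}_\alpha$ is independent of $\alpha$, and Stirling gives $\mathfrak{p}_\alpha(j) \asymp j^{-(1+\alpha)}$ uniformly in $\alpha\in[a,b]$, while $|\partial_\alpha \log \mathfrak{p}_\alpha(j)| = O(\log j)$ and $|\partial_\alpha^2 \log \mathfrak{p}_\alpha(j)| = O(1)$. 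Thus $|\partial_\alpha \mathfrak{p}_\alpha(j)|$ and $|\partial_\alpha^2 \mathfrak{p}_\alpha(j)|$ are dominated by a summable function of $j$, which justifies differentiating $\sum_j \mathfrak{p}_\alpha(j)=1$ twice under the sum and delivers the first equality.

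For the second equality, Tonelli gives $\E[\sum_{i=1}^{J-1}(i-\alpha)^{-2}] = \sum_{i=1}^\infty \PP(J>i)/(i-\alpha)^2$. The key algebraic identity is
\begin{equation*}
\frac{\PP(J>i)}{(i-\alpha)^2} \;=\; \frac{\mathfrak{p}_\alpha(i)}{\alpha\,(i-\alpha)},
\end{equation*}
which I would establish via the closed form $\PP(J>i) = \Gamma(i+1-\alpha)/[\Gamma(1-\alpha)\,\Gamma(i+1)]$. This tail formula follows by induction on $i$ (base case $\PP(J>0)=1$) using the recursion $\mathfrak{p}_\alpha(j+1) = \mathfrak{p}_\alpha(j)(j-\alpha)/(j+1)$ and $\PP(J>i+1) = \PP(J>i) - \mathfrak{p}_\alpha(i+1)$. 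Combined with $\mathfrak{p}_\alpha(i) = \alpha\,\Gamma(i-\alpha)/[\Gamma(1-\alpha)\,\Gamma(i+1)]$ and the functional equation $\Gamma(i+1-\alpha) = (i-\alpha)\,\Gamma(i-\alpha)$, it yields the displayed term-by-term identity and hence the second form of $\mathfrak{i}(\alpha)$.

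Finally, continuity of $\alpha \mapsto \mathfrak{i}(\alpha)$ on $(0,1)$ follows from the second expression: each summand $\mathfrak{p}_\alpha(j)/[\alpha(j-\alpha)]$ is continuous in $\alpha$, and on any closed $[a,b]\subset(0,1)$ it is bounded uniformly by a constant times $j^{-(2+a)}$ via Stirling, so the Weierstrass M-test gives uniform convergence and hence continuity of the limit. The main technical obstacle is the domination argument needed to license the Fisher information identity; the survival-function computation itself is the clean algebraic heart of the proof.
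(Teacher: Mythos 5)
Your proof is correct. Note that the paper cites this statement from \cite[Proposition 3.1]{koriyama2022asymptotic} without reproving it, so there is no in-paper proof to compare against; what you give is a clean self-contained derivation. The two steps are both sound: the identity $\mathfrak{i}(\alpha) = -\E[\partial_\alpha^2 \log \mathfrak{p}_\alpha(J)]$ is justified by your domination argument (on $[a,b]\subset(0,1)$ one has $\mathfrak{p}_\alpha(j)\asymp j^{-(1+\alpha)}$, $\partial_\alpha\log\mathfrak{p}_\alpha(j)=O(\log j)$, $\partial_\alpha^2\log\mathfrak{p}_\alpha(j)=O(1)$, so $|\partial_\alpha\mathfrak{p}_\alpha(j)|$ and $|\partial_\alpha^2\mathfrak{p}_\alpha(j)|$ are summably dominated, licensing differentiation of $\sum_j\mathfrak{p}_\alpha(j)=1$ twice), and the collapse to the single sum via Tonelli and the Sibuya survival function $\PP(J>i)=\Gamma(i+1-\alpha)/[\Gamma(1-\alpha)\Gamma(i+1)]$ checks out term by term, since $\PP(J>i)/(i-\alpha)^2 = \Gamma(i-\alpha)/[\Gamma(1-\alpha)\Gamma(i+1)(i-\alpha)] = \mathfrak{p}_\alpha(i)/[\alpha(i-\alpha)]$. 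The Weierstrass argument for continuity is also fine. One minor alternative worth knowing: the second equality can be obtained without closed-form gamma identities by a direct Abel/summation-by-parts manipulation of $\sum_{i\ge 1}\PP(J>i)/(i-\alpha)^2$ using the one-step recursion $\mathfrak{p}_\alpha(j+1)=\mathfrak{p}_\alpha(j)(j-\alpha)/(j+1)$, but your gamma-function route is equally rigorous and arguably cleaner.
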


\section{Estimation of discount parameter $\alpha$}\label{sec:parameter_estimate}
In this section, we consider the estimation of the discount parameter $\alpha\in (0,1)$ in the Gibbs partition model under \Cref{assumption}, where the mixing distribution $\mu$ is unknown. We introduce the \emph{Quasi-Maximum Likelihood Estimator} (QMLE).

\begin{definition}[Quasi-Maximum Likelihood Estimator]\label{def:qmle}
The QMLE, denoted by $\hat{\alpha}_n$, is defined as
\[
\hat{\alpha}_n \in \underset{\alpha \in (0,1)}{\operatorname{argsup}} \ \ell_n(\alpha; \delta_0) = \underset{\alpha \in (0,1)}{\operatorname{argsup}} \ \left\{ (\mathsf{k}_n - 1)\log \alpha + \sum_{j=2}^{n} \mathsf{k}_{n,j} \sum_{i=1}^{j-1} \log(i - \alpha) \right\},
\]
where $\ell_n(\alpha; \delta_0)$ is the log-likelihood \eqref{eq:loglikelihood} with the mixing distribution $\mu$ set to the Dirac measure at 0.
\end{definition}

The QMLE corresponds to the MLE under a simplified Gibbs model with $\mu = \delta_0$. Notably, its computation depends only on the block size statistics $(\mathsf{k}_{n,j})_{j=1}^n$, which, as previously noted, serve as the sufficient statistics for the Gibbs models. 
Following algebraic simplifications (see, e.g., \cite{favaro2021near, koriyama2022asymptotic, carlton1999applications}), the QMLE $\hat{\alpha}_n$ is characterized as:
\begin{align}\label{eq:qmle_cases}
\hat{\alpha}_n =
\begin{cases}
0 & \text{if } \mathsf{k}_n = 1, \\
1 & \text{if } \mathsf{k}_n = n, \\
\text{the unique solution } \alpha \in (0,1) \text{ to } \partial_\alpha \ell_n(\alpha; \delta_0) = 0 & \text{if } 1 < \mathsf{k}_n < n,
\end{cases}
\end{align}
where the stationary condition $\partial_\alpha \ell_n(\alpha; \delta_0) = 0$ is explicitly given by:
\begin{align}\label{eq:stationary_condition}
0 = \partial_\alpha \ell_n(\alpha; \delta_0) = \frac{\mathsf{k}_n - 1}{\alpha} - \sum_{j=2}^{n} \mathsf{k}_{n,j} \sum_{i=1}^{j-1} \frac{1}{i - \alpha}, \quad \alpha \in (0,1).
\end{align}
The following proposition shows that the boundary cases $\mathsf{k}_n \in \{1, n\}$ occur with exponentially small probability, ensuring that the QMLE typically lies in the open interval $(0,1)$ satisfies the stationary condition. 

\begin{proposition}\label{prop:mle_existence}
Under \Cref{assumption}, it holds that 
\begin{align*}
\PP(\mathsf{k}_n = 1) &\lesssim n^{-(\alpha + \underline\theta)} = o(1), \\
\PP(\mathsf{k}_n = n) &\lesssim n^{\bar{\theta}(\alpha^{-1} - 1)} \alpha^n = o(1)
\end{align*}
\end{proposition}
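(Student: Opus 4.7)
The plan is to derive both bounds directly from the closed-form marginal likelihood \eqref{eq:intro_marginal_likelihood}, combined with the integral representation of $v_{n,k}$ given in \Cref{assumption}, and then apply Stirling's asymptotics to gamma-function ratios. The two boundary events have particularly simple probabilities because the partition shape is completely determined: $\{\mathsf{k}_n=1\}$ corresponds to the unique all-in-one partition with $|U_1|=n$, and $\{\mathsf{k}_n=n\}$ corresponds to the unique all-singletons partition. Consequently \eqref{eq:intro_marginal_likelihood} gives
\begin{align*}
\PP(\mathsf{k}_n=1) = v_{n,1}\prod_{j=1}^{n-1}(j-\alpha),
\qquad
\PP(\mathsf{k}_n=n) = v_{n,n}.
\end{align*}

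Next I would substitute the mixture representation from \Cref{assumption} and rewrite both expressions as gamma-function ratios. Using $\prod_{i=1}^{n-1}(\theta+i)=\Gamma(n+\theta)/\Gamma(\theta+1)$ and $\prod_{j=1}^{n-1}(j-\alpha)=\Gamma(n-\alpha)/\Gamma(1-\alpha)$, I obtain
\begin{align*}
\PP(\mathsf{k}_n=1) = \int \dd\mu(\theta)\,\frac{\Gamma(\theta+1)\,\Gamma(n-\alpha)}{\Gamma(1-\alpha)\,\Gamma(n+\theta)}.
\end{align*}
Similarly, rewriting $\prod_{i=1}^{n-1}(\theta+i\alpha)=\alpha^{n-1}\Gamma(n+\theta/\alpha)/\Gamma(1+\theta/\alpha)$ gives
\begin{align*}
\PP(\mathsf{k}_n=n) = \alpha^{n-1}\int \dd\mu(\theta)\,\frac{\Gamma(1+\theta)\,\Gamma(n+\theta/\alpha)}{\Gamma(1+\theta/\alpha)\,\Gamma(n+\theta)}.
\end{align*}

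I would then apply the standard Stirling asymptotic $\Gamma(n+a)/\Gamma(n+b)\sim n^{a-b}$ as $n\to\infty$, valid uniformly in $a,b$ ranging over any compact set. This yields $\Gamma(n-\alpha)/\Gamma(n+\theta)\sim n^{-(\alpha+\theta)}$ and $\Gamma(n+\theta/\alpha)/\Gamma(n+\theta)\sim n^{\theta(1-\alpha)/\alpha}$. Since $\mathrm{supp}(\mu)\subset[\underline\theta,\bar\theta]$ is compact, I can replace the integrand by its uniform upper bound over $\theta\in[\underline\theta,\bar\theta]$: for the first integral, the exponent $-(\alpha+\theta)$ is maximized at $\theta=\underline\theta$, giving a bound of order $n^{-(\alpha+\underline\theta)}$; for the second, the exponent $\theta(1-\alpha)/\alpha$ is maximized at $\theta=\bar\theta\ge 0$, giving a bound of order $n^{\bar\theta(1-\alpha)/\alpha}\alpha^{n-1}$. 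The prefactors $\Gamma(\theta+1)/\Gamma(1-\alpha)$ and $\Gamma(1+\theta)/\Gamma(1+\theta/\alpha)$ are continuous in $\theta$ on $[\underline\theta,\bar\theta]$ (note $\underline\theta>-\alpha>-1$ ensures $\Gamma(\theta+1)$ is finite and positive), hence uniformly bounded, and absorb into the implied constant. Finally, $\alpha+\underline\theta>0$ by \Cref{assumption} delivers the $o(1)$ conclusion in the first case, while $\alpha\in(0,1)$ makes $\alpha^{n-1}$ exponentially small, dominating any polynomial factor, in the second.

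I do not anticipate a substantive obstacle: the proof is essentially a bookkeeping calculation combining the exact marginal likelihood at boundary configurations, gamma-function arithmetic, and uniform Stirling asymptotics over the compact $\theta$-support. The only point requiring mild care is ensuring the Stirling approximation can be made uniform in $\theta\in[\underline\theta,\bar\theta]$ and that the ratio $\Gamma(1+\theta)/\Gamma(1+\theta/\alpha)$ stays bounded, which follows from continuity on the compact support together with $\underline\theta>-\alpha$ (so both $1+\theta$ and $1+\theta/\alpha$ stay bounded away from the nonpositive integers).
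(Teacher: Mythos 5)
Your proof is correct; it arrives at the same gamma-function bounds as the paper but via a more direct route. The paper decomposes the boundary event through the sequential Markov structure of the partition process, writing
$\PP(\mathsf{k}_n=1)=\prod_{m=1}^{n-1}\int\dd\mu_{m,1}(\theta)\tfrac{m-\alpha}{\theta+m}$ and
$\PP(\mathsf{k}_n=n)=\prod_{m=1}^{n-1}\int\dd\mu_{m,m}(\theta)\tfrac{\theta+\alpha m}{\theta+m}$
as products of one-step conditional probabilities, each an integral against the tilted measure $\mu_{m,k}$; it then bounds each factor by noting $\theta\mapsto\tfrac{m-\alpha}{\theta+m}$ is decreasing and $\theta\mapsto\tfrac{\theta+\alpha m}{\theta+m}$ is increasing on $[\underline\theta,\bar\theta]$, and telescopes the resulting product into gamma ratios. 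You instead evaluate the closed-form marginal likelihood \eqref{eq:intro_marginal_likelihood} at the two boundary configurations, obtaining a single integral against the original mixing measure $\mu$; this sidesteps the tilted-measure machinery and the sequential telescoping entirely, and the monotonicity-in-$\theta$ argument is replaced by pulling out bounded prefactors and maximizing the Stirling exponent over the compact support. The two representations are algebraically identical (the paper's product of integrals against $\mu_{m,k}$ collapses to your single integral against $\mu$, since $\prod_m v_{m+1,k}/v_{m,k}$ telescopes), and both yield the same exponent arithmetic. Your route is arguably cleaner for this particular proposition, at the cost of not rehearsing the sequential/tilted-measure representation that the paper reuses heavily elsewhere (e.g.\ in the proofs of \Cref{theorem:Lp_converge} and \Cref{lm:martingale_diff}). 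One minor point you could make explicit: $\underline\theta > -\alpha$ together with $\alpha\in(0,1)$ gives $1+\theta/\alpha > 0$ for all $\theta$ in the support, so the denominator $\Gamma(1+\theta/\alpha)$ stays bounded away from zero and infinity on $[\underline\theta,\bar\theta]$; you flag this, which is the only place where the domain condition is load-bearing.
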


Combining \eqref{eq:qmle_cases} and \Cref{prop:mle_existence}, we conclude that with high probability, the QMLE $\hat{\alpha}_n$ lies in the open interval $(0,1)$ and is the unique solution to the equation \eqref{eq:stationary_condition}.\\

We now turn to the asymptotic distribution of the QMLE $\hat{\alpha}_n$, and show that it is \emph{asymptotically mixed normal}. To formulate this result rigorously, we introduce a notion of stochastic convergence known as \emph{stable convergence}. We begin by defining this concept in a general setting.

\begin{definition}[{\cite[Definition 3.15]{hausler2015stable}}]\label{def:stable_conv}
  Let $(\Omega, \mathcal{F}, P)$ be a probability space, and let $\mathcal{X}$ be a separable metrizable topological space equipped with its Borel $\sigma$-field $\mathcal{B}(\mathcal{X})$. 
For a sub-$\sigma$-field $\mathcal{G} \subset \mathcal{F}$, a sequence of $(\mathcal{X}, \mathcal{B}(\mathcal{X}))$-valued random variables $(X_n)_{n\geq 1}$ is said to converge $\mathcal{G}\operatorname{-stably}$ to $X$, denoted by $X_n \rightarrow X$ $\mathcal{G}\operatorname{-stably}$, if 
 \begin{align}\label{eq:stable_convergence_rv}
    \lim_{n\rightarrow\infty} \E\Bigl[f\E\bigl[h(X_n)\mid\mathcal{G}\bigr]\Bigr]
     = \E\Bigl[f\E\bigl[h(X)\mid\mathcal{G}\bigr]\Bigr]
\end{align}
for any $\mathcal{F}$-measurable and integrable function $f$, and any bounded continuous function $h$ on $\mathcal{X}$. 
\end{definition}
Taking $f \equiv 1$ in \eqref{eq:stable_convergence_rv}, we see that the stable convergence is stronger than the usual weak convergence. When $\mathcal{G}$ is the trivial $\sigma$-field $\{\emptyset, \Omega\}$, the $\mathcal{G}$-stable convergence reduces to the weak convergence. Indeed, in this case we have 
$\E[f\E[h(X_n)|\mathcal{G}]] = (\int f \diff P) \cdot \E[h(X_n)]$ so that \eqref{eq:stable_convergence_rv} becomes $\E[h(X_n)]\to \E[h(X)]$ for any bounded continuous function $h$, which is the definition of weak convergence. 


\cite[Theorem 3.18]{hausler2015stable} extends several known results for weak convergence to the setting of stable convergence. Below, we present the continuous mapping theorem and Slutsky’s lemma for stable convergence, both of which will be used throughout this paper.

\begin{lemma}[{\cite[Theorem 3.18]{hausler2015stable}}]\label{lm:CS_stable}
Let $(\mathcal{X}, \mathcal{B}(\mathcal{X}))$ and $(\mathcal{Y}, \mathcal{B}(\mathcal{Y}))$ be separable metrizable spaces with a metric $d$. Suppose $(X_n)_{n\geq 1}$ is a sequence of $(\mathcal{X}, \mathcal{B}(\mathcal{X}))$-valued random variables satisfying $X_n \rightarrow X$ $\mathcal{G}\operatorname{-stably}$. Then, the following hold:
\begin{enumerate}
    \item If $\mathcal{X} = \mathcal{Y}$ and $d (X_n, Y_n) \rightarrow 0$ in probability, then $Y_n \rightarrow X$ $\mathcal{G}\operatorname{-stably}$.
    \item If $Y_n \to Y$ in probability and $Y$ is $\mathcal{G}$-measurable, then $(X_n, Y_n) \rightarrow (X, Y)$ $\mathcal{G}\operatorname{-stably}$.
    \item  If $g: \mathcal{X} \rightarrow \mathcal{Y}$ is $(\mathcal{B}(\mathcal{X}), \mathcal{B}(\mathcal{Y}))$-measurable and continuous $P^{X}$-a.s., then $g(X_n) \rightarrow g(X)$ $\mathcal{G}\operatorname{-stably}$.
\end{enumerate}
\end{lemma}
It is important to emphasize that the second statement allows the random variable $Y$ to be $\mathcal{G}$-measurable, rather than merely a deterministic constant. This highlights that Slutsky’s lemma holds in a stronger form under stable convergence. \\

We are now ready to state the limit distribution of the QMLE. In what follows, we take the sub $\sigma$-filed $\mathcal{G}$ to the limiting $\sigma$-field $\mathcal{F}_\infty = \sigma(\cup_{n=1}^\infty \mathcal{F}_n)$, where $\mathcal{F}_n=\sigma(\Pi_n)$ is the $\sigma$-field generated by the random partition $\Pi_n\in \mathcal{P}_n$ of $[n]$. With this notation, we will show that the QMLE $\hat{\alpha}_n$, scaled by $n^{\alpha/2}$, converges $\mathcal{F}_\infty$-stably to a variance mixture of centered normal distributions.

\begin{theorem}[Asymptotic Mixed Normality]\label{thm:mle_clt}
  Let $\mathcal{F}_\infty = \sigma(\cup_{n=1}^\infty \mathcal{F}_n)$ with $\mathcal{F}_n=\sigma(\Pi_n)$. 
  Then, we have 
  $$
  \sqrt{n^\alpha \mathfrak{i}(\alpha)} \cdot (\hat{\alpha}_n-\alpha) \to  N/\sqrt{\diversity} \quad \stable
  $$
  where $N\sim \mathcal{N}(0,1)$ is independent of $\mathcal{F}_\infty$, and $\diversity=\lim_{n\to+\infty} n^{-\alpha} \mathsf{k}_n$ is the $\alpha$-diversity, which is $\mathcal{F}_\infty$-measurable. 
\end{theorem}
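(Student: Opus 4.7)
The plan is the classical M-estimator argument for the QMLE: Taylor-expand the quasi-score around the true $\alpha$, invert, and combine an observed-information LLN with an $\mathcal{F}_\infty$-stable CLT for the score. By \Cref{prop:mle_existence} and the consistency of $\hat\alpha_n$ (which follows from \cite{favaro2021bayesian}), with probability tending to one $\hat\alpha_n\in(0,1)$ satisfies the stationary condition \eqref{eq:stationary_condition}, so a second-order Taylor expansion gives
\[
  \sqrt{n^\alpha\,\mathfrak{i}(\alpha)}\,(\hat\alpha_n-\alpha) \;=\; \sqrt{\mathfrak{i}(\alpha)}\,\cdot\, \frac{n^{-\alpha/2}\,\partial_\alpha\ell_n(\alpha;\delta_0)}{-\,n^{-\alpha}\,\partial_\alpha^2\ell_n(\tilde\alpha_n;\delta_0)}
\]
for some $\tilde\alpha_n$ between $\alpha$ and $\hat\alpha_n$. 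Once the denominator converges in probability to the $\mathcal{F}_\infty$-measurable limit $\diversity\,\mathfrak{i}(\alpha)$ and the numerator converges $\mathcal{F}_\infty$-stably to $\sqrt{\diversity\,\mathfrak{i}(\alpha)}\cdot N$ with $N\sim\mathcal{N}(0,1)$ independent of $\mathcal{F}_\infty$, the stable Slutsky lemma (\Cref{th:CS_stable}) yields the conclusion $N/\sqrt{\diversity}$.

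For the denominator, direct differentiation gives $-\partial_\alpha^2\ell_n(\alpha;\delta_0) = (\mathsf{k}_n-1)/\alpha^2 + \sum_{j=1}^n \mathsf{k}_{n,j}\sum_{i=1}^{j-1}1/(i-\alpha)^2$. Dividing by $n^\alpha$ and combining $n^{-\alpha}\mathsf{k}_n\to\diversity$ a.s.\ (\Cref{theorem:Lp_converge}) with $\mathsf{k}_{n,j}/\mathsf{k}_n\to\mathfrak{p}_\alpha(j)$ a.s.\ from \eqref{eq:intro_powerlaw}, then invoking the closed-form expression for $\mathfrak{i}(\alpha)$ in \Cref{prop:fisher_info_sibuya} to justify interchanging the limit and the infinite $j$-sum, I would obtain $-n^{-\alpha}\partial_\alpha^2\ell_n(\alpha;\delta_0)\to\diversity\,\mathfrak{i}(\alpha)$ almost surely; this is essentially the computation underlying \Cref{theorem:fisher}. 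Continuity of $\alpha\mapsto\mathfrak{i}(\alpha)$ and consistency of $\tilde\alpha_n$ let me replace $\alpha$ by $\tilde\alpha_n$ inside.

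The heart of the proof is the numerator. I would decompose the quasi-score as $\partial_\alpha\ell_n(\alpha;\delta_0) = \partial_\alpha\ell_n(\alpha;\mu) + R_n$, with $R_n := \partial_\alpha\log v_{n,\mathsf{k}_n}(\alpha,\delta_0) - \partial_\alpha\log v_{n,\mathsf{k}_n}(\alpha,\mu)$. The explicit expression $\partial_\alpha\log v_{n,k}(\alpha,\theta) = \sum_{i=1}^{k-1} i/(\theta+i\alpha)$, the partial-fraction identity $i/(\theta+i\alpha)=1/\alpha-\theta/[\alpha(\theta+i\alpha)]$, and the compactness of $\operatorname{supp}(\mu)$ give $|R_n|\lesssim \log\mathsf{k}_n = O(\log n) = o(n^{\alpha/2})$, so $R_n$ is asymptotically negligible. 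The main term $\partial_\alpha\ell_n(\alpha;\mu)$ is a $\PP_\mu$-martingale — the score of the correctly specified model — with increments $\Delta_m := \partial_\alpha\log\PP_\mu(\text{step }m{+}1\mid\mathcal{F}_m)$ satisfying $|\Delta_m|\lesssim \log m$. I would then apply a martingale CLT for stable convergence (e.g.\ \cite[Ch.~3]{hausler2015stable}), whose two hypotheses are (i) the predictable quadratic variation $n^{-\alpha}\sum_{m\le n-1}\E[\Delta_m^2\mid\mathcal{F}_m]\to\diversity\,\mathfrak{i}(\alpha)$ almost surely, established by the same block-size ergodicity used in \Cref{theorem:fisher}, and (ii) a Lindeberg-type negligibility of the maximal increment, immediate from the $\log m$ bound.

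The main obstacle will be verifying hypothesis (i) in the martingale CLT: identifying the predictable-variance limit as the \emph{random} object $\diversity\,\mathfrak{i}(\alpha)$ rather than a deterministic constant requires a delicate interchange of limits in the double sum, and it is precisely this randomness of the limit that produces the asymptotic mixed (rather than purely) normal distribution. Once (i) and (ii) are in place, assembling numerator and denominator through \Cref{th:CS_stable} gives the stated $\mathcal{F}_\infty$-stable limit $N/\sqrt{\diversity}$.
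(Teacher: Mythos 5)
Your proposal follows essentially the same route as the paper: Taylor-expand the stationary condition, show the observed information (normalized second derivative) converges to the right limit, decompose the quasi-score as the correctly specified $\mu$-martingale score plus a remainder that is $O(\log n)=o(n^{\alpha/2})$ (your $R_n$ is exactly the paper's $\int\dd\mu_{n,\mathsf{k}_n}(\theta)\sum_{i=1}^{\mathsf{k}_n-1}\frac{\theta}{\alpha(\theta+i\alpha)}$), and apply the stable martingale CLT with the random predictable-variance limit $\diversity\,\mathfrak{i}(\alpha)$, then finish by the stable Slutsky lemma. The only differences are cosmetic (the paper normalizes the score by $\mathsf{k}_n$ via $\Psi_n$, you normalize by $n^\alpha$) or small overestimates (the martingale increments are in fact uniformly bounded, not just $O(\log m)$, since $1/(j-\alpha)\le 1/(1-\alpha)$ and the residuals vanish), neither of which affects the argument.
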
 

\Cref{thm:mle_clt} was established for the Ewens--Pitman model by \cite{koriyama2022asymptotic}. Here we extend the result to the general Gibbs partition under \Cref{assumption}. 
\Cref{thm:mle_clt} implies that the QMLE converges at rate $n^{-\alpha/2}$, which is slower than the standard iid parametric rate due to the fact that $\alpha\in(0,1)$. More significantly, the limiting distribution is not a fixed normal but a variance mixture of centered normals, where the randomness in the variance arises from the $\alpha$-diversity $\diversity$. 

In conjunction with the asymptotic behavior of the Fisher Information in \Cref{theorem:fisher}, we obtain:
$$
\underbrace{\sqrt{\E\Bigl[\bigl(\partial_\alpha \ell_n(\alpha; \mu)\bigr)^2\Bigr]}}_{\text{Fisher Information}} \cdot (\hat\alpha_n -\alpha) \to  \underbrace{N\cdot \sqrt{\frac{\E[\diversity]}{\diversity}}}_{\text{variance $\ge 1$}} \quad \stable. 
$$
By Jensen's inequality and the independence of $N\sim \mathcal{N}(0,1)$ and $\diversity$, it follows that the variance of the limit distribution exceeds $1$. This contrasts with the standard parametric iid setting, where the MLE achieves the Cramér–-Rao lower bound. 

As a direct consequence of  \Cref{thm:mle_clt}, we obtain an asymptotic confidence interval for $\alpha$ using the almost sure convergence  $n^{-\alpha} \mathsf{k}_n \to \diversity$ and 
Slutsky's lemma for stable convergence (see
\Cref{lm:CS_stable}-(2)). 
\begin{corollary}[Confidence interval of $\alpha$]\label{cor:ci_alpha}
  Let $\mathsf{k}_n$ be the number of non-empty blocks and $\mathfrak{i}(\alpha)$ be the Fisher Information of the discrete distribution $\mathfrak{p}_\alpha(j)$ (see \Cref{prop:fisher_info_sibuya}). Then, we have 
  \begin{align*}
    \sqrt{\mathsf{k}_n \mathfrak{i}(\alpha)} (\hat{\alpha}_n-\alpha) \to N \quad  \stable
  \end{align*}
  where $N\sim \mathcal{N}(0,1)$ is independent of $\mathcal{F}_\infty$. Consequently, for any $\epsilon\in (0,1)$, letting $\tau_{1-\epsilon/2}$ be the $(1-\epsilon/2)$-quantile of $\mathcal{N}(0,1)$, we have 
  $$
  \lim_{n\to+\infty} \PP\left(\alpha \in \Bigl[\hat\alpha_n \pm \frac{\tau_{1-\epsilon/2}}{\sqrt{\mathsf{k}_n  \mathfrak{i}(\hat{\alpha}_n) }}\Bigr]\right) = 1-\epsilon. 
  $$
\end{corollary}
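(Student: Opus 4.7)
The plan is to deduce both assertions from Theorem~\ref{thm:mle_clt} by combining it with the almost-sure convergence $n^{-\alpha}\mathsf{k}_n \to \diversity$ of Theorem~\ref{theorem:Lp_converge} via Slutsky's lemma for stable convergence (Lemma~\ref{th:CS_stable}). The algebraic trick is the decomposition
\begin{align*}
\sqrt{\mathsf{k}_n\, \mathfrak{i}(\alpha)}\,(\hat{\alpha}_n - \alpha)
\;=\; \sqrt{n^{-\alpha}\mathsf{k}_n}\;\cdot\;\sqrt{n^\alpha \mathfrak{i}(\alpha)}\,(\hat{\alpha}_n - \alpha),
\end{align*}
which isolates the random normalizing factor $\sqrt{n^{-\alpha}\mathsf{k}_n}$ whose a.s.\ limit will cancel the $1/\sqrt{\diversity}$ appearing in Theorem~\ref{thm:mle_clt}.

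By Theorem~\ref{thm:mle_clt} the second factor converges $\stable$ to $N/\sqrt{\diversity}$. The first factor is $\mathcal{F}_n$-measurable and converges almost surely (hence in probability) to $\sqrt{\diversity}$, which is $\mathcal{F}_\infty$-measurable and strictly positive almost surely. Applying the second bullet of Lemma~\ref{th:CS_stable} promotes this to joint $\stable$ convergence of the pair, and the continuous mapping statement (third bullet), applied to the multiplication map $(x,y)\mapsto xy$, yields
\begin{align*}
\sqrt{\mathsf{k}_n\, \mathfrak{i}(\alpha)}\,(\hat{\alpha}_n - \alpha) \;\to\; \sqrt{\diversity}\cdot \frac{N}{\sqrt{\diversity}} \;=\; N \quad \stable,
\end{align*}
establishing the first claim.

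To pass from $\mathfrak{i}(\alpha)$ to the plug-in $\mathfrak{i}(\hat\alpha_n)$ required by the confidence interval, note first that Theorem~\ref{thm:mle_clt} together with $n^{-\alpha/2}\to 0$ gives consistency $\hat\alpha_n \to \alpha$ in probability. Combined with the continuity of $\alpha\mapsto\mathfrak{i}(\alpha)$ from Proposition~\ref{prop:fisher_info_sibuya} and the ordinary continuous mapping theorem, this yields $\mathfrak{i}(\hat\alpha_n)/\mathfrak{i}(\alpha)\to 1$ in probability. One further application of Slutsky's lemma for stable convergence upgrades the previous display to $\sqrt{\mathsf{k}_n\,\mathfrak{i}(\hat\alpha_n)}(\hat\alpha_n -\alpha) \to N$ $\stable$. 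Since stable convergence implies ordinary weak convergence, rearranging the event $\{|\sqrt{\mathsf{k}_n\,\mathfrak{i}(\hat\alpha_n)}(\hat\alpha_n-\alpha)| \le \tau_{1-\epsilon/2}\}$ and invoking continuity of the standard normal CDF delivers the stated coverage.

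There is no deep obstacle here beyond bookkeeping. The only subtle point is that the conclusion relies crucially on the strengthened Slutsky statement in Lemma~\ref{th:CS_stable}, which allows the auxiliary sequence $Y_n$ to converge to an $\mathcal{F}_\infty$-measurable (rather than deterministic) limit; this is precisely what permits the random factor $\sqrt{\diversity}$ to cancel, and is a genuine feature of stable convergence not available from plain weak convergence.
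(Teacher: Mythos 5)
Your proof is correct and matches the approach the paper intends (the paper treats the corollary as immediate, naming Theorem~\ref{thm:mle_clt}, the a.s.\ convergence $n^{-\alpha}\mathsf{k}_n\to\diversity$, and Lemma~\ref{th:CS_stable} as the ingredients, which is exactly what you use). The decomposition $\sqrt{\mathsf{k}_n\mathfrak{i}(\alpha)}(\hat\alpha_n-\alpha)=\sqrt{n^{-\alpha}\mathsf{k}_n}\cdot\sqrt{n^\alpha\mathfrak{i}(\alpha)}(\hat\alpha_n-\alpha)$, the appeal to the second and third bullets of Lemma~\ref{th:CS_stable} with the $\mathcal{F}_\infty$-measurable limit $\sqrt{\diversity}$, and the plug-in step via consistency and continuity of $\mathfrak{i}$ are all as intended; the only cosmetic deviation is that you rederive consistency of $\hat\alpha_n$ from Theorem~\ref{thm:mle_clt} rather than citing \Cref{cor:consistency_and_uniform_conv}, and the remark that the first factor is $\mathcal{F}_n$-measurable is true but not needed (only $\mathcal{F}_\infty$-measurability of the limit matters for Lemma~\ref{th:CS_stable}).
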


\section{Estimation of probability simplex $\simp_n$}\label{sec:prediction}
In this section, we study the estimation of the probability simplex $\bm{\simp}_n$, defined as:
\begin{align}\label{eq:def_simplex_true}
  \bm\simp_n = (\simp_{n,0}, \dots, \simp_{n, \mathsf{k}_n})  \equiv \Bigl(\frac{v_{n+1, \mathsf{k}_{n}+1}}{v_{n, \mathsf{k}_n}} , 
  \frac{v_{n+1, \mathsf{k}_n}}{v_{n,\mathsf{k}_n}}(|U_1|-\alpha), \cdots,   \frac{v_{n+1, \mathsf{k}_n}}{v_{n,\mathsf{k}_n}}(|U_{\mathsf{k}_n}|-\alpha)
  \Bigr),
\end{align}
\editline{and we denote $\bm\simp_n(I)=\sum_{i\in I} \simp_{n,i}$ for all $I\subset \{0, 1, \dots \mathsf{k}_n\}$.} 
Recall from \eqref{eq:rule} that, under the Gibbs partition model, given a partition $(U_1, U_2, \dots, U_{\mathsf{k}_n})$ of $[n]$, the next $(n+1)$-th item is assigned according to 
\begin{align*}
  (n+1) \in \begin{cases}
    \text{a new block} & \text{with probability } \simp_{n,0} \\
    \text{$U_i$} & \text{with probability } \simp_{n,i}, \quad i \in \{1, 2, \dots \mathsf{k}_n\}
  \end{cases}.
\end{align*}
We aim to estimate this
random simplex $\bm{\simp}_n$. Define the estimator $\bm{\hat\simp}_n$ as 
\begin{align}\label{eq:def_simplex_estimator}
\bm{\hat\simp}_n = (\hat\simp_{n,0}, \hat\simp_{n,1}, \dots, \hat\simp_{n,\mathsf{k}_n}) \equiv \Bigl(
  \frac{\mathsf{k}_n}{n}\hat{\alpha}_n,  
\frac{|U_1|-\hat{\alpha}_n}{n}, \dots, \frac{|U_{\mathsf{k}_n}|-\hat{\alpha}_n}{n}
  \Bigr), 
\end{align}
where $\hat{\alpha}_n$ is the QMLE. This estimator is derived from the true simplex $\bm{\simp}_{n}$ by taking \( \mu = \delta_0 \) and replacing \( \alpha \) with \( \hat{\alpha}_n \). 

\begin{remark}\label{rem:comparison}
\editline{
  One may also consider alternative estimators. For example, setting $\alpha=0$ yields the simple frequency estimator
$$
\bm{\tilde{\simp}}_n \equiv \Bigl(
0, \frac{|U_1|}{n}, \dots, \frac{|U_{\mathsf{k}_n}|}{n}
\Bigr).
$$
Another possibility is to keep $\alpha=\hat{\alpha}_n$ while setting $\mu=\delta_\theta$ for some $\theta \neq 0$. However, the former estimator is suboptimal in terms of convergence rate, whereas the latter has exactly the same limit distribution as our estimator $\bm{\hat{\simp}}_n$ (see \Cref{sec:comparison}). We therefore focus on $\bm{\hat{\simp}}_n$.
}  
\end{remark}

We investigate the asymptotic behavior of the $f$-divergence between \( \bm{\hat{\simp}}_n \) and \( \bm{\simp}_n \):
\begin{align*}
  \mathsf{D}_f(\bm{\hat{\simp}}_n \| \bm{\simp}_n) \equiv \sum_{i=0}^{\mathsf{k}_n} \simp_{n,i} \cdot f\left(\frac{\hat{\simp}_{n,i}}{\simp_{n,i}}\right),
\end{align*}
where \( f: [0, \infty) \to \mathbb{R} \) is a convex function with \( f(1) = 0 \). Notable examples include:
\begin{itemize}
  \item KL divergence: \( f(x) = x \log x \), giving
  \[ \mathsf{KL}(\bm{\hat{\simp}}_n \| \bm{\simp}_n) = \sum_{i=0}^{\mathsf{k}_n} \hat{\simp}_{n,i} \cdot \log\left(\frac{\hat{\simp}_{n,i}}{\simp_{n,i}}\right). \]
  \item Total Variation (TV) distance: \( f(x) = |x - 1| / 2 \), yielding
  \[ \mathsf{TV}(\bm{\hat{\simp}}_n, \bm{\simp}_n) = \frac{1}{2} \sum_{i=0}^{\mathsf{k}_n} |\hat{\simp}_{n,i} - \simp_{n,i}| = \sup_{I \subset \{0, 1, \dots, \mathsf{k}_n\}} \left| \bm{\hat{\simp}}_n(I)-\bm{\simp}_n(I) \right|. \]
\end{itemize}
We first consider the case where $f$ is locally twice differentiable and its second derivative is Hölder continuous at $1$. That is, there exists $\delta > 0, C > 0$, and $\beta > 0$ such that $f$ is twice differentiable on $(1 - \delta, 1 + \delta)$, and
$|f''(1 + x) - f''(1)| \le C |x|^\beta$ for all $|x| < \delta$. 
\begin{theorem}\label{thm:f_div}
  Suppose $f$ is locally twice differentiable and its second derivative is Hölder continuous at $1$. Then, we have 
  \begin{align*}
     n\cdot \mathsf{D}_f(\bm{\hat\simp}_n|| \bm{\simp}_n ) \to \frac{\alpha f''(1)}{2} |N|^2 \quad \stable
  \end{align*}
  where $N\sim \mathcal{N}(0,1)\indep \mathcal{F}_\infty$. 
\end{theorem}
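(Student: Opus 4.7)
My plan is to carry out a second-order Taylor expansion of $f$ about $1$, reducing $n\mathsf{D}_f(\bm{\hat\simp}_n \| \bm\simp_n)$ to $\tfrac{f''(1)}{2}\,n\chi^2_n$, where $\chi^2_n \equiv \sum_{i=0}^{\mathsf{k}_n}(\hat\simp_{n,i}-\simp_{n,i})^2/\simp_{n,i}$ is the Pearson $\chi^2$-divergence, and then to evaluate the limit of $n\chi^2_n$. Writing $r_{n,i} \equiv \hat\simp_{n,i}/\simp_{n,i}$, Hölder continuity of $f''$ at $1$ together with $f(1)=0$ gives $f(r_{n,i}) = f'(1)(r_{n,i}-1) + \tfrac{1}{2}f''(1)(r_{n,i}-1)^2 + O(|r_{n,i}-1|^{2+\beta})$ on the event $\{\max_i |r_{n,i}-1|<\delta\}$. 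The linear term vanishes after weighting by $\simp_{n,i}$ and summing, since $\sum_i(\hat\simp_{n,i}-\simp_{n,i})=0$, leaving $\mathsf{D}_f = \tfrac{f''(1)}{2}\chi^2_n + R_n$ with $|R_n|\le C(\max_i|r_{n,i}-1|)^\beta\chi^2_n$.

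To compute $n\chi^2_n$ I would first establish uniform asymptotics of $r_n \equiv v_{n+1,\mathsf{k}_n}/v_{n,\mathsf{k}_n}$ and $\tilde r_n \equiv v_{n+1,\mathsf{k}_n+1}/v_{n,\mathsf{k}_n}$ under \Cref{assumption}. Writing $\pi_n(\dd\theta)\propto v_{n,\mathsf{k}_n}(\alpha,\theta)\dd\mu(\theta)$ as the posterior on $\theta$ given $\mathsf{k}_n$, the Ewens--Pitman identities $v_{n+1,k}(\alpha,\theta)/v_{n,k}(\alpha,\theta) = 1/(n+\theta)$ and $v_{n+1,k+1}(\alpha,\theta)/v_{n,k}(\alpha,\theta) = (\theta+k\alpha)/(n+\theta)$ combined with compactness of $\operatorname{supp}(\mu)$ yield $r_n = 1/n - \E_{\pi_n}[\theta]/n^2 + O(n^{-3})$ and $\tilde r_n = \alpha\mathsf{k}_n/n - \E_{\pi_n}[\theta]/n + O(\mathsf{k}_n/n^2)$ uniformly. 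Consequently $\hat\simp_{n,0}-\simp_{n,0} = \mathsf{k}_n(\hat\alpha_n-\alpha)/n + O_P(1/n)$, and for $i\ge 1$, $\hat\simp_{n,i}-\simp_{n,i} = -(\hat\alpha_n-\alpha)/n + (|U_i|-\alpha)\cdot O(1/n^2)$.

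Plugging these decompositions into $n\chi^2_n$, the bias and cross terms are $o_P(1)$, and two dominant pieces remain. The $i=0$ slot contributes $n\cdot[\mathsf{k}_n(\hat\alpha_n-\alpha)/n]^2/(\alpha\mathsf{k}_n/n) = \mathsf{k}_n(\hat\alpha_n-\alpha)^2/\alpha$, while the $i\ge 1$ slots contribute $n\cdot(\hat\alpha_n-\alpha)^2/n^2\cdot\sum_j \mathsf{k}_{n,j}/(j-\alpha)$. Combining $\mathsf{k}_{n,j}/\mathsf{k}_n \to \mathfrak{p}_\alpha(j)$ with the identity $\sum_j \mathfrak{p}_\alpha(j)/(j-\alpha) = \alpha\mathfrak{i}(\alpha) - 1/\alpha$ extracted from \Cref{prop:fisher_info_sibuya}, this becomes $(\hat\alpha_n-\alpha)^2\mathsf{k}_n[\alpha\mathfrak{i}(\alpha)-1/\alpha] + o_P(1)$. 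The $1/\alpha$ terms cancel exactly across the two contributions, leaving $n\chi^2_n = \alpha\cdot\mathsf{k}_n\mathfrak{i}(\alpha)(\hat\alpha_n-\alpha)^2 + o_P(1)$. Applying \Cref{cor:ci_alpha} and the continuous mapping theorem for stable convergence (\Cref{th:CS_stable}) then yields $n\chi^2_n \to \alpha N^2$ $\mathcal{F}_\infty$-stably, whence $n\mathsf{D}_f(\bm{\hat\simp}_n\|\bm\simp_n) \to \tfrac{\alpha f''(1)}{2}|N|^2$ stably.

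The main technical obstacles I anticipate are (a) justifying the exchange of limit and summation in $\mathsf{k}_n^{-1}\sum_j \mathsf{k}_{n,j}/(j-\alpha) \to \sum_j \mathfrak{p}_\alpha(j)/(j-\alpha)$: since $\mathfrak{p}_\alpha(j)\lesssim j^{-(1+\alpha)}$, this sum converges only just, so I would handle the tail via a uniform-integrability argument based on moment bounds $\E[\mathsf{k}_{n,j}]\lesssim n^\alpha j^{-(1+\alpha)}$ inherited from the exchangeable Gibbs structure; and (b) verifying $\max_i|r_{n,i}-1|\to 0$ in probability to validate the Taylor remainder bound across the $(\mathsf{k}_n+1)$-dimensional simplex. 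The identity $r_{n,i}-1 = (1-nr_n)/(nr_n) + (\alpha-\hat\alpha_n)/[(|U_i|-\alpha)nr_n]$ for $i\ge 1$ together with the lower bound $|U_i|-\alpha\ge 1-\alpha$ reduces (b) to the already-controlled quantities $nr_n\to 1$ and $\hat\alpha_n\to\alpha$; a parallel elementary computation handles the $i=0$ slot.
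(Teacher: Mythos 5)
Your proposal follows essentially the same route as the paper: Taylor-expand $f$ about $1$ using the local H\"older condition on $f''$ to bound the remainder, show $\max_i|\hat\simp_{n,i}/\simp_{n,i}-1|=O_p(n^{-\alpha/2})$ so that the linear term vanishes by $\sum_i(\hat\simp_{n,i}-\simp_{n,i})=0$, decompose the Pearson statistic into the $i=0$ slot and the $i\ge 1$ slots, identify the combination $\frac{1}{\alpha}+\sum_j\frac{\mathsf{k}_{n,j}}{\mathsf{k}_n}\frac{1}{j-\alpha}\to^p\alpha\,\mathfrak{i}(\alpha)$ via \Cref{prop:fisher_info_sibuya} and the empirical-measure convergence, and finish with \Cref{thm:mle_clt} plus the stable continuous mapping theorem. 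The ``posterior'' $\pi_n(\dd\theta)\propto v_{n,\mathsf{k}_n}(\alpha,\theta)\dd\mu(\theta)$ you introduce is exactly the paper's tilted measure $\mu_{n,\mathsf{k}_n}$ from \eqref{eq:tilted_measure}, and the cancellation of the explicit $1/\alpha$ against the $-1/\alpha$ in the identity $\sum_j\mathfrak{p}_\alpha(j)/(j-\alpha)=\alpha\mathfrak{i}(\alpha)-1/\alpha$ is the same algebra the paper performs when it writes $\frac{\mathsf{k}_n}{n\alpha}|\hat\alpha_n-\alpha|^2+\sum_j\mathsf{k}_{n,j}\frac{|\hat\alpha_n-\alpha|^2}{n(j-\alpha)}=\frac{\alpha\mathsf{k}_n}{n}|\hat\alpha_n-\alpha|^2\,\mathfrak{i}(\alpha)+o_p(n^{-1})$.

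One remark on your anticipated obstacle (a): the worry is unfounded. The test function $j\mapsto\frac{1}{j-\alpha}$ is \emph{bounded} on $\mathbb{N}$ (by $\frac{1}{1-\alpha}$), so the convergence $\sum_j\frac{\mathsf{k}_{n,j}}{\mathsf{k}_n}\frac{1}{j-\alpha}\asconv\sum_j\mathfrak{p}_\alpha(j)\frac{1}{j-\alpha}$ follows directly from \Cref{lm:bounded_conv} (Scheff\'e's lemma applied to the empirical measure), with no tail control or uniform-integrability argument needed; the limiting series decays like $j^{-(2+\alpha)}$, far from critical. Your proposed UI route via $\E[\mathsf{k}_{n,j}]\lesssim n^\alpha j^{-(1+\alpha)}$ would also work but is superfluous.
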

For \( f(x) = x \log x \), this gives \( n \cdot \mathsf{KL}(\bm{\hat{\simp}}_n \| \bm{\simp}_n) \to \frac{\alpha}{2}|N|^2 \). Thus, KL divergence decays at the rate \( n^{-1} \), matching the typical parametric iid cases. By Pinsker's inequality ($\tv \le \sqrt{2^{-1}\mathsf{KL}}$), this implies \(\mathsf{TV}(\bm{\hat\simp}_n, \bm{\simp}_n)  = O_p(n^{-1/2}) \). However, a sharper analysis reveals $\E[\mathsf{TV}(\bm{\hat\simp}_n, \bm{\simp}_n)^2]^{1/2} \lesssim n^{-1+\frac{\alpha}{2}}$ (note $n^{-1+\frac{\alpha}{2}} \ll  n^{-1/2}$ by $\alpha\in(0,1)$).



\begin{theorem}\label{theorem:tv}
  Let $\mathsf{TV}(\bm{\hat\simp}_n, \bm{\simp}_n) = \frac{1}{2}\sum_{i=0}^{\mathsf{k}_n} |\hat\simp_{n,i}-\simp_{n,i}|$. Then:
  \begin{itemize}
    \item $n^{1-\frac{\alpha}{2}} \cdot  \tv(\bm{\hat\simp}_n, \bm{\simp}_n)$ is bounded in $\mathcal{L}_2$, i.e., $\E[\tv(\bm{\hat\simp}_n, \bm\simp_n)^2]^{1/2} \lesssim n^{-1+\frac{\alpha}{2}}$. 
    \item The limit distribution is given by
    $$
      n^{1-\frac{\alpha}{2}} \cdot  \tv(\bm{\hat\simp}_n,  \bm\simp_n)  \to \sqrt{\frac{\diversity}{\mathfrak{i}(\alpha)}} |N| \quad \stable
    $$
  where $\diversity=\lim_{n\to+\infty} \mathsf{k}_n/n^\alpha$ and $N\sim \mathcal{N}(0,1) \indep \mathcal{F}_\infty$. 
  \end{itemize}
\end{theorem}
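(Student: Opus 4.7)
The plan is to reduce the TV distance to a product of two quantities already handled by earlier theorems: $n^{-\alpha}\mathsf{k}_n$ from \Cref{theorem:Lp_converge} and $n^{\alpha/2}|\hat\alpha_n-\alpha|$ from \Cref{thm:mle_clt}. Introduce $R_n := v_{n+1,\mathsf{k}_n}/v_{n,\mathsf{k}_n}$ and $\epsilon_n := 1 - nR_n$. Direct substitution in the definitions of $\bm{\simp}_n$ and $\bm{\hat\simp}_n$, combined with the conservation $\sum_{i=0}^{\mathsf{k}_n}(\hat\simp_{n,i}-\simp_{n,i}) = 0$ and the identity $\sum_{i\geq 1}(|U_i|-\alpha)=n-\alpha\mathsf{k}_n$, gives
\begin{align*}
n(\hat\simp_{n,i}-\simp_{n,i}) &= (|U_i|-\alpha)\,\epsilon_n - (\hat\alpha_n-\alpha), \qquad i = 1,\dots,\mathsf{k}_n, \\
n(\hat\simp_{n,0}-\simp_{n,0}) &= \mathsf{k}_n(\hat\alpha_n-\alpha) - (n-\alpha\mathsf{k}_n)\,\epsilon_n.
\end{align*}
Under \Cref{assumption}, the elementary ratio $v_{n+1,k}(\alpha,\theta)/v_{n,k}(\alpha,\theta) = 1/(n+\theta)$ yields the representation $\epsilon_n = \int \theta/(n+\theta)\,\dd\mu_n(\theta)$ for the data-dependent posterior $\dd\mu_n\propto v_{n,\mathsf{k}_n}(\alpha,\theta)\,\dd\mu(\theta)$, and compactness of $\operatorname{supp}(\mu) \subset[\underline\theta,\bar\theta]$ forces $|\epsilon_n|\le C/n$ almost surely for a deterministic constant $C$.

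Setting $a := \hat\alpha_n-\alpha$ and $b_i := (|U_i|-\alpha)\epsilon_n$, the budget $\sum_{i=1}^{\mathsf{k}_n}|b_i| = (n-\alpha\mathsf{k}_n)|\epsilon_n| \leq C$ a.s.\ together with
\begin{align*}
2n\,\tv(\bm{\hat\simp}_n,\bm{\simp}_n) = \Bigl|\sum_{i=1}^{\mathsf{k}_n}(a-b_i)\Bigr| + \sum_{i=1}^{\mathsf{k}_n}|a-b_i|
\end{align*}
and the reverse-triangle inequalities $\bigl||\mathsf{k}_n a-\sum_i b_i|-\mathsf{k}_n|a|\bigr| \leq \sum_i|b_i|$ and $\bigl|\sum_i|a-b_i|-\mathsf{k}_n|a|\bigr|\leq \sum_i|b_i|$ produce the deterministic \emph{sandwich}
\begin{align*}
\bigl|\,n\,\tv(\bm{\hat\simp}_n,\bm{\simp}_n) - \mathsf{k}_n|\hat\alpha_n-\alpha|\,\bigr| \leq C \quad \text{almost surely}.
\end{align*}
Dividing by $n^{1-\alpha/2}$ gives $n^{1-\alpha/2}\,\tv = (n^{-\alpha}\mathsf{k}_n)\cdot n^{\alpha/2}|\hat\alpha_n-\alpha| + O(n^{-\alpha/2})$. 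By \Cref{theorem:Lp_converge}, $n^{-\alpha}\mathsf{k}_n\to\diversity$ a.s., and the limit is $\mathcal{F}_\infty$-measurable; by \Cref{thm:mle_clt} combined with the continuous-mapping and Slutsky parts of \Cref{th:CS_stable}, $n^{\alpha/2}|\hat\alpha_n-\alpha| \to |N|/\sqrt{\diversity\,\mathfrak{i}(\alpha)}$ $\stable$. Multiplying the two factors yields the claimed limit $\sqrt{\diversity/\mathfrak{i}(\alpha)}\,|N|$.

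The main obstacle is the $L^2$ bound, since \Cref{thm:mle_clt} delivers only stable convergence in distribution. The sandwich reduces it to $\E[\mathsf{k}_n^2(\hat\alpha_n-\alpha)^2] = O(n^\alpha)$. I would derive this by Taylor-expanding the stationary condition $\partial_\alpha\ell_n(\hat\alpha_n;\delta_0)=0$ around $\alpha$: there exists $\tilde\alpha$ between $\alpha$ and $\hat\alpha_n$ with $|\hat\alpha_n-\alpha|\cdot|\partial_\alpha^2\ell_n(\tilde\alpha;\delta_0)| = |\partial_\alpha\ell_n(\alpha;\delta_0)|$, and the explicit pointwise lower bound $|\partial_\alpha^2\ell_n(\tilde\alpha;\delta_0)| \geq (\mathsf{k}_n-1)/\tilde\alpha^2 \geq \mathsf{k}_n-1$ yields $\mathsf{k}_n|\hat\alpha_n-\alpha| \lesssim |\partial_\alpha\ell_n(\alpha;\delta_0)|$ on the event $\{\mathsf{k}_n \geq 2\}$, whose complement is negligible by \Cref{prop:mle_existence}. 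A second-moment calculation $\E[(\partial_\alpha\ell_n(\alpha;\delta_0))^2] = O(n^\alpha)$ paralleling the proof of \Cref{theorem:fisher} then closes the loop; the subtlety to track is that $\ell_n(\cdot;\delta_0)$ is misspecified relative to the true $\ell_n(\cdot;\mu)$, so the score has a nonzero mean of order $O(n^{\alpha/2})$ that contributes to but does not dominate the second moment.
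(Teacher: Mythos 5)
Your proof takes essentially the same route as the paper. Your sandwich $\bigl|\,n\,\tv(\bm{\hat\simp}_n,\bm{\simp}_n)-\mathsf{k}_n|\hat\alpha_n-\alpha|\,\bigr|\le C$, derived from the exact identities $n(\hat\simp_{n,i}-\simp_{n,i})=(|U_i|-\alpha)\epsilon_n-(\hat\alpha_n-\alpha)$ and the conservation of total mass, is the content of the paper's \Cref{prop:tv_decompose}; your reduction of the $L_2$ bound to $\E[\mathsf{k}_n^2(\hat\alpha_n-\alpha)^2]=O(n^\alpha)$ via Taylor expansion of the quasi-score and the deterministic lower bound $-\partial_\alpha^2\ell_n(\cdot;\delta_0)\ge\mathsf{k}_n-1$ is exactly the paper's \Cref{lemma:qmle_bounded_l2}; and the stable limit follows by the same Slutsky/continuous-mapping combination of \Cref{thm:mle_clt} with $n^{-\alpha}\mathsf{k}_n\to\diversity$. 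One small imprecision: you attribute to the misspecified score $\partial_\alpha\ell_n(\alpha;\delta_0)$ a nonzero mean of order $O(n^{\alpha/2})$; in fact the difference $\partial_\alpha\ell_n(\alpha;\delta_0)-\partial_\alpha\ell_n(\alpha;\mu)=\int \dd\mu_{n,\mathsf{k}_n}(\theta)\sum_{i<\mathsf{k}_n}\theta/(\alpha(\theta+i\alpha))$ is bounded deterministically by $O(\log n)$, so the bias is much smaller than you claim; your conservative overestimate still yields the required $O(n^\alpha)$ for the second moment, so the argument goes through unchanged.
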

Combining the second claim and Slutsky's lemma in \Cref{lm:CS_stable}-(2),  using the identity $\mathsf{TV}(\bm{\hat\simp}_n, \bm{\simp}_n) = \sup_{I\subset \{0, \dots, \mathsf{k}_n\}} |\sum_{i\in I} (\hat\simp_{n,i}-\simp_{n,i})|$, we obtain the following corollary:
\begin{corollary}[Uniform confidence interval]\label{cor:CI_additive} 
    Let $\mathsf{k}_n$ be the number of non-empty blocks and $\mathfrak{i}(\alpha)$ be the Fisher Information of the discrete distribution $\mathfrak{p}_\alpha(j)$ (see \Cref{prop:fisher_info_sibuya}). Then, we have 
  $$
  n \sqrt{\frac{\mathfrak{i}(\alpha)}{\mathsf{k}_n}}\cdot 
   \tv(\bm{\hat\simp}_n, \bm\simp_n) \to  |N|  \quad \stable. 
  $$
  Thus, for any \( \epsilon \in (0,1) \), letting \( \tau_{1 - \epsilon/2} \) denote the \((1 - \epsilon/2)\)-quantile of $\mathcal{N}(0,1)$, we have 
  \begin{align}\label{eq:additive_CI}
  \lim_{n\to+\infty}\PP\left(
  \forall I\subset \{0, 1, \dots \mathsf{k}_n\}, \quad 
\bm{\simp}_n(I) \in  \Bigl[
\bm{\hat\simp}_n(I) \pm
    \frac{\sqrt{\mathsf{k}_n}}{n} \frac{\tau_{1-\epsilon/2}}{\sqrt{\mathfrak{i}(\hat{\alpha}_n)}} 
  \Bigr]\right) = 1-\epsilon. 
  \end{align}
\end{corollary}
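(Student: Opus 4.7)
The plan is to derive this corollary directly from \Cref{theorem:tv}, together with two applications of Slutsky's lemma for stable convergence (\Cref{th:CS_stable}) and the supremum representation of the total variation distance.

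First, I rewrite the scaling factor so as to isolate the quantity appearing in \Cref{theorem:tv}:
\[
n\sqrt{\mathfrak{i}(\alpha)/\mathsf{k}_n}\cdot \tv(\bm{\hat\simp}_n,\bm{\simp}_n) = \sqrt{n^\alpha\mathfrak{i}(\alpha)/\mathsf{k}_n}\cdot \bigl(n^{1-\alpha/2}\,\tv(\bm{\hat\simp}_n,\bm{\simp}_n)\bigr).
\]
By \Cref{theorem:Lp_converge}, the first factor converges almost surely to the $\mathcal{F}_\infty$-measurable random variable $\sqrt{\mathfrak{i}(\alpha)/\diversity}$; by \Cref{theorem:tv}, the second factor converges $\mathcal{F}_\infty$-stably to $\sqrt{\diversity/\mathfrak{i}(\alpha)}\,|N|$ with $N\sim \mathcal{N}(0,1)\indep \mathcal{F}_\infty$. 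Applying \Cref{th:CS_stable}, the product converges $\mathcal{F}_\infty$-stably to $\sqrt{\mathfrak{i}(\alpha)/\diversity}\cdot \sqrt{\diversity/\mathfrak{i}(\alpha)}\,|N| = |N|$, which is the first displayed convergence of the corollary.

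Next, to replace $\mathfrak{i}(\alpha)$ with the data-driven $\mathfrak{i}(\hat\alpha_n)$, I would use consistency of the QMLE, which is an immediate consequence of \Cref{thm:mle_clt}, together with continuity of $\mathfrak{i}$ on $(0,1)$ from \Cref{prop:fisher_info_sibuya}: these imply $\sqrt{\mathfrak{i}(\hat\alpha_n)/\mathfrak{i}(\alpha)}\to 1$ in probability. A second application of \Cref{th:CS_stable} then yields $n\sqrt{\mathfrak{i}(\hat\alpha_n)/\mathsf{k}_n}\cdot \tv(\bm{\hat\simp}_n,\bm{\simp}_n) \to |N|$ $\mathcal{F}_\infty$-stably. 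Because $|N|$ has a continuous distribution, the induced weak convergence gives
\[
\PP\bigl(\tv(\bm{\hat\simp}_n,\bm{\simp}_n)\le \tau_{1-\epsilon/2}\sqrt{\mathsf{k}_n}/(n\sqrt{\mathfrak{i}(\hat\alpha_n)})\bigr)\to \PP(|N|\le \tau_{1-\epsilon/2}) = 1-\epsilon.
\]

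Finally, I would unpack the TV via the identity $\tv(\bm{\hat\simp}_n,\bm{\simp}_n) = \sup_{I\subset\{0,\dots,\mathsf{k}_n\}}|\sum_{i\in I}(\hat\simp_{n,i}-\simp_{n,i})|$ recorded at the start of \Cref{sec:prediction}: the event in the previous display is exactly the event inside the probability in \eqref{eq:additive_CI}, concluding the proof. Since every step assembles results already established in the paper, I do not anticipate any substantive obstacle; the only detail requiring verification is the $\mathcal{F}_\infty$-measurability of the ``in probability'' limits fed into \Cref{th:CS_stable} (namely $\sqrt{\mathfrak{i}(\alpha)/\diversity}$ and the constant $1$), which is immediate since the former is an a.s.\ limit of $\mathcal{F}_n$-measurable variables and the latter is deterministic.
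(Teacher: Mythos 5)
Your proposal is correct and takes essentially the same route as the paper: the paper's proof is the one-line remark immediately preceding the corollary, which says to combine the second claim of \Cref{theorem:tv} with Slutsky's lemma for stable convergence (\Cref{th:CS_stable}) and the supremum identity for TV — exactly the two Slutsky steps (first with $n^{-\alpha}\mathsf{k}_n\to\diversity$ a.s., then with $\mathfrak{i}(\hat\alpha_n)\to^p\mathfrak{i}(\alpha)$) and the unpacking of the event that you spell out. The only implicit detail worth noting is that $\diversity>0$ a.s.\ (so that $\sqrt{\mathfrak{i}(\alpha)/\diversity}$ is well defined and the continuous mapping applies), which is guaranteed by \Cref{theorem:Lp_converge}.
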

\editline{
We refer to this confidence interval as uniform confidence interval (CI) because it works uniformly for all $I\subset \{0, 1, \dots, \mathsf{k}_n\}$. On the other hand, the width $ \frac{\sqrt{\mathsf{k}_n}}{n}$ is the same for every $I$, so that the relative width $\frac{\sqrt{\mathsf{k}_n}}{n}/\bm{\hat\simp}_n(I)$ may fail to converge to $0$ for some $I$. 

Let us characterize such subsets $I$. When $0\in I$, using \( \hat{\simp}_{n,0} = \mathsf{k}_n \hat{\alpha}_n / n \) and $\bm{\hat\simp}_n(I)=\sum_{i\in I} \hat{\simp}_{n,i}\ge \hat{\simp}_{n,0} $, the relative width is bounded as
$$
\frac{\frac{\sqrt{\mathsf{k}_n}}{n}}{\bm{\hat\simp}_n(I)} \le \frac{\frac{\sqrt{\mathsf{k}_n}}{n}}{\hat\simp_{n,0}}  = \frac{1}{\sqrt{\mathsf{k}_n} \hat\alpha_n} = O_p(n^{-\alpha/2}) = o_p(1). 
$$
In other words, if $I$ includes $0$, then the relative width for $\bm{\simp}_n(I)$ converges to $0$. On the other hand, when $0 \not\in I$ (equivalently $I\subset \{1, \dots, \mathsf{k}_n\}$), using \( \hat{\simp}_{n,i} = (|U_i| - \hat{\alpha}_n)/n \) for \( i \in \{1, \dots \mathsf{k}_n\} \), we have 
$$
\frac{\frac{\sqrt{\mathsf{k}_n}}{n}}{\bm{\hat\simp}_n(I)} = \frac{\sqrt{\mathsf{k}_n}}{\sum_{i \in I} (|U_i| - \hat{\alpha}_n)}.
$$
This shows that, if $I\subset \{1, \dots, \mathsf{k}_n\}$ and $\sum_{i\in I} |U_i|$ is relatively small compared with $\sqrt{\mathsf{k}_n}$, the relative width of the uniform CI for $\bm{\simp}_n(I)$ does not converge to $0$. We therefore propose an alternative confidence interval whose relative width shrinks for all such subsets $I\subset \{1, \dots \mathsf{k}_n\}$. 
}
\begin{theorem}[Locall confidence interval]\label{theorem:ratio_consistency}
For any random variable $\delta_n$ satisfying $\mathsf{k}_n^{-1} \le \delta_n = o_p(n^{-\alpha/2})$,  define the random set $\mathcal{I}_n$ as 
  \begin{align}\label{eq:def_random_subsets}
      \mathcal{I}_n \equiv \biggl\{
    I\subset \{1, \dots \mathsf{k}_n\}: \frac{1}{|I|}\sum_{i\in I} |U_i| \le n \cdot \delta_n
\biggr\}. 
  \end{align}
Then, it holds that 
\begin{align}\label{eq:forall_I}
  \lim_{n\to+\infty} \PP\Biggl(\forall I\in \mathcal{I}_n, \quad \bm\simp_n(I) \in \biggl[\bm{\hat{\simp}}_n(I) \pm \frac{|I|}{n \sqrt{\mathsf{k}_n}} \frac{\tau_{1-\epsilon/2}}{\sqrt{\mathfrak{i}(\hat\alpha_n)}}\biggr]\Biggr) = 1-\epsilon.
\end{align}
\end{theorem}
\editline{
We may choose the threshold $\delta_n$, for example, as $\delta_n = \mathsf{k}_n^{-0.5001}$ or $\delta_{n} = (\sqrt{\mathsf{k}_n} \log n)^{-1} \vee \mathsf{k}_n^{-1}$, both of which satisfy the required condition. 
Note that the random set $\mathcal{I}_n$ is not empty, especially $\{1, \dots, \mathsf{k}_n\} \in \mathcal{I}_n$, since $\mathsf{k}_n^{-1} \sum_{i=1}^{\mathsf{k}_n} |U_i| =\mathsf{k}_n^{-1} n\le n \delta_n$ by the condition $\delta_n \ge1/\mathsf{k}_n$. 
We refer to this new confidence interval as local CI, since its width depends on $|I|$, whereas it is valid only for subsets $I$ in  $\mathcal{I}_n$. 
}

\editline{
Finally, we claim that this local CI and the previous uniform CI complement each other in the following sense. 
\begin{proposition}\label{prop:compare_uniform_local}
The uniform CI and the local CI satisfy the following. 
\begin{enumerate}
  \item The absolute width of the local CI is at most that of the uniform CI:
  \begin{align*}
    \max_{I\in \mathcal{I}_n} \frac{|I|}{n \sqrt{\mathsf{k}_n}} = \frac{\sqrt{\mathsf{k}_n}}{n}.
  \end{align*}
  \item The relative width of the local CI converges to $0$ uniformly for all $I\in \mathcal{I}_n$:
  \begin{align*}
  \max_{I\in \mathcal{I}_n} \frac{\frac{|I|}{n \sqrt{\mathsf{k}_n}}}{\bm{\hat\simp}_n(I)} \le \frac{1}{\sqrt{\mathsf{k}_n} (1-\hat{\alpha}_n)} = O_p(n^{-\alpha/2}).
\end{align*}
\item 
For all $I\subset \{0, 1, \dots, \mathcal{I}_n\}$ but $I\notin \mathcal{I}_n$, the relative width of the uniform CI is uniformly bounded  as 
\begin{align*}
  \max_{I\subset \{0, 1, \dots, \mathsf{k}_n\}, I \notin \mathcal{I}_n} \frac{\frac{\sqrt{\mathsf{k}_n}}{n}}{\bm{\hat\simp}_n(I)} \le \frac{1}{\sqrt{\mathsf{k}_n}\hat\alpha_n} \vee \frac{\sqrt{\mathsf{k}_n}}{n\delta_n -1}.
\end{align*}
\end{enumerate}
\end{proposition}
For the third claim, we may choose the threshold $\delta_n$ in the definition of $\mathcal{I}_n$ as $\delta_n = (\sqrt{\mathsf{k}_n} \log n)^{-1} \vee \mathsf{k}_n^{-1}$. Then, the relative width of the uniform CI converges to $0$ uniformly:
$$
  \max_{I\subset \{0, 1, \dots, \mathsf{k}_n\}, I \notin \mathcal{I}_n}  \frac{\frac{\sqrt{\mathsf{k}_n}}{n}}{\bm{\hat\simp}_n(I)} = O_p(n^{-\alpha/2} \vee 
  n^{-1+\alpha} \log n
  ) = o_p(1).
$$
Combined with the second claim, we see that the uniform CI and the local CI complement each other; the local CI provides a valid confidence interval (its relative width converges to $0$) for all $I\in \mathcal{I}_n$, whereas the uniform CI gives a valid confidence interval for any other $I \notin \mathcal{I}_n$. 
}

\section{Numerical simulation}\label{sec:simulation}
\editline{
We first verify \Cref{thm:Lp_converge}. The main purpose is to visualize the limit distribution $\diversity = \lim_{n\to+\infty}n^{-\alpha}\mathsf{k}_n$ under \Cref{assumption}
 for various mixing distribution $\mu$. We first fix $\alpha=0.3$ and consider the discrete distributions, including the Dirac measures $\delta_0, \delta_3, \delta_{10}$ and the point mass mixture $3^{-1}(\delta_0 + \delta_3 + \delta_{10})$. 
 We generate $10000$ independent realizations of the random partition processes up to $n=50000$ and we plot the histogram of $n^{-\alpha} \mathsf{k}_n$. 
While the limit distribution of $\diversity$ for the Ewens--Pitman family $\mu=\delta_\theta$ is unimodal, \Cref{fig:diversity} suggests that choosing a multimodal $\mu$ can make $\diversity$ multimodal. Next, we consider continuous mixing distributions, including the half-norm distribution and the uniform distribution, and plot the histogram of $n^{-\alpha}\mathsf{k}_n$ in \Cref{fig:diversity}. We see that these continuous distributions induce larger variance than the Ewens--Pitman family $(\mu=\delta_\theta)$. In summary, these numerical simulations suggest that the subclass of Gibbs partitions satisfying \Cref{assumption} includes nontrivial families that are not covered by the Ewens--Pitman family. 

\begin{figure}
  \centering
  \includegraphics[width=\linewidth]{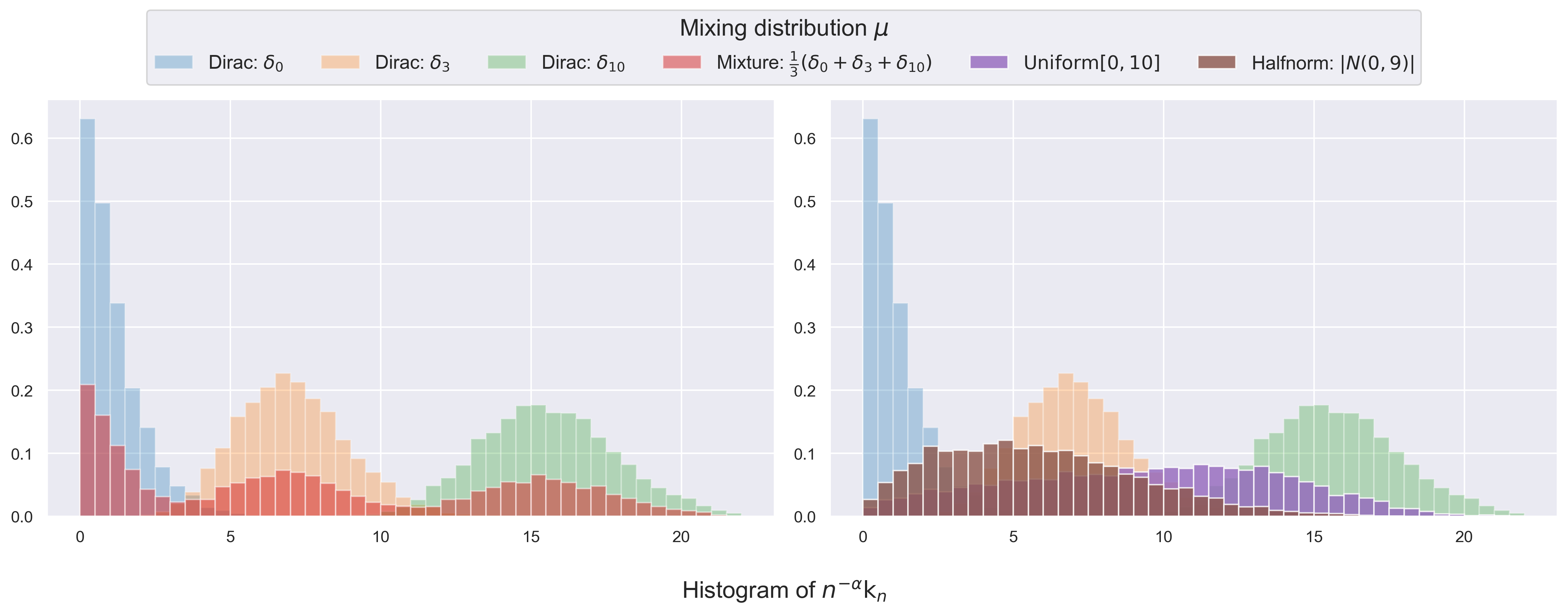}
  \caption{Histogram of the number of nonempty sets $\mathsf{k}_n$ normalized by $n^{\alpha}$. Parameters are taken as $n=50000$, $\alpha=0.3$, and sample size = $10000$. }
  \label{fig:diversity}
\end{figure}
}

Next, we verify our result about the limit distribution theory; our theoretical findings (\Cref{cor:ci_alpha}, \Cref{theorem:tv}, and \Cref{thm:f_div}) can be summarized as 
\begin{align}\label{eq:result_summary}
  \sqrt{\mathsf{k}_n \mathfrak{i}(\alpha)} (\hat\alpha_n-\alpha) \to \mathcal{N}(0,1), \quad n \sqrt{\frac{\mathfrak{i}(\alpha)}{\mathsf{k}_n}}\tv(\bm{\hat\simp}_n, \bm{\simp}_n) \to |\mathcal{N}(0,1)|, \quad \frac{2n}{\alpha} \mathsf{KL}(\bm{\hat{\simp}}_n||\bm{\simp}_n)\to |\mathcal{N}(0,1)|^2. 
\end{align}
To empirically validate these results, we conduct a simulation study and visualize the convergence through QQ plots.
\editline{
  We fix $\alpha=0.8$ and consider the following mixing distribution $\mu$: the Dirac measure $\delta_0$, the two-point mixture $\frac{1}{2}\delta_0 + \frac{1}{2}\delta_3$, the uniform distribution over $[0,3]$, and the halfnorm distribution $|\mathcal{N}(0,1)|$. 
We generate $1000$ independent realizations of the random partition process up to $n=20000$. For each realization, we compute the QMLE $\hat\alpha$, the TV distance $\tv(\bm{\hat\simp}_n, \bm{\simp}_n)$, and the KL divergence $\mathsf{KL}(\bm{\hat{\simp}}_n, \bm{\simp}_n)$. 
\Cref{fig:qq} shows the QQ plots for each of the three normalized statistics, demonstrating close agreement with the respective limiting distributions and thereby corroborating our theoretical claims. Interestingly, empirical quantiles and theoretical quantiles even match for unbounded distribution (halfnorm), which implies that our theoretical result holds beyond the bounded support assumption on $\mu$ (\Cref{assumption}). We also repeated the experiment for the half t-distribution $\mu=|\text{t-dist}(\text{df})|$ with varying degrees of freedom $\text{df}$; the corresponding QQ plots suggest that our theorem extends to heavy-tailed mixing distributions as well. 

\begin{figure}[htbp]
  \centering
  \includegraphics[width=\linewidth]{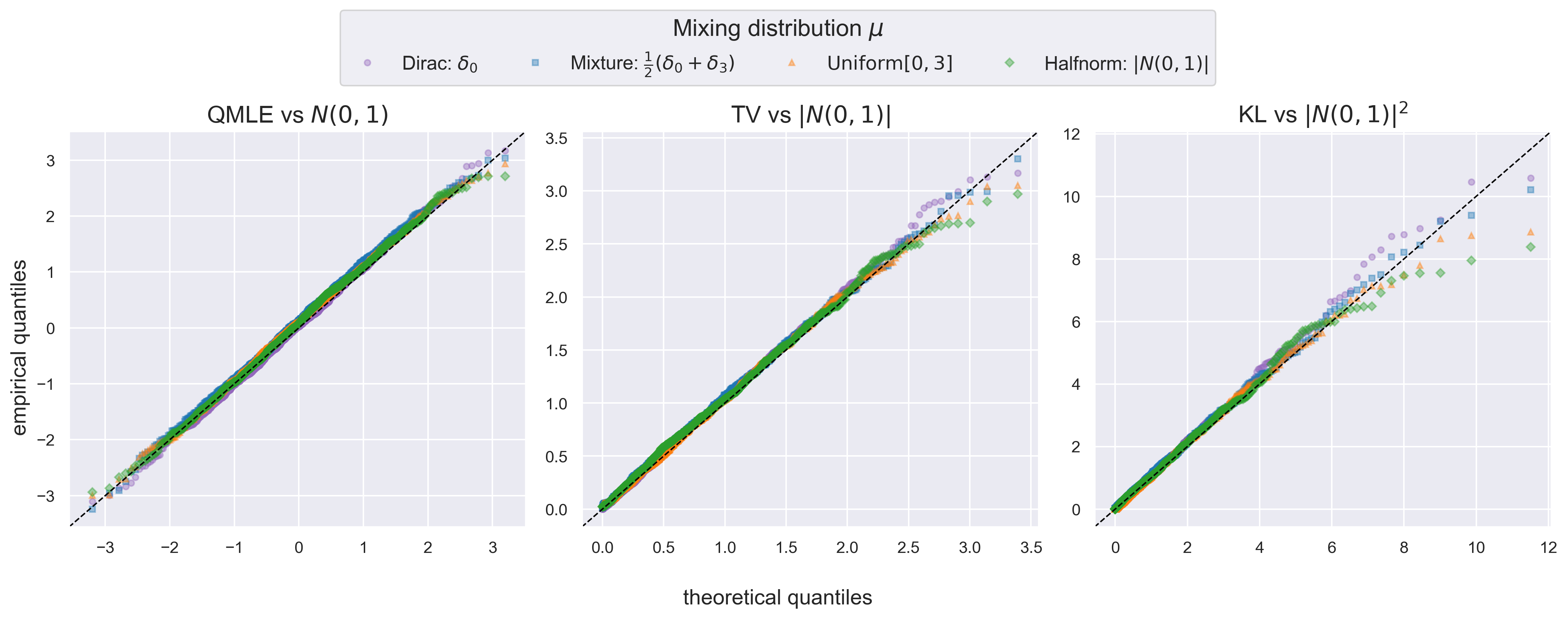}
    \includegraphics[width=\linewidth]{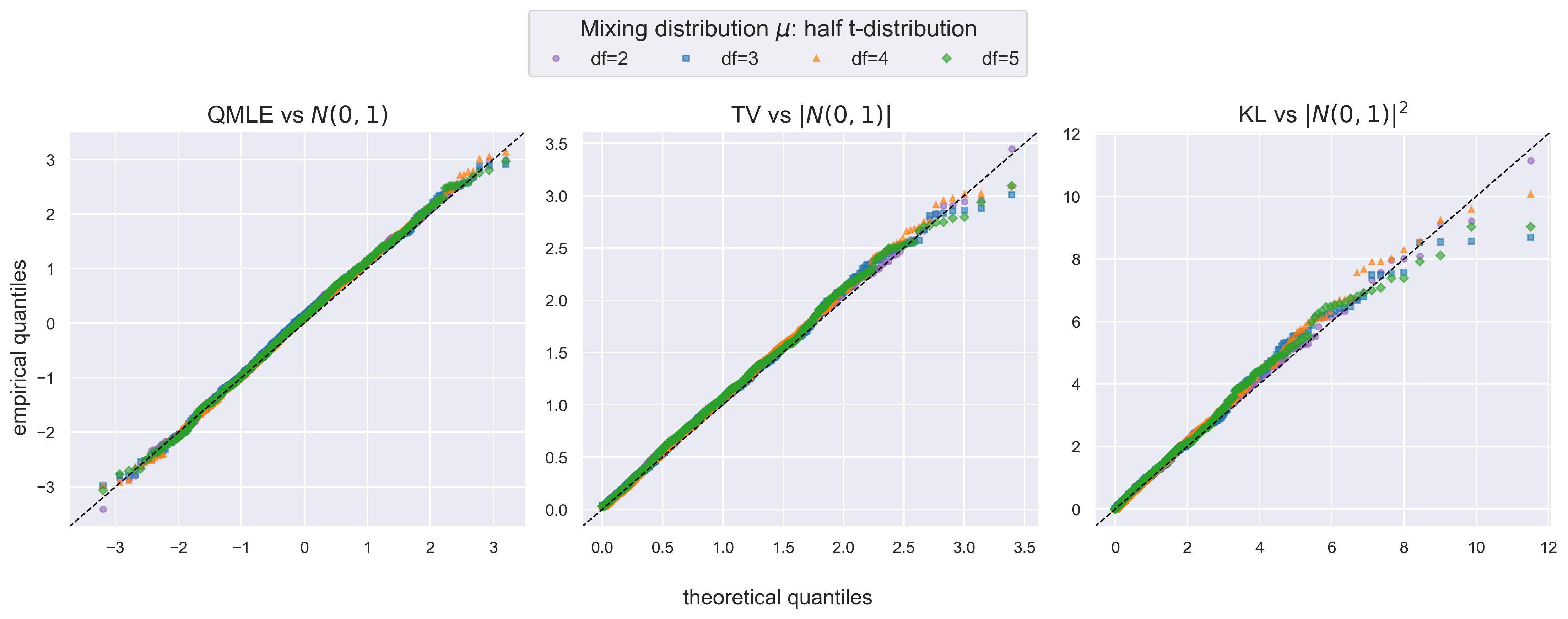}
      \caption{QQ plots comparing empirical quantiles of normalized statistics to the corresponding theoretical distributions (see \eqref{eq:result_summary}) with parameters $n=20000$, $\alpha=0.8$, and sample size =$1000$.}
      \label{fig:qq}
\end{figure}
}

\editline{
Finally, we verify \Cref{theorem:ratio_consistency}. Since $\mathcal{I}_n$ is a subset of the power set $2^{\{1, \dots, \mathsf{k}_n\}}$ that can be exponentially large in $n$, we verify a tractable consequence. Note that the proof of \Cref{theorem:ratio_consistency} reveals that the main claim \eqref{eq:forall_I} also holds with $\forall I\in \mathcal{I}_n$ replaced by $\exists I\in \mathcal{I}_n$ (see \Cref{rm:exist_I}). 
  Therefore, for any distribution $\rho$ on $\mathcal{I}_n$, it holds that 
    \begin{align}\label{eq:random_I}
    \lim_{n\to+\infty} \PP_{I\sim \rho} \Biggl(\bm\simp_n(I) \in \biggl[\bm{\hat{\simp}}_n(I) \pm \frac{|I|}{n \sqrt{\mathsf{k}_n}} \frac{\tau_{1-\epsilon/2}}{\sqrt{\mathfrak{i}(\hat\alpha_n)}}\biggr]\Biggr) = 1-\epsilon, 
  \end{align}
where $\PP_{I\sim \rho}$ is taken over the randomness of the random partition and the randomness of $I\sim \rho$. 
  In numerical simulation, we verify \eqref{eq:random_I} by taking $\rho$ to be the uniform distribution $\text{Unif}(\mathcal{I}_n)$ and setting $\epsilon=0.05$, while $\delta_n = \mathsf{k}_n^{-0.51}$ in the definition of $\mathcal{I}_n$. We generate $1000$ independent realizations of the random partition process up to $n=40000$, and for each realization, we sample $I\sim \text{Unif}(\mathcal{I}_n)$ using the rejection sampling and check whether $\bm\simp_n(I)$ falls within the confidence interval. We report the empirical coverage in \Cref{tab:coverage}. We see that the coverage probability is close to $0.95$ in agreement with \eqref{eq:random_I}. 

  All code used to produce the above results is available at the following GitHub repository \url{https://github.com/TakuyaKoriyama/gibbs-partition-code}. 

\begin{table}[htpb]
\centering
\caption{Empirical coverage of the local confidence interval \eqref{eq:random_I}. We fix $n=40000$ and sample size $=1000$. }
\label{tab:coverage}
\begin{tabular}{c c c c c c}
\hline
& & \multicolumn{4}{c}{Mixing distribution $\mu$} \\
\cline{3-6}
\multirow{4}{*}{$\alpha$}
& 
& Dirac $\delta_{0}$
& Mixture $\frac{1}{2}(\delta_0 + \delta_3)$
& $\mathrm{Unif}[0,3]$
& Half-normal $|\mathcal{N}(0,1)|$ \\
\cline{2-6}
& 0.6 & 0.926 & 0.936 & 0.947 & 0.940 \\
& 0.7 & 0.933 & 0.937 & 0.936 & 0.936 \\
& 0.8 & 0.944 & 0.947 & 0.946 & 0.954 \\
\hline
\end{tabular}
\end{table}
}

\bibliographystyle{alpha}
\bibliography{reference}

\newcommand{\etalchar}[1]{$^{#1}$}
\begin{thebibliography}{CNW{\etalchar{+}}17}

\bibitem[AF21]{arbel2021approximating}
Julyan Arbel and Stefano Favaro.
\newblock {Approximating predictive probabilities of Gibbs-type priors}.
\newblock {\em Sankhya A}, 83(1):496--519, 2021.

\bibitem[ALC19]{ayed2019beyond}
Fadhel Ayed, Juho Lee, and Fran{\c{c}}ois Caron.
\newblock {Beyond the Chinese Restaurant and Pitman-Yor processes: Statistical Models with double power-law behavior}.
\newblock In {\em International Conference on Machine Learning}, pages 395--404. PMLR, 2019.

\bibitem[BF24]{bercu2024martingale}
Bernard Bercu and Stefano Favaro.
\newblock {A martingale approach to Gaussian fluctuations and laws of iterated logarithm for Ewens--Pitman model}.
\newblock {\em Stochastic Processes and their Applications}, 178:104493, 2024.

\bibitem[BFN22]{balocchi2022bayesian}
Cecilia Balocchi, Stefano Favaro, and Zacharie Naulet.
\newblock {Bayesian Nonparametric Inference for ``Species-sampling'' Problems}.
\newblock {\em arXiv preprint arXiv:2203.06076}, 2022.

\bibitem[BFT18]{battiston2018multi}
Marco Battiston, Stefano Favaro, and Yee~Whye Teh.
\newblock {Multi-armed bandit for species discovery: a Bayesian nonparametric approach}.
\newblock {\em Journal of the American Statistical Association}, 113(521):455--466, 2018.

\bibitem[Car99]{carlton1999applications}
Matthew~Aaron Carlton.
\newblock {\em {Applications of the two-parameter Poisson--Dirichlet distribution}}.
\newblock PhD thesis, University of California, Los Angeles, 1999.

\bibitem[CCV23]{cereda2022learning}
Giulia Cereda, Fabio Corradi, and Cecilia Viscardi.
\newblock {Learning the two parameters of the Poisson--Dirichlet distribution with a forensic application}.
\newblock {\em Scandinavian Journal of Statistics}, 50(1):120--141, 2023.

\bibitem[CD18]{crane2016edge}
Harry Crane and Walter Dempsey.
\newblock {Edge Exchangeable Models for Interaction Networks}.
\newblock {\em Journal of the American Statistical Association}, 113(523):1311--1326, 2018.

\bibitem[CNW{\etalchar{+}}17]{caron2017generalized}
Fran{\c{c}}ois Caron, Willie Neiswanger, Frank Wood, Arnaud Doucet, and Manuel Davy.
\newblock {Generalized P{\'{o}}lya urn for Time-Varying Pitman--Yor processes}.
\newblock {\em Journal of Machine Learning Research}, 18(27):1--32, 2017.

\bibitem[CS98]{christoph1998discrete}
Gerd Christoph and Karina Schreiber.
\newblock Discrete stable random variables.
\newblock {\em Statistics \& Probability Letters}, 37(3):243--247, 1998.

\bibitem[DDT17]{dahl2017random}
David~B Dahl, Ryan Day, and Jerry~W Tsai.
\newblock Random partition distribution indexed by pairwise information.
\newblock {\em Journal of the American Statistical Association}, 112(518):721--732, 2017.

\bibitem[Dev93]{devroye1993triptych}
Luc Devroye.
\newblock A triptych of discrete distributions related to the stable law.
\newblock {\em Statistics \& Probability Letters}, 18(5):349--351, 1993.

\bibitem[DF20]{dolera2020berry}
Emanuele Dolera and Stefano Favaro.
\newblock {A Berry--Esseen theorem for Pitman’s $\alpha$-diversity}.
\newblock {\em The Annals of Applied Probability}, 30(2):847--869, 2020.

\bibitem[Dur19]{DU19}
Rick Durrett.
\newblock {\em {Probability: Theory and Examples}}.
\newblock Cambridge University Press, fifth edition, 2019.

\bibitem[FLMP09]{favaro2009bayesian}
Stefano Favaro, Antonio Lijoi, Rams{\'e}s~H Mena, and Igor Pr{\"u}nster.
\newblock {Bayesian non-parametric inference for species variety with a two-parameter Poisson--Dirichlet process prior}.
\newblock {\em Journal of the Royal Statistical Society: Series B (Statistical Methodology)}, 71(5):993--1008, 2009.

\bibitem[FN23]{favaro2021near}
Stefano Favaro and Zacharie Naulet.
\newblock Near-optimal estimation of the unseen under regularly varying tail populations.
\newblock {\em Bernoulli}, 29(4):3423--3442, 2023.

\bibitem[FPR21]{favaro2021bayesian}
Stefano Favaro, Francesca Panero, and Tommaso Rigon.
\newblock Bayesian nonparametric disclosure risk assessment.
\newblock {\em Electronic Journal of Statistics}, 15(2):5626--5651, 2021.

\bibitem[FvdV22]{franssen2022bernstein}
SEMP Franssen and AW~van~der Vaart.
\newblock {Bernstein-von Mises theorem for the Pitman-Yor process of nonnegative type}.
\newblock {\em Electronic Journal of Statistics}, 16(2):5779--5811, 2022.

\bibitem[GHP07]{gnedin2007notes}
Alexander Gnedin, Ben Hansen, and Jim Pitman.
\newblock Notes on the occupancy problem with infinitely many boxes: general asymptotics and power laws.
\newblock {\em Probability Surveys}, 4:146--171, 2007.

\bibitem[GP06]{gnedin2006exchangeable}
Alexander Gnedin and Jim Pitman.
\newblock {Exchangeable Gibbs partitions and Stirling triangles}.
\newblock {\em Journal of Mathematical sciences}, 138:5674--5685, 2006.

\bibitem[HL15]{hausler2015stable}
Erich H{\"a}usler and Harald Luschgy.
\newblock {\em {Stable Convergence and Stable Limit Theorems}}.
\newblock Springer, 2015.

\bibitem[Hos01]{hoshino2001applying}
Nobuaki Hoshino.
\newblock {Applying Pitman's sampling formula to microdata disclosure risk assessment}.
\newblock {\em Journal of Official Statistics}, 17(4):499--520, 2001.

\bibitem[Kin78]{kingman1978representation}
John~FC Kingman.
\newblock The representation of partition structures.
\newblock {\em Journal of the London Mathematical Society}, 2(2):374--380, 1978.

\bibitem[Kin82]{kingman1982coalescent}
John Frank~Charles Kingman.
\newblock The coalescent.
\newblock {\em Stochastic Processes and their Applications}, 13(3):235--248, 1982.

\bibitem[KMK25]{koriyama2022asymptotic}
Takuya Koriyama, Takeru Matsuda, and Fumiyasu Komaki.
\newblock {Asymptotic mixed normality of maximum-likelihood estimator for Ewens--Pitman partition}.
\newblock {\em Advances in Applied Probability}, pages 1--21, 2025.

\bibitem[LPW08]{lijoi2008investigating}
Antonio Lijoi, Igor Pr{\"u}nster, and Stephen~G Walker.
\newblock {Investigating nonparametric priors with Gibbs structure}.
\newblock {\em Statistica Sinica}, 18(4):1653--1668, 2008.

\bibitem[NRC24]{naulet2021asymptotic}
Zacharie Naulet, Judith Rousseau, and Fran{\c{c}}ois Caron.
\newblock {Asymptotic analysis of statistical estimators related to MultiGraphex processes under misspecification}.
\newblock {\em Bernoulli}, 30(4):2644--2675, 2024.

\bibitem[Pak95]{pakes1995characterization}
Anthony~G Pakes.
\newblock {Characterization of discrete laws via mixed sums and Markov branching processes}.
\newblock {\em Stochastic Processes and their Applications}, 55(2):285--300, 1995.

\bibitem[Pit06]{pitman2006combinatorial}
Jim Pitman.
\newblock {\em {Combinatorial Stochastic Processes: Ecole d'et{\'e} de probabilit{\'e}s de saint-flour xxxii-2002}}.
\newblock Springer, 2006.

\bibitem[PJ95]{pillai1995discrete}
RN~Pillai and K~Jayakumar.
\newblock {Discrete Mittag-Leffler distributions}.
\newblock {\em Statistics \& Probability Letters}, 23(3):271--274, 1995.

\bibitem[Res07]{resnick2007heavy}
Sidney~I Resnick.
\newblock {\em {Heavy-Tail Phenomena: Probabilistic and Statistical Modeling}}.
\newblock Springer Science \& Business Media, 2007.

\bibitem[RPS25]{rigon2025enriched}
Tommaso Rigon, Sonia Petrone, and Bruno Scarpa.
\newblock {Enriched Pitman--Yor processes}.
\newblock {\em Scandinavian Journal of Statistics}, 52(2):631--657, 2025.

\bibitem[Sib79]{sibuya1979generalized}
Masaaki Sibuya.
\newblock {Generalized hypergeometric, digamma and trigamma distributions}.
\newblock {\em Annals of the Institute of Statistical Mathematics}, 31(3):373--390, 1979.

\bibitem[Sib14]{sibuya2014prediction}
Masaaki Sibuya.
\newblock {Prediction in Ewens--Pitman sampling formula and random samples from number partitions}.
\newblock {\em Annals of the Institute of Statistical Mathematics}, 66(5):833--864, 2014.

\bibitem[SN10]{sato2010topic}
Issei Sato and Hiroshi Nakagawa.
\newblock {Topic models with power-law using Pitman-Yor process}.
\newblock In {\em Proceedings of the 16th ACM SIGKDD international conference on Knowledge discovery and data mining}, pages 673--682, 2010.

\bibitem[Teh06]{teh2006hierarchical}
Yee~Whye Teh.
\newblock {A hierarchical Bayesian language model based on Pitman-Yor processes}.
\newblock In {\em Proceedings of the 21st International Conference on Computational Linguistics and 44th Annual Meeting of the Association for Computational Linguistics}, pages 985--992, 2006.

\end{thebibliography}


\newpage

\appendix

\section{Basic lemma}
\subsection{Definition of tilted measure}
Let $\mu$ be the mixing distribution appearing in \Cref{assumption}. For any $1\le k\le n$, define the tilted measure $\dd \mu_{n,k}$ as 
\begin{align}\label{eq:tilted_measure}
 1\le \forall k\le \forall n, \quad 
    \dd \mu_{n,k}(\theta) \equiv \frac{\dd \mu(\theta) v_{n,k}(\alpha, \theta)}{\int \dd\mu(\theta') v_{n,k}(\alpha,\theta')} \quad \text{with} \quad v_{n,k}(\alpha;\theta) = \frac{\prod_{i=1}^{k-1} (\theta + i\alpha)}{\prod_{i=1}^{n-1}(\theta + i)}.
\end{align}
Since the weight $v_{n,k}(\alpha;\theta)$ is strictly positive for all $\theta \in (-\alpha, +\infty)$, the support of the tilted probability measure $\mu_{n,k}$ is equal to the support of $\mu$. Combined with the condition on the support of $\mu$ from \Cref{assumption}, we have 
\begin{align}\label{eq:supp_v_nk}
1\le \forall k \le \forall n, \quad 
\operatorname{supp}(\mu_{n,k}) = \operatorname{supp} (\mu) \subset [\underline{\theta}, \bar\theta] \quad \text{with} -\alpha < \underline{\theta}\le 0 \le \bar\theta <+\infty.  
\end{align}
As we will see in later, we will handle integrals of this form $\int \dd\mu_{n,k} (\theta) h(\theta)$ for some function $h:(-\alpha, \infty) \to \mathbb{R}$ many times. By \eqref{eq:supp_v_nk}, these integrals can be estimated as 
\begin{align}\label{eq:integral_controll}
\inf_{\theta\in [\underline{\theta}, \bar\theta]} h(\theta) \le \int \dd\mu_{n,k}(\theta) h(\theta) \le \sup_{\theta\in [\underline{\theta}, \bar\theta]} h(\theta) \quad \text{and} \quad \Bigl|\int \dd\mu_{n,k}(\theta) h(\theta)\Bigr|\le \sup_{\theta\in [\underline{\theta}, \bar\theta]} |h(\theta)|
\end{align}
We will frequently use this property.

\begin{lemma}[Concentration of triangular array $v_{n,k}$]\label{lm:concentrate_v_nk}
  Suppose the triangular array $v_{n,k}$ satisfies 
  \Cref{assumption}. Then, for any $n\in \mathbb{N}$ and  $1\le k\le n$, we have 
  \begin{align*}
    \Bigl|n \frac{v_{n+1, k}}{v_{n, k}} - 1\Bigr| \vee  \Bigl|\frac{v_{n+1, k+1}}{v_{n, k}} - \frac{k}{n}\alpha\Bigr|  \le \frac{\bar{\theta}-\underline{\theta}}{n+\underline\theta}. 
  \end{align*}
\end{lemma}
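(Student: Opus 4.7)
The plan is to reduce everything to integrals against the tilted measure $\mu_{n,k}$ introduced in \eqref{eq:tilted_measure}, and then exploit the compact support of $\mu$ from \Cref{assumption} via the bookkeeping inequalities \eqref{eq:integral_controll}. No probabilistic argument is needed: the lemma is a deterministic computation with the Ewens--Pitman weights $v_{n,k}(\alpha, \theta)$.

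First, I would compute the pointwise ratios directly from the formula $v_{n,k}(\alpha, \theta)=\prod_{i=1}^{k-1}(\theta + i\alpha) \big/ \prod_{i=1}^{n-1}(\theta + i)$. Massive cancellation gives
\begin{align*}
\frac{v_{n+1, k}(\alpha, \theta)}{v_{n,k}(\alpha, \theta)} = \frac{1}{\theta + n}, \qquad \frac{v_{n+1, k+1}(\alpha, \theta)}{v_{n,k}(\alpha, \theta)} = \frac{\theta + k\alpha}{\theta + n}.
\end{align*}
Integrating the numerators against $\mu$ and dividing through by $\int \dd\mu(\theta') v_{n,k}(\alpha, \theta')$ turns these identities into statements about the tilted measure $\mu_{n,k}$:
\begin{align*}
\frac{v_{n+1, k}}{v_{n, k}} = \int \dd \mu_{n,k}(\theta) \, \frac{1}{\theta+n}, \qquad \frac{v_{n+1, k+1}}{v_{n,k}} = \int \dd \mu_{n,k}(\theta) \, \frac{\theta+k\alpha}{\theta+n}.
\end{align*}

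Next I would subtract the target constants inside the integrals. For the first,
\begin{align*}
n \frac{v_{n+1,k}}{v_{n,k}} - 1 = \int \dd \mu_{n,k}(\theta) \Bigl(\frac{n}{\theta + n} - 1\Bigr) = -\int \dd \mu_{n,k}(\theta) \frac{\theta}{\theta+n}.
\end{align*}
For the second, a common denominator gives
\begin{align*}
\frac{v_{n+1, k+1}}{v_{n, k}} - \frac{k\alpha}{n} = \int \dd \mu_{n,k}(\theta) \frac{n(\theta + k\alpha) - k\alpha(\theta + n)}{n(\theta+n)} = \int \dd \mu_{n,k}(\theta) \frac{\theta(n-k\alpha)}{n(\theta+n)}.
\end{align*}

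Finally I would apply \eqref{eq:integral_controll}: since $\operatorname{supp}(\mu_{n,k}) \subset [\underline\theta, \bar\theta]$ with $\underline\theta \le 0 \le \bar\theta$ and $\underline\theta > -\alpha$, on this support one has $|\theta| \le \max(|\underline\theta|, \bar\theta) \le \bar\theta - \underline\theta$ and $\theta + n \ge n + \underline\theta > 0$. Since $k \le n$ and $\alpha \in (0,1)$ also force $(n-k\alpha)/n \in [0,1]$, both integrands are bounded in absolute value by $\frac{\bar\theta - \underline\theta}{n + \underline\theta}$, and the inequality follows. There is no real obstacle; the only subtle step is recognizing that the quantities one wants to control in the lemma are precisely $\mu_{n,k}$-averages of the explicit ratios available in the Ewens--Pitman case.
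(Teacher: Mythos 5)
Your proposal is correct and follows essentially the same route as the paper's proof: rewrite the ratios $v_{n+1,k}/v_{n,k}$ and $v_{n+1,k+1}/v_{n,k}$ as $\mu_{n,k}$-averages of the Ewens--Pitman ratios, subtract the deterministic targets inside the integral, and bound the resulting integrand $|\theta|/(\theta+n)$ (times a factor in $[0,1]$ for the second term) uniformly over the support $[\underline\theta,\bar\theta]$ using \eqref{eq:integral_controll}. The only cosmetic difference is that you bound numerator and denominator separately ($|\theta|\le\bar\theta-\underline\theta$, $\theta+n\ge n+\underline\theta$) rather than maximizing the quotient directly; both yield the stated constant.
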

\begin{proof}
  By \Cref{assumption} and the definition of tilted probability measure \eqref{eq:tilted_measure}, we have 
  \begin{align*}
    \frac{v_{n+1, k}}{v_{n, k}} &= \frac{\int \dd\mu(\theta) \frac{\prod_{i=1}^{k-1} (\theta + i\alpha)}{\prod_{i=1}^{n-1}(\theta + i)}\frac{1}{n+\theta}}{\int\dd\mu(\theta)  \frac{\prod_{i=1}^{k-1} (\theta + i\alpha)}{\prod_{i=1}^{n-1}(\theta + i)}} =  \int \dd \mu_{n,k}(\theta) \frac{1}{\theta+n}= \frac{1}{n} + {\int \dd\mu_{n, k}(\theta) \bigl(\frac{1}{\theta+n}-\frac{1}{n}\bigr)}, \\
    \frac{v_{n+1, k+1}}{v_{n, k}} &= \frac{\int \dd\mu(\theta) \frac{\prod_{i=1}^{k-1} (\theta + i\alpha)}{\prod_{i=1}^{n-1}(\theta + i)}\frac{\theta+k\alpha}{n+\theta}}{\int\dd\mu(\theta)  \frac{\prod_{i=1}^{k-1} (\theta + i\alpha)}{\prod_{i=1}^{n-1}(\theta + i)}}  =  \int \dd\mu_{n,k}(\theta) \frac{\theta+k\alpha}{\theta + n} = \frac{k}{n}\alpha + {\int \dd\mu_{n, k} \bigl(\frac{\theta+k \alpha}{\theta+n}-\frac{k \alpha}{n}\bigr)}.
  \end{align*}
  By the property in \eqref{eq:integral_controll}, it holds that
  \begin{align*}
    \Bigl|\frac{v_{n+1, k}}{v_{n, k}} - \frac{1}{n}\Bigr| &\le 
  \sup_{\theta\in [\underline{\theta}, \bar{\theta}]} |\frac{1}{\theta+n}-\frac{1}{n}| =   \sup_{\theta\in [\underline{\theta}, \bar{\theta}]}  \Bigl|\frac{\theta}{n(n+\theta)}\Bigr|\le 
   \frac{1}{n} 
   \sup_{\theta\in [\underline{\theta}, \bar{\theta}]} \frac{|\theta|}{\theta+n}
   \\
  \Bigl|\frac{v_{n+1, k+1}}{v_{n, k}}-\frac{k}{n}\alpha\Bigr| &\le   
  \sup_{\theta\in [\underline{\theta}, \bar{\theta}]} \Bigl|\frac{\theta+k \alpha}{\theta+n}-\frac{k\alpha}{n}\Bigr| = 
    \sup_{\theta\in [\underline{\theta}, \bar{\theta}]} \Bigl|
      \frac{(n- k \alpha)\theta}{n(\theta+n)} 
    \Bigr|
    \le \sup_{\theta\in [\underline{\theta}, \bar{\theta}]} \frac{|\theta|}{\theta+n}
  \end{align*}
With $\sup_{\theta\in [\underline{\theta}, \bar{\theta}]} \frac{|\theta|}{\theta+n} \le 
  \frac{\bar\theta-\underline\theta}{n+\underline{\theta}}$, we complete the proof.  
\end{proof}

\subsection{Dominated convergence theorem for empirical measure $\sum_{j=1}^n \frac{\mathsf{k}_{n,j}}{\mathsf{k}_n} \delta_j$}

Given a partition $(U_1, U_2, \dots )$ of $[n]$, let $\mathsf{k}_{n,j}$ be the number of blocks of size $j$ and let $\mathsf{k}_n = \sum_{j=1}^n \mathsf{k}_{n,j}$ be the number of non-empty blocks.
Since $\mathsf{k}_{n,j}=0$ for any $j\ge n+1$, this induces the following empirical measure over $\mathbb{N}$:
$$
   \sum_{j=1}^\infty  \frac{\mathsf{k}_{n,j}}{\mathsf{k}_n} \delta_j.
$$
By equation \eqref{eq:intro_powerlaw}, this empirical measure converges to the discrete distribution $\mathfrak{p}_\alpha(j)$ pointwisely, i.e.,
$$
\frac{\mathsf{k}_{n,j}}{\mathsf{k}_n} \asconv \mathfrak{p}_\alpha(j) = \frac{\alpha\prod_{i=1}^{j-1}(i-\alpha)}{j!}.
$$
During proofs in later section, we will see many terms of the integral $\sum_{j=1}^\infty \frac{\mathsf{k}_{n,j}}{\mathsf{k}_n} f(j)$ for some function $f:\mathbb{N}\to\mathbb{R}$. The next lemma claims a dominated convergence-type theorem, i.e., if the test function $f$ is bounded, then the integral also converges almost surely. 
\begin{lemma}[Dominated convergence theorem]\label{lm:bounded_conv}
  As $n\to+\infty$, it holds that 
  $$\sum_{j=1}^\infty \Bigl|\frac{\mathsf{k}_{n,j}}{\mathsf{k}_n} - \mathfrak{p}_\alpha(j)\Bigr|\asconv 0$$
  Therefore, for any bounded function $f:\mathbb{N}\to\mathbb{R}$, we have
  $$
  \sum_{j=1}^n \frac{\mathsf{k}_{n,j}}{\mathsf{k}_n} f(j) \asconv \sum_{j=1}^\infty \mathfrak{p}_\alpha(j) f(j). 
  $$
\end{lemma}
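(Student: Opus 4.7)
My plan is to recognize the first statement as an instance of Scheffé's lemma applied pathwise. The key observations are that both $(\mathsf{k}_{n,j}/\mathsf{k}_n)_{j\ge 1}$ and $(\mathfrak{p}_\alpha(j))_{j\ge 1}$ are probability mass functions on $\mathbb{N}$ --- the former because $\sum_{j=1}^n \mathsf{k}_{n,j} = \mathsf{k}_n$ by definition, and the latter because $\mathfrak{p}_\alpha$ is the Sibuya pmf as noted in the introduction --- and that \eqref{eq:intro_powerlaw} already supplies pointwise almost-sure convergence $\mathsf{k}_{n,j}/\mathsf{k}_n \asconv \mathfrak{p}_\alpha(j)$ for each fixed $j$.

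The first step is to upgrade this convergence from ``for each $j$, a.s.'' to ``a.s., for every $j$'' by taking a countable intersection of null sets, yielding a single event of probability one on which the pointwise convergence holds simultaneously in $j \in \mathbb{N}$. On this event I would apply Scheffé's lemma pathwise in the following way: introduce the nonnegative quantity $a_{n,j} \equiv \bigl(\mathfrak{p}_\alpha(j) - \mathsf{k}_{n,j}/\mathsf{k}_n\bigr)_+$, which is dominated by the summable envelope $\mathfrak{p}_\alpha(j)$ and tends pointwise to zero, and invoke the dominated convergence theorem for counting measure on $\mathbb{N}$ to conclude $\sum_{j=1}^\infty a_{n,j} \to 0$. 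Because both pmfs sum to one, the identity $\sum_{j=1}^\infty \bigl(\mathfrak{p}_\alpha(j) - \mathsf{k}_{n,j}/\mathsf{k}_n\bigr) = 0$ forces positive and negative parts to carry equal total mass, so
$$
\sum_{j=1}^\infty \Bigl|\frac{\mathsf{k}_{n,j}}{\mathsf{k}_n} - \mathfrak{p}_\alpha(j)\Bigr| = 2 \sum_{j=1}^\infty a_{n,j} \asconv 0,
$$
which is exactly the first claim.

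For the second assertion, since $\mathsf{k}_{n,j} = 0$ whenever $j > n$, the triangle inequality gives
$$
\Bigl|\sum_{j=1}^n \frac{\mathsf{k}_{n,j}}{\mathsf{k}_n} f(j) - \sum_{j=1}^\infty \mathfrak{p}_\alpha(j) f(j)\Bigr| \;\le\; \|f\|_\infty \sum_{j=1}^\infty \Bigl|\frac{\mathsf{k}_{n,j}}{\mathsf{k}_n} - \mathfrak{p}_\alpha(j)\Bigr|,
$$
which tends to zero almost surely by the first part. I do not anticipate any genuine obstacle here; the only mild subtlety is the countable-intersection argument required to promote the pointwise-in-$j$ a.s.\ convergence from \eqref{eq:intro_powerlaw} into a simultaneous-in-$j$ statement on a common full-measure event, which is what licenses the pathwise application of Scheffé.
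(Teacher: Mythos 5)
Your proof is correct and matches the paper's approach exactly: the paper likewise invokes Scheffé's lemma using the pointwise a.s.\ convergence \eqref{eq:intro_powerlaw} and the fact that both $(\mathsf{k}_{n,j}/\mathsf{k}_n)_j$ and $(\mathfrak{p}_\alpha(j))_j$ are probability mass functions on $\mathbb{N}$. The only difference is that you spell out the standard positive-part/dominated-convergence argument behind Scheffé and the countable-intersection step, whereas the paper cites Scheffé's lemma as a black box.
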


\begin{proof}
This result follows from Scheff\'e's Lemma applied with 
 the point-wise convergence $\mathsf{k}_{n,j}/\mathsf{k}_n \asconv \mathfrak{p}_\alpha(j)$ (a.s.) and the fact that $(\mathsf{k}_{n,j}/\mathsf{k}_n)_{j=1}^\infty$ and $(\mathfrak{p}_\alpha(j))_{j=1}^\infty$ are probability measures on $\mathbb{N}$. 
\end{proof}

\section{$\alpha$-diversity and Fisher Information}

\subsection{Proof of \Cref{thm:Lp_converge}}
The proof in this section is inspired by \cite{bercu2024martingale}, which focuses on the case $\mu=\delta_\theta$, 
 and we borrow several notations from the paper. 
Let $\mathcal{F}_n$ be the $\sigma$-filed generated by the random partition of $[n]$, i.e., $(U_1, \dots \cdots U_{\mathsf{k}_n})\in \mathcal{P}_n$. 
By the definition, the number of nonempty blocks $\mathsf{k}_n$ can be written as a summation of Bernoulli random variables as follows:
\begin{align*}
  \mathsf{k}_{n+1} = \mathsf{k}_n + \xi_{n+1}, \quad \xi_{n+1}|\mathcal{F}_n\sim \Bernoulli(p_n), \quad   p_n\equiv \frac{v_{n+1, \mathsf{k}_{n}+1}}{v_{n, \mathsf{k}_n}}.
\end{align*}
By \Cref{assumption}, the success probability $p_n$ can be expressed as an integral of tilted measure:
\begin{align*}
   p_n = \frac{\int\dd \mu(\theta) v_{n,\mathsf{k}_n}(\alpha, \theta) \frac{\theta+\mathsf{k}_n\alpha}{\theta + n}}{\int \dd \mu(\theta) v_{n, \mathsf{k}_n}(\alpha,\theta)} =\int \dd \mu_{n, \mathsf{k}_n}(\theta) \frac{\theta+\mathsf{k}_n\alpha}{\theta + n}. 
\end{align*}
Here $\dd \mu_{n,\mathsf{k}_n}$ is the tilted probability measure \eqref{eq:tilted_measure} with $k=\mathsf{k}_n$.  
Here, the integrand $(\theta+\mathsf{k}_n\alpha)/(\theta+n)$ is increasing in $\theta\in (-\alpha, +\infty)$ by $\alpha \mathsf{k}_n < 1 \cdot n$. Thus, using the property of \eqref{eq:integral_controll}, we get 
\begin{align}\label{eq:p_n_inequality}
 \underline{p}_n \equiv \frac{\underline\theta + \alpha \mathsf{k}_n}{\underline\theta + n}  \le p_n \le  \frac{\bar\theta + \alpha \mathsf{k}_n}{\bar\theta + n} =: \bar{p}_n
\end{align}
Then, using the estimate $p_n\le \bar{p}_n$, 
the conditional expectation $  \E[\mathsf{k}_{n+1}|\mathcal{F}_n]$ is bounded from above as 
\begin{align*}
  \E[\mathsf{k}_{n+1}|\mathcal{F}_n] 
  &=\mathsf{k}_n + p_n &&\mathsf{k}_{n+1}=\mathsf{k}_n + \xi_{n+1}, \ \xi_{n+1}|\mathcal{F}_n\sim \Bernoulli(p_n)\\
  &\le  \mathsf{k}_n + \frac{\bar{\theta}+\mathsf{k}_n\alpha}{\bar{\theta}+n} && p_n \le \bar{p}_n=\frac{\bar\theta + \alpha \mathsf{k}_n}{\bar\theta + n}. 
\end{align*}
Rearranging the above display, we get 
\begin{align*}
  \E\Bigl[\mathsf{k}_{n+1} + \frac{\bar{\theta}}{\alpha}\mid \mathcal{F}_n\Bigr] &\le\Bigl(\mathsf{k}_n + \frac{\bar{\theta}}{\alpha}\Bigr) \Bigl(
    1 + \frac{\alpha}{\bar{\theta}+n}
  \Bigr). 
\end{align*}
This means that $\bar M_n$ defined by 
\begin{align}\label{eq:M_n_upper}
  \bar M_n  \equiv \bar b_n(\mathsf{k}_n+\frac{\bar\theta}\alpha) \quad \text{with} \quad \bar b_n \equiv \prod_{k=1}^{n-1} \Bigl(
      1 + \frac{\alpha}{\bar{\theta}+k} 
    \Bigr)^{-1} = \prod_{k=1}^{n-1} \Bigl(
  \frac{\bar{\theta}+k}{\bar{\theta}+k + \alpha}
    \Bigr)
\end{align}
is a non-negative super martingale, i.e., 
$$
\bar M_{n} \ge 0, \quad 
\E[\bar M_{n+1} |\mathcal{F}_n] \le \bar{M}_n.
$$ 
Then, by Doob's martingale convergence theorem (cf. \cite[Theorem 4.2.12]{DU19}), there exists a positive random variable $\bar M_\infty$ such that 
$$
\bar M_n \asconv \bar M_\infty, \quad  \bar M_\infty\in L_1. 
$$
With $\Gamma(n+\alpha)/\Gamma(n)\sim n^\alpha$ for any $\alpha\in \mathbb{R}$, the coefficient $\bar{b}_n$ appearing in \eqref{eq:M_n_upper} behave asymptotically as 
\begin{align}\label{eq:b_upper_asym}
\bar{b}_n = \frac{\Gamma(\bar\theta+\alpha+1)}{\Gamma(\bar\theta+1)} \frac{\Gamma(\bar\theta+n)}{\Gamma(\bar\theta+\alpha+n)}  \sim n^{-\alpha} \frac{\Gamma(\bar\theta + \alpha+1)}{\Gamma(\bar\theta+1)}.   
\end{align}
Combined with the definition $\bar{M}_n = \bar b_n (\mathsf{k}_n + \bar\theta/\alpha)$ and $\bar{M}_n \asconv\bar{M}_\infty\in L_1$, we get 
\begin{align}\label{eq:Kn_as_conv}
n^{-\alpha} \mathsf{k}_n  \asconv \diversity \equiv \frac{\Gamma(\bar{\theta}+1)}{\Gamma(\bar{\theta}+\alpha+1)} \bar M_\infty\in L_1. 
\end{align}
Furthermore, since $\bar M_n$ is a super martingale with $\bar M_1 = 1+\frac{\bar\theta}{\alpha}$, it holds that 
\begin{align*}
  b_n \E[\mathsf{k}_n+\frac{\bar\theta}{\alpha}] = \E[\bar M_{n}] \le \E[\bar M_1] = 1+\frac{\bar\theta}{\alpha}. 
\end{align*}
Combined with \eqref{eq:b_upper_asym}, we obtain 
the upper bound of $\E[\mathsf{k}_n]$:
\begin{align}\label{eq:Kn_moment_upper_bound}
\E[\mathsf{k}_n] &\le \bar{b}_n^{-1} \Bigl(1+\frac{\bar\theta}{\alpha}\Bigr) - \frac{\bar\theta}{\alpha} \sim n^\alpha \frac{\Gamma(\bar\theta+1)} {\Gamma(\bar\theta+\alpha+1)}\Bigl(1+\frac{\bar\theta}{\alpha}\Bigr)  = n^\alpha \frac{\Gamma(\bar\theta+1)}{\alpha \Gamma(\bar\theta+\alpha)}.
\end{align}

Next, we will show $n^{-\alpha} \mathsf{k}_n\to\diversity $ in $L^2$ (stronger than \eqref{eq:Kn_as_conv}) by constructing a sub-martingale and applying Doob's maximum $L_p$ inequality. Now, similarly to \eqref{eq:M_n_upper}, we define $\underline{M}_n$ as 
$$
\underline{M}_n = \underline b_n(\mathsf{k}_n+\frac{\underline\theta}\alpha),  \quad  \underline b_n \equiv  \prod_{k=1}^{n-1} \Bigl(
\frac{\underline{\theta}+k}{\underline{\theta}+k + \alpha}
  \Bigr) = \frac{\Gamma(\underline\theta+\alpha+1)}{\Gamma(\underline\theta+1)} \frac{\Gamma(\underline\theta+n)}{\Gamma(\underline\theta+\alpha+n)}
$$
Then, by the same argument for $\bar{M}_n$, now using the lower estimate $\underline{p}_n\le p_n$ in \eqref{eq:p_n_inequality}, 
 one can show that  $\underline{M}_n$ is non-negative sub martingale, i.e., 
$$
\underline{M}_n\ge 0, \quad 
\E[\underline{M}_{n+1}|\mathcal{F}_n] \ge \underline M_n. 
$$
Furthermore, by the same argument for $\bar{b}_n$ in \eqref{eq:b_upper_asym}, the leading term of $\underbar{b}_n$ is given by 
\begin{align}\label{eq:b_lower_asym}
\underline{b}_n \sim 
n^{-\alpha} \frac{\Gamma(\underline\theta + \alpha+1)}{\Gamma(\underline \theta+1)}. 
\end{align}
By the same argument in \eqref{eq:Kn_moment_upper_bound}, now using the sub-martingale property of $\bar{M}_n$, we also get the lower bound of $\E[\mathsf{k}_n]$:
\begin{align}\label{eq:Kn_moment_lower_bound}
\E[\mathsf{k}_n] &\gtrsim n^\alpha \frac{\Gamma(\underline\theta+1)}{\alpha \Gamma(\underline\theta+\alpha)}.
\end{align}
Combining  \eqref{eq:Kn_as_conv},  \eqref{eq:b_lower_asym}, and $\underline{M}_n = \underline b_n(\mathsf{k}_n+\frac{\underline\theta}\alpha)$, we obtain
\begin{align*}
  \underline{M}_n \sim  \frac{\Gamma(\underline\theta + \alpha+1)}{\Gamma(\underline \theta+1)}  n^{-\alpha} \Bigl(\mathsf{k}_n + \frac{\underline{\theta}}{\alpha}\Bigr)  \asconv \underline{M}_\infty \equiv \frac{\Gamma(\underline\theta + \alpha+1)}{\Gamma(\underline \theta+1)} \diversity \in L_1. 
\end{align*}
Next, we will show that the non-negative sub martingale $\underline{M}_n$ is bounded in $L_2$, i.e., $\sup_{n\ge 1} \|\underline{M}_n\|_2 < +\infty$.  Note that the increment of $\bar{M}_n$ can be written as 
\begin{align*}
  \underline{M}_{n+1} - \underline{M}_n &= \underline{b}_{n+1} (\mathsf{k}_{n+1} + \frac{\underline \theta}{\alpha}) - \underline{b}_{n} (\editline{\mathsf{k}_n} + \frac{\underline \theta}{\alpha})\\
  &= \underline{b}_{n+1} (\editline{\mathsf{k}_n} + \xi_{n+1} + \frac{\underline \theta}{\alpha}) - \underline{b}_{n} (\editline{\mathsf{k}_n} + \frac{\underline \theta}{\alpha})\\
  &= \underline{b}_{n+1} \Bigl(
  \xi_{n+1} - (\frac{\underline{b}_n}{\underline{b}_{n+1}}-1) (\mathsf{k}_n + \frac{\underline{\theta}}{\alpha}) 
  \Bigr)\\
  &= \underline{b}_{n+1} \Bigl(
  \xi_{n+1} - \frac{\alpha}{n+\underline{\theta}} (\mathsf{k}_n + \frac{\underline{\theta}}{\alpha}) 
  \Bigr) && \underline b_n = \prod_{k=1}^{n-1} \Bigl(
\frac{\underline{\theta}+k}{\underline{\theta}+k + \alpha}
  \Bigr)  \\
  &= \underline{b}_{n+1} \Bigl(
  \xi_{n+1} - \underline{p}_{n}
  \Bigr) && \underline{p} = \frac{\alpha \mathsf{k}_n + \underline{\theta}}{n+\underline{\theta}}. 
\end{align*}
Then, we have 
\begin{align}
  \E[(\underline M_{n}-\underline M_1)^2] &= \E\Bigl[\Bigl(\sum_{i=1}^{n-1}(\underline M_{i+1}-\underline M_{i})\Bigr)^2\Bigr]\nonumber \\
  &= \sum_{k=1}^{n-1} \underline{b}_{k+1}^2 \E[(
  \xi_{k+1} - \underline{p}_{k}
)^2]  +  2 \sum_{1\le k<\ell\le n-1} \underline{b}_{k+1} \underline{b}_{\ell+1} \E\bigl[(
  \xi_{k+1} - \underline{p}_{k}
  )  (
  \xi_{\ell+1} - \underline{p}_{\ell}
  )  \bigr]\label{eq:quadratic_variation_Mn}
\end{align}
Now we claim $\sup_{n\ge 1}\E[(M_{n}-M_1)^2] <+\infty$. First, let us derive an upper bound of the first term. By the tower property, as $k\to+\infty$, 
\begin{align*}
  \E[(\xi_{k+1}-\underline{p}_k)^2] &=   \E[\E[(\xi_{k+1}-\underline{p}_k)^2|\mathcal{F}_k]]\\
  &= \E[p_k(1-\underline{p}_k)^2 + (1-p_k)\underline{p}_k^2] && \xi_{k+1}|\mathcal{F}_k \sim \Bernoulli(p_k)\\
  &\le 2 \E[\bar p_k]  && 0\le \underline{p}_k \le p_k \le \bar{p}_k \le 1  \\
  &= 2 \frac{\bar\theta  + \alpha \E[\mathsf{k}_k]}{k+\bar \theta}\\
  &\lesssim k^{-1+\alpha} && \text{$\E[\mathsf{k}_n]\lesssim n^\alpha$ from \eqref{eq:Kn_moment_upper_bound}}
\end{align*}
Combined with $\underline{b}_n \asymp n^{-\alpha}$ from \eqref{eq:b_lower_asym}, we have that, as $n\to+\infty$, 
\begin{align}
  \sum_{k=1}^{n-1} \underline{b}_{k+1}^2 \E\bigl[(
  \xi_{k+1} - \underline{p}_{k}
)^2 \bigr] \lesssim \sum_{k=1}^{n-1} k^{-2\alpha} k^{-1+\alpha} = \sum_{k=1}^{n-1} k^{-(1+\alpha)} = O(1). \label{eq:first_O1}
\end{align}
Next, let us bound the cross term $\underline{b}_{k+1} \underline{b}_{\ell+1} \E[(
  \xi_{k+1} - \underline{p}_{k}
)(
  \xi_{\ell+1} - \underline{p}_{\ell}
)]$. By $\underline p_n \le p_n \le \bar p_n$ and $\E[\xi_{n+1}|\mathcal{F}_n]=p_n$, 
  \begin{align*}
    \E[\xi_{n+1}-\underline{p}_n|\mathcal{F}_n] = \E[p_n - \underline{p}_n] \ge 0
  \end{align*}
  and
  \begin{align*}
p_n - \underline{p}_n 
&\le \bar p_n - \underline{p}_n \\
&= \frac{\bar\theta + \alpha \mathsf{k}_n}{\bar\theta + n} - \frac{\underline\theta + \alpha \mathsf{k}_n}{\underline \theta + n}\\
&= \frac{\bar\theta-\underline\theta}{(\bar\theta + n)(\underline{\theta}+n)}(n-\alpha \mathsf{k}_n)\\
&\le \frac{\bar\theta-\underline\theta}{(\underline{\theta}+n)} && 0\le \alpha \mathsf{k}_n < n,\  \bar\theta\ge 0
\end{align*}
for all $n$. 
Let us fix $k < \ell$. With $k+1\le \ell$, using the tower property, 
\begin{align*}
\E\bigl[(
  \xi_{k+1} - \underline{p}_{k}
  )(
  \xi_{\ell+1} - \underline{p}_{\ell}
  )|\mathcal{F}_{k+1}\bigr]
  &= (
  \xi_{k+1} - \underline{p}_{k}
  )\E\Bigl[(
  \xi_{\ell+1} - \underline{p}_{\ell}
  )|\mathcal{F}_{k+1}\Bigr] \\
  &= (
  \xi_{k+1} - \underline{p}_{k}
  )\E\Bigl[\E\bigl[(
  \xi_{\ell+1} - \underline{p}_{\ell}
  )|\mathcal{F}_{\ell}\bigr]|\mathcal{F}_{k+1}\Bigr]  \\
  &=  (
  \xi_{k+1} - \underline{p}_{k}
  )\E\bigl[(p_{\ell} - \underline{p}_{\ell}
  )|\mathcal{F}_{k+1}\bigr]   
\end{align*}
so 
\begin{align*}
  \Bigl|\E\bigl[(
  \xi_{k+1} - \underline{p}_{k}
  )(
  \xi_{\ell+1} - \underline{p}_{\ell}
  )|\mathcal{F}_{k+1}\bigr] \Bigr|
  &\le  \bigl|
  \xi_{k+1} - \underline{p}_{k}
  \bigr|\E\bigl[|p_{\ell} - \underline{p}_{\ell}
  | \mid \mathcal{F}_{k+1}\bigr] \le  \bigl|
  \xi_{k+1} - \underline{p}_{k}
  \bigr| \frac{\bar\theta-\underline\theta}{\underline{\theta}+\ell}
\end{align*}
This gives 
\begin{align*}
  \left|\E\bigl[(
  \xi_{k+1} - \underline{p}_{k}
  )(
  \xi_{\ell+1} - \underline{p}_{\ell}
  )\bigr]\right| &= \left|\E\Bigl[\E\bigl[(
  \xi_{k+1} - \underline{p}_{k}
  )(
  \xi_{\ell+1} - \underline{p}_{\ell}
  )|\mathcal{F}_{k+1}\bigr]\Bigr]\right|\\
  &\le  \E\Bigl[\Bigl|\E[(
  \xi_{k+1} - \underline{p}_{k}
  )(
  \xi_{\ell+1} - \underline{p}_{\ell}
  )|\mathcal{F}_{k+1}]\Bigr|\Bigr]\\
  &\le \E[|
  \xi_{k+1} - \underline{p}_{k}
  |] \cdot \frac{\bar\theta-\underline\theta}{\underline{\theta}+\ell}
\end{align*}
where 
\begin{align*}
   \E[|
  \xi_{k+1} - \underline{p}_{k}
  |] &=  \E[\E[|
  \xi_{k+1} - \underline{p}_{k}
  |\mid \mathcal{F}_k]]\\
  &=\E[p_k (1-\underline{p}_k) + (1-p_k) \underline{p}_k]\\
  &\le 2 \E[\underline{p}_k]\\
  &= 2 \frac{\bar\theta  + \alpha \E[\mathsf{k}_k]}{k+\bar \theta}.
\end{align*}
Therefore, the cross term is upper bounded as 
\begin{align*}
  \left|\sum_{1\le k<\ell\le n-1}\underline{b}_{k+1} \underline{b}_{\ell+1} \E[(
  \xi_{k+1} - \underline{p}_{k})(
  \xi_{\ell+1} - \underline{p}_{\ell}
 )]\right|
 &\le   \sum_{1\le k<\ell\le n-1}\underline{b}_{k+1} \underline{b}_{\ell+1} \Bigl|\E[(
  \xi_{k+1} - \underline{p}_{k})(
  \xi_{\ell+1} - \underline{p}_{\ell}
 )]\Bigr| \\
 &\le \sum_{1\le k<\ell\le n-1}\underline{b}_{k+1} \underline{b}_{\ell+1}  \frac{\bar\theta-\underline\theta}{\underline{\theta}+\ell} \cdot 2 \frac{\bar\theta  + \alpha \E[\mathsf{k}_k]}{k+\bar \theta}\\
 &=\sum_{\ell=2}^{n-1} \underline{b}_{\ell+1} \frac{\bar\theta-\underline{\theta}}{\ell+\underline{\theta}} \cdot \sum_{k=1}^{\ell-1} 2 \underline{b}_{k+1}  \cdot \frac{\bar\theta  + \alpha \E[\mathsf{k}_k]}{k+\bar \theta}
\end{align*}
Here, as $k\to+\infty$, using $\underline{b}_k\asymp k^{-\alpha}$ and $\E[\mathsf{k}_k]\lesssim k^\alpha$
$$
\underline{b}_{k+1}  \cdot \frac{\bar\theta  + \alpha \E[\mathsf{k}_k]}{k+\bar \theta} \lesssim k^{-\alpha} \frac{k^\alpha}{k}  = \frac{1}{k}, 
$$
so that  as $\ell\to+\infty$, 
$$
\sum_{k=1}^{\ell-1} 2 \underline{b}_{k+1}  \cdot \frac{\bar\theta  + \alpha \E[\mathsf{k}_k]}{k+\bar \theta} \lesssim \log \ell. 
$$
This in turn gives 
$$
\underline{b}_{\ell+1} \frac{\bar\theta-\underline{\theta}}{\ell+\underline{\theta}} \cdot \sum_{k=1}^{\ell-1} 2 \underline{b}_{k+1}  \cdot \frac{\bar\theta  + \alpha \E[\mathsf{k}_k]}{k+\bar \theta} \lesssim \ell^{-\alpha} \frac{1}{\ell} \log \ell = \frac{\log \ell}{\ell^{\alpha+1}}
$$
as $\ell\to+\infty$. Therefore, we have that, as $n\to+\infty$, 
\begin{align}
 \Bigl|\sum_{1\le k<\ell\le n-1}\underline{b}_{k+1} \underline{b}_{\ell+1} \E[(
  \xi_{k+1} - \underline{p}_{k})(
  \xi_{\ell+1} - \underline{p}_{\ell}
 )]\Bigr| 
 \lesssim \sum_{\ell=2}^{n-1}  \frac{\log \ell}{\ell^{\alpha+1}} = O(1) \label{eq:second_O1}
\end{align}
where we have used the fact that $\int_1^\infty x^{-(\alpha+1)} \log x$ is finite. 

Putting \eqref{eq:quadratic_variation_Mn}, \eqref{eq:first_O1}, and \eqref{eq:second_O1} together, we obtain 
$$
\sup_{n\ge 1}  \E[(\underline M_{n}-\underline M_1)^2] <+\infty. 
$$
With $\underline M_1=1+\frac{\underline\theta}{\alpha}$, which is a deterministic constant, we conclude that $\underline M_n$ is bounded in $L_2$. Since $\underline{M}_n$ is non-negative sub martingale, Doob's maximum $L_p$ inequality with $p=2$ (cf. \cite[Theorem 4.4.4]{DU19}) yields
\begin{align*}
  \E\Bigl[\bigl(\sup_{n\ge 1} |\underline{M}_n|\bigr)^2\Bigr] \le \Bigl(\frac{2}{2-1}\Bigr)^{2} \sup_{n\ge 1} \E[|\underline{M}_n|^2] < +\infty 
\end{align*}
On the other hand, since $\underline{M}_n$ converges to $\underline{M}_\infty$ almost surely as $n\to+\infty$, we have 
\begin{align*}
  \sup_{n\ge 1} |\underline{M}_n - \underline{M}_\infty|^2  \le \bigl(2 \sup_{n\ge 1} |\underline{M}_n|\bigr)^2.
\end{align*}
Therefore, the dominated convergence theorem gives 
$$
 \lim_{n\to+\infty} \E[|\underline{M}_n - \underline{M}_\infty|^2] =  \E[\lim_{n\to+\infty}|\underline{M}_n - \underline{M}_\infty|^2] = 0. 
$$
This concludes that $\underline{M}_n$ converges to $\underline{M}_\infty$ in $L_2$.
Moreover, since $\underline{M}_n$ is bounded in $L_2$, we also get $\underline{M}_\infty \in L_2$. 
Finally, combining 
$$
\underline{M}_n = \underline{b}_n (\mathsf{k}_n + \frac{\underline{\theta}}{\alpha}), \quad \underline{b}_n \sim n^{-\alpha} \frac{\Gamma(\underline\theta + \alpha+1)}{\Gamma(\underline \theta+1)}, \quad \underline{M}_\infty = \frac{\Gamma(\underline\theta + \alpha+1)}{\Gamma(\underline \theta+1)} \diversity, 
$$
and noting that $\underline{b}_n$ is deterministic, we conclude that $\diversity\in L_2$ and 
$n^{-\alpha} \mathsf{k}_n \to^{L^2} \diversity$.

The inequality \eqref{eq:alpha_diversity_moment_ineq} of $\E[\diversity]$  immediately follows 
from \eqref{eq:Kn_moment_upper_bound}, \eqref{eq:Kn_moment_lower_bound}, and the $L_1$ convergence $n^{-\alpha} \mathsf{k}_n \to^{L^1}\diversity$.

\subsection{Proof of \Cref{theorem:fisher}}\label{sec:proof_fisher}
Let $\ell_n(\alpha;\mu)$ be the log-likelihood given by \eqref{eq:loglikelihood}. Since the marginal distribution of the Gibbs partition is a discrete distribution over $\mathcal{P}_n$, the expectation $\E$ over the law and derivative with respect to $\alpha$ is interchangeable. As a result, it holds that 
 \begin{align*}
  \E[(\partial_\alpha \ell_n(\alpha;\mu)^2)] =
   - \E[\partial_\alpha^2 \ell_n(\alpha;\mu)],
\end{align*}
and hence it suffices to show $n^{-\alpha} \E[\partial_\alpha^2 \ell_n(\alpha;\mu)] \to -\E[\diversity]\mathfrak{i}(\alpha)$. 

Here, the first and the second derivative of the log-likelihood function are given by 
\begin{align*}
  \partial_\alpha \ell_n(\alpha;\mu) &= \frac{\partial}{\partial \alpha} \left(\log \Bigl(\int \dd \mu(\theta) \frac{\prod_{i=1}^{\mathsf{k}_n-1} (\theta + i\alpha)}{\prod_{i=1}^{n-1}(\theta + i)} \Bigr)+ \sum_{j=2}^{n} \mathsf{k}_{n,j} \sum_{i=1}^{j-1} \log(i-\alpha)\right)\\
  &=  \frac{\frac{\partial}{\partial\alpha} \int \dd \mu(\theta) \frac{\prod_{i=1}^{\mathsf{k}_n-1} (\theta + i\alpha)}{\prod_{i=1}^{n-1}(\theta + i)}}{\int \dd \mu(\theta) \frac{\prod_{i=1}^{\mathsf{k}_n-1} (\theta + i\alpha)}{\prod_{i=1}^{n-1}(\theta + i)}} - \sum_{j=2}^{n} \mathsf{k}_{n,j} \sum_{i=1}^{j-1} \frac{1}{i-\alpha}, \\
  \partial_\alpha^2 \ell_n(\alpha;\mu)
  &=  \frac{\frac{\partial^2}{\partial\alpha^2} \int \dd \mu(\theta) \frac{\prod_{i=1}^{\mathsf{k}_n-1} (\theta + i\alpha)}{\prod_{i=1}^{n-1}(\theta + i)}}{\int \dd \mu(\theta) \frac{\prod_{i=1}^{\mathsf{k}_n-1} (\theta + i\alpha)}{\prod_{i=1}^{n-1}(\theta + i)}} - \left(\frac{\frac{\partial}{\partial\alpha} \int \dd \mu(\theta) \frac{\prod_{i=1}^{\mathsf{k}_n-1} (\theta + i\alpha)}{\prod_{i=1}^{n-1}(\theta + i)}}{\int \dd \mu(\theta) \frac{\prod_{i=1}^{\mathsf{k}_n-1} (\theta + i\alpha)}{\prod_{i=1}^{n-1}(\theta + i)}}\right)^2 - \sum_{j=1}^n \mathsf{k}_{n,j} \sum_{i=1}^{j-1}\frac{1}{(i-\alpha)^2}, 
\end{align*}
Now we claim that the derivative $\partial_\alpha, \partial_\alpha^2$ and the integration $\int\dd\mu$ appearing above are interchangeable. This is because the first and second derivative of the integrand $\frac{\prod_{i=1}^{\mathsf{k}_n-1} (\theta + i\alpha)}{\prod_{i=1}^{n-1}(\theta + i)}$ with respect to $\alpha$ are given by 
  \begin{align*}
    \frac{\partial}{\partial \alpha} \Bigl(\frac{\prod_{i=1}^{\mathsf{k}_n-1} (\theta + i\alpha)}{\prod_{i=1}^{n-1}(\theta + i)}\Bigr) &= \frac{\prod_{i=1}^{\mathsf{k}_n-1} (\theta + i\alpha)}{\prod_{i=1}^{n-1}(\theta + i)} \sum_{i=1}^{\mathsf{k}_n-1} \frac{i}{\theta+i\alpha}\\
    \frac{\partial^2}{\partial \alpha^2} \Bigl(\frac{\prod_{i=1}^{\mathsf{k}_n-1} (\theta + i\alpha)}{\prod_{i=1}^{n-1}(\theta + i)}\Bigr) &= \frac{\prod_{i=1}^{\mathsf{k}_n-1} (\theta + i\alpha)}{\prod_{i=1}^{n-1}(\theta + i)} \Bigl(\sum_{i=1}^{\mathsf{k}_n-1} \frac{i}{\theta+i\alpha}\Bigr)^2  - \frac{\prod_{i=1}^{\mathsf{k}_n-1} (\theta + i\alpha)}{\prod_{i=1}^{n-1}(\theta + i)} \sum_{i=1}^{\mathsf{k}_n-1} \frac{i^2}{(\theta+i\alpha)^2}.
  \end{align*}
By \Cref{assumption} 
the support of $\mu$ is contained in $[\underline{\theta}, \bar{\theta}]$ with $-\alpha< \underline{\theta} \editline{ \le } \bar{\theta} <+\infty$, while the second derivative above is continuous in $\theta$ over the support $[\underline{\theta}, \bar\theta]$.  
Therefore, the derivative $\partial_\alpha, \partial_\alpha^2$ and the integration $\int\dd\mu$ are interchangeable. 

Substituting the derivative formula of $ \partial_\alpha^2$ to the previous display of $\partial_\alpha^2 \ell_n(\alpha; \mu)$, changing the order of $\partial_\alpha^2$ and $\int \dd\mu$, using the definition of tilted measure $\dd\mu_{n,\mathsf{k}_n}$, we get 
    \begin{align*}
    &\partial_\alpha^2 \ell_n(\alpha; \mu)\\
    &= \int \dd \mu_{n,\mathsf{k}_n}(\theta) \Bigl(\sum_{i=1}^{\mathsf{k}_n-1} \frac{i}{\theta+i\alpha}\Bigr)^2 - \int \dd\mu_{n,\mathsf{k}_n}(\theta) \sum_{i=1}^{\mathsf{k}_n-1} \frac{i^2}{(\theta+i\alpha)^2}
  - \Bigl(\int \dd\mu_{n,\mathsf{k}_n}(\theta) \sum_{i=1}^{\mathsf{k}_n-1} \frac{i}{\theta+i\alpha}\Bigr)^2\\
  &
   -  \sum_{j=1}^n \mathsf{k}_{n,j} \sum_{i=1}^{j-1} \frac{1}{(i-\alpha)^2}\\
   &= \mathcal{V}_{\theta\sim \mu_{n,\mathsf{k}_n}}\Bigl[\sum_{i=1}^{\mathsf{k}_n-1} \frac{i}{\theta + i\alpha}\Bigr] - \mathcal{E}_{\theta\sim \mu_{n,\mathsf{k}_n}}\Bigl[ \sum_{i=1}^{\mathsf{k}_n-1}\frac{i^2}{(\theta+i\alpha)^2} \Bigr] -  \sum_{j=1}^n \mathsf{k}_{n,j} \sum_{i=1}^{j-1} \frac{1}{(i-\alpha)^2},
  \end{align*}
  where we have defined the integral operator $\mathcal{E}$ and variance operator $\mathcal{V}$ under $\mu_{n, \mathsf{k}_n}$ as follows:
  $$
  \mathcal{E}_{\theta\sim \mu_{n,\mathsf{k}_n}} [f(\theta)] \equiv \int \dd\mu_{n}(\theta) f(\theta), \quad 
  \mathcal{V}_{\theta\sim \mu_{n,\mathsf{k}_n}} [f(\theta)] \equiv \mathcal{E}_{\theta\sim \mu_{n,\mathsf{k}_n}}[f(\theta)^2] - \Bigl(\mathcal{E}_{\theta\sim \mu_{n,\mathsf{k}_n}}[f(\theta)] \Bigr)^2. 
  $$
  In particular, if we take the Dirac measure $\mu=\delta_0$, noting that $\mu_{n,\mathsf{k}_n}=\delta_0$, we get 
\begin{align}\label{eq:ell_n_0_second}
    \partial_\alpha^2 \ell_n(\alpha; \delta_0) = 0  -\frac{\mathsf{k}_n-1}{\alpha^2} - \sum_{j=1}^n \mathsf{k}_{n,j} \sum_{i=1}^{j-1} \frac{1}{(i-\alpha)^2}. 
\end{align}
  Taking the difference between $\partial_\alpha^2 \ell_n(\alpha; \delta_0)$ and $\partial_\alpha^2 \ell_n(\alpha; \mu)$,  we have 
  \begin{align*}
    -\partial_\alpha^2 \ell_n(\alpha;\mu) + \partial_{\alpha}^2 \ell_{n}(\alpha; \delta_0) = -\mathcal{V}_{\theta\sim \mu_{n,\mathsf{k}_n}}\Bigl[\sum_{i=1}^{\mathsf{k}_n-1} \frac{i}{\theta + i\alpha}\Bigr] + \mathcal{E}_{\theta\sim \mu_{n,\mathsf{k}_n}}\Bigl[ \sum_{i=1}^{\mathsf{k}_n-1}\Bigl(\frac{i^2}{(\theta+i\alpha)^2}-\frac{1}{\alpha^2}\Bigr)\Bigr]. 
  \end{align*}
  Let us bound the variance term. Since the integrand can be decomposed as 
  $$
  \sum_{i=1}^{\mathsf{k}_n-1} \frac{i}{\theta+i\alpha} = \frac{1}{\alpha} - \sum_{i=1}^{\mathsf{k}_n-1} \frac{\theta}{\alpha(\theta+i\alpha)},
  $$
  where $1/\alpha$ is constant (with respect to $\theta\sim\mu_{n,\mathsf{k}_n}$), we have
  \begin{align*}
  0\le \mathcal{V}_{\theta}\Bigl(
    \sum_{i=1}^{\mathsf{k}_n-1} \frac{i}{\theta+i\alpha}
    \Bigr) &=  \mathcal{V}_{\theta}\Bigl(
      \sum_{i=1}^{\mathsf{k}_n-1} \frac{\theta}{\alpha(\theta+i\alpha)}
      \Bigr) \le \mathcal{E}_{\theta\sim \mu_{n,\mathsf{k}_n}} \Bigl[\Bigl(
        \sum_{i=1}^{\mathsf{k}_n-1} \frac{\theta}{\alpha(\theta+i\alpha)}
      \Bigr)^2 \Bigr]
  \end{align*}
  This leads to 
  \begin{align*}
    \bigl|\partial_\alpha^2 \ell_n(\alpha;\mu) - \partial_{\alpha}^2 \ell_{n}(\alpha; \delta_0)\bigr| \le \mathcal{E}_{\theta\sim \mu_{n,\mathsf{k}_n}} \Bigl[\Bigl(
      \sum_{i=1}^{\mathsf{k}_n-1} \frac{\theta}{\alpha(\theta+i\alpha)}
    \Bigr)^2 \Bigr] + \mathcal{E}_{\theta\sim \mu_{n,\mathsf{k}_n}}\Bigl[\Bigl|\sum_{i=1}^{\mathsf{k}_n-1} \Bigl(\frac{i^2}{(\theta+i\alpha)^2}-\frac{1}{\alpha^2}\Bigr)\Bigr|\Bigr] 
  \end{align*}
  Since $\operatorname{supp}(\mu_{n,\mathsf{k}_n})\in [\underline{\theta}, \bar\theta]$, there exists a deterministic constant $C(\alpha,\bar\theta, \underline{\theta})$ such that the integrands appearing in the above display are uniformly bounded as 
  \begin{align*}
    \sup_{\theta\in\operatorname{supp}(\mu_{n,\mathsf{k}_n})}
\Bigl(
      \sum_{i=1}^{\mathsf{k}_n-1} \frac{\theta}{\alpha(\theta+i\alpha)}
    \Bigr)^2 \le \Bigl(\sum_{i=1}^{\mathsf{k}_n-1} \frac{\bar\theta- \underline{\theta}}{\alpha(\underline{\theta}+i\alpha)}\Bigr)^2 \le C(\alpha,\bar\theta, \underline{\theta}) (\log n)^2
  \end{align*}
and 
\begin{align*}
   \sup_{\theta\in \operatorname{supp}(\mu_{n,\mathsf{k}_n})}  \Bigl|\sum_{i=1}^{\mathsf{k}_n-1} \Bigl(\frac{i^2}{(\theta+i\alpha)^2}-\frac{1}{\alpha^2}\Bigr)\Bigr| &=   \sup_{\theta\in \operatorname{supp}(\mu_{n,\mathsf{k}_n})}  \Bigl|\sum_{i=1}^{\mathsf{k}_n-1} \frac{-\theta^2 - 2i\alpha\theta}{\alpha^2(\theta+i\alpha)^2}\Bigr| \\
  &\le \sum_{i=1}^{\mathsf{k}_n-1} \frac{
   (\bar\theta-\underline{\theta})^2 + 2i \alpha (\bar\theta-\underline{\theta}) 
  }{\alpha^2 (\underline{\theta}+i\alpha)^2}\\
  & \le C(\alpha,\bar\theta, \underline{\theta}) \log n. 
\end{align*}
Therefore, the difference $\partial_\alpha^2 \ell_n(\alpha;\mu) - \partial_{\alpha}^2 \ell_{n}(\alpha; \delta_0)$ is bounded as 
\begin{align*}
  |\partial_\alpha^2 \ell_n(\alpha;\mu) - \partial_{\alpha}^2 \ell_{n}(\alpha; \delta_0)|
  &\le C(\alpha,\bar\theta, \underline{\theta})  \Bigl((\log n)^2 + \log n \Bigr)
\end{align*}
Since $(\log n)^2\ll n^\alpha$, in order to prove $n^{-\alpha} \E[\partial_{\alpha}^2 \ell_{n}(\alpha; \mu)] \to - \E[\diversity ]\mathfrak{i}(\alpha)$, 
it suffices to show 
$$
  n^{-\alpha} \E[\partial_{\alpha}^2 \ell_{n}(\alpha; \delta_0)] \to - \E[\diversity ]\mathfrak{i}(\alpha). 
$$
By the expression of $\partial_{\alpha}^2 \ell_{n}(\alpha; \delta_0)$ in \eqref{eq:ell_n_0_second}, 
$$
\frac{1}{n^\alpha} \partial_{\alpha}^2 \ell_{n}(\alpha; \delta_0) 
  = \frac{\mathsf{k}_n}{n^\alpha}\Bigl(
    - \frac{1-\mathsf{k}_n^{-1}}{\alpha^2} - \sum_{j=1}^n \frac{\mathsf{k}_{n,j}}{\mathsf{k}_n} \sum_{i=1}^{j-1}\frac{1}{(i-\alpha)^2}
  \Bigr).
$$
Now, combining $n^{-\alpha}\mathsf{k}_n\asconv \diversity$ and the dominated convergence theorem (\Cref{lm:bounded_conv}), noting that $j\mapsto \sum_{i=1}^{j-1}\frac{1}{(i-\alpha)^2}$ is bounded,  we have 
\begin{align*}
  \frac{1}{n^\alpha} \partial_{\alpha}^2 \ell_{n}(\alpha; \delta_0) 
  & \asconv \diversity \Bigl(-\frac{1}{\alpha^2} - \sum_{j=1}^\infty \mathfrak{p}_\alpha(j) \sum_{i=1}^{j-1}\frac{1}{(i-\alpha)^2}  \Bigr) = -\diversity \mathfrak{i}(\alpha)
\end{align*}
where the last equality follows from the formula of $\mathfrak{i}(\alpha)$ in \Cref{prop:fisher_info_sibuya}. Furthermore, with $\sum_{j=1}^n \mathsf{k}_{n,j}=\mathsf{k}_n$, $- \partial_{\alpha}^2 \ell_{n}(\alpha; \delta_0)$ is dominated by $\mathsf{k}_n$ up to a constant:
$$
0 \le - \partial_{\alpha}^2 \ell_{n}(\alpha; \delta_0) = \frac{\mathsf{k}_n-1}{\alpha^2}  + \sum_{j=1}^n \mathsf{k}_{n,j} \sum_{i=1}^{j-1}\frac{1}{(i-\alpha)^2} \le \mathsf{k}_n C,  \quad C = \frac{1}{\alpha^2} + \sum_{i=1}^\infty \frac{1}{(i-\alpha)^2}<+\infty
$$
Since $n^{-\alpha} \mathsf{k}_n$ is bounded in $L_2$ by \Cref{thm:Lp_converge}, $n^{-\alpha} \partial_\alpha^2 \ell_n(\alpha;\delta_0)$ is also bounded in $L_2$ and hence uniformly integrable. Thus, the almost sure convergence $n^{-\alpha} \partial_\alpha \ell_n(\alpha;\delta_0)\asconv -\diversity \mathfrak{i}(\alpha)$ implies the convergence in mean:
$
n^{-\alpha} \E[\partial_{\alpha}^2 \ell_{n}(\alpha; \delta_0)] \to - \E[\diversity] \mathfrak{i}(\alpha), 
$
which completes the proof. 

\section{Asymptotic analysis of QMLE}
\subsection{Proof of \Cref{prop:mle_existence}} 
Recall the expression: 
$$\mathsf{k}_{n+1} = \mathsf{k}_n + \xi_{n+1}, \quad \xi_{n+1}|\mathcal{F}_n\sim \Bernoulli(p_n)$$ 
where 
$$
p_n = \frac{v_{n+1, \mathsf{k}_{n}+1}}{v_{n, \mathsf{k}_n}}, \quad 1-p_n = 1-\frac{v_{n+1, \mathsf{k}_{n}+1}}{v_{n, \mathsf{k}_n}} \overset{(*)}{=} (n-\mathsf{k}_n\alpha) \frac{v_{n+1, \mathsf{k}_n}}{v_{n,\mathsf{k}_n}}
$$
where we have used the recursion \eqref{eq:recursion} for $(*)$. 
Then:
  \begin{align*}
      \PP(\mathsf{k}_n=1) &= \prod_{m=1}^{n-1} (m-\alpha) \frac{v_{m+1,1}}{v_{m,1}} = \prod_{m=1}^{n-1} \int \diff \mu_{m,1}(\theta) \frac{m-\alpha}{\theta+m}\\
      \PP(\mathsf{k}_n=n) &=  \prod_{m=1}^{n-1} \frac{v_{m+1,m+1}}{v_{m,m}} = \prod_{m=1}^{n-1} \int \diff \mu_{m,m}(\theta) \frac{\theta+\alpha m}{\theta+m}
  \end{align*}
  where $\mu_{m,1}$ and $\mu_{m, m}$ are the tilted probability measure defined in \eqref{eq:tilted_measure}. Using $\operatorname{supp}(\mu_{n,k})\subset [\underline\theta, \bar\theta]$ for any $1\le k\le n$ and the fact that $\theta\mapsto \frac{m-\alpha}{\theta+m}$ is decreasing and $\theta\mapsto  \frac{\theta+\alpha m}{\theta+m}$ is increasing, we get 
  \begin{align*}
      \PP(\mathsf{k}_n=1) &\le \prod_{m=1}^{n-1} \frac{m-\alpha}{m+\underline\theta} = \frac{\Gamma(n-\alpha)}{\Gamma(1-\alpha)} \frac{\Gamma(1+\underline\theta)}{\Gamma(n+\underline{\theta})}\\
     \PP(\mathsf{k}_n=n) &\le \prod_{m=1}^{n-1}\frac{\bar\theta+m\alpha}{\bar\theta+m} = \alpha^{n-1} \frac{\Gamma(\bar\theta+1)}{\Gamma(\bar\theta+n)} \frac{\Gamma(\bar\theta/\alpha +n)}{\Gamma(\bar\theta/\alpha+1)}
  \end{align*}
  With $\Gamma(n+c)/\Gamma(n)\sim n^c$ for any $c>0$, we have
  \begin{align*}
   \frac{\Gamma(n-\alpha)}{\Gamma(n+\underline{\theta})} \sim n^{-(\alpha +\underline{\theta})}, \quad 
    \frac{\Gamma(\bar\theta/\alpha +n)}{\Gamma(\bar\theta+n)} \sim n^{\bar\theta(\alpha^{-1}-1)}. 
  \end{align*}
  Putting them all together, we obtain the desired upper bound of $\PP(\mathsf{k}_n=1)$ and $\PP(\mathsf{k}_n=n)$. 

\subsection{Consistency of QMLE}
In this section, we prove the consistency of QMLE. The proof strategy is similar to  \cite[Section C]{koriyama2022asymptotic}, which studies the QMLE for the Ewens--Pitman partition $(\mu=\delta_\theta)$. Throughout of this section, we fix $\alpha\in (0,1)$.

Let us define the deterministic function $\Psi:(0,1)\to\mathbb{R}$ as  
  \begin{align}\label{eq:psi_def}
    \Psi(x) \equiv \frac{1}{x} - \sum_{j=1}^\infty \mathfrak{p}_\alpha(j) \sum_{i=1}^{j-1} \frac{1}{i-x}, \quad \text{where} \quad \mathfrak{p}_\alpha(j) = \frac{\alpha\prod_{i=1}^{j-1} (i-\alpha)}{j!}
  \end{align}
  \begin{lemma}[{\cite[Lemma B.3]{koriyama2022asymptotic}}]\label{lm:psi_property}
    The map 
    $\Psi:(0,1)\to\mathbb{R}$  defined in \eqref{eq:psi_def} has the following properties.  
    \begin{itemize}
      \item $\Psi:(0,1)\to\mathbb{R}$ is $C^1$. 
      \item $\Psi$ is strictly decreasing and 
      $$   
      \forall x\in (0,1), \quad 
      \Psi'(x)=-\frac{1}{x^2} - \sum_{j=2}^\infty \mathfrak{p}_\alpha(j) \sum_{i=1}^{j-1}\frac{1}{(i-x)^2}<0
      $$
      Furthermore, it holds that $\Psi'(\alpha)=-\mathfrak{i}(\alpha) < 0$, where $\mathfrak{i}(\alpha)$ is the Fisher Information of the discrete distribution $\mathfrak{p}_\alpha(j)$ defined as \eqref{eq:fisher_sibuya}. 
      \item $\Psi$ has its unique root at $x=\alpha$, i.e., $\Psi(\alpha)=0$
    \end{itemize}
  \end{lemma}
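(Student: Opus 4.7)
My plan is to treat the three bullet points in order, with the real work being the justification of termwise differentiation of the infinite series defining $\Psi$.

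First I would establish that both the series defining $\Psi(x)$ and the candidate derivative series
\[
  -\frac{1}{x^2} - \sum_{j=2}^\infty \mathfrak{p}_\alpha(j) \sum_{i=1}^{j-1}\frac{1}{(i-x)^2}
\]
converge uniformly on compact subsets $K \subset (0,1)$. The key ingredient is the Sibuya tail $\mathfrak{p}_\alpha(j) \sim \frac{\alpha}{\Gamma(1-\alpha)} j^{-(1+\alpha)}$ from the introduction. For any fixed compact $K \subset (0,1)$, there is $c>0$ with $i-x \ge i - (1-c)$ for all $x\in K$ and $i\ge 1$, so $\sum_{i=1}^{j-1} (i-x)^{-1} = O(\log j)$ and $\sum_{i=1}^{j-1}(i-x)^{-2} = O(1)$, both uniformly on $K$. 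Combined with the tail bound, both series are dominated by $\sum_j j^{-(1+\alpha)} \log j$ and $\sum_j j^{-(1+\alpha)}$ respectively, which are summable. This uniform convergence justifies the standard termwise differentiation lemma and produces the claimed formula for $\Psi'(x)$, as well as $C^1$ regularity (bullet points 1 and 2). Strict monotonicity is then immediate since every term in the formula for $\Psi'$ is negative. The identification $\Psi'(\alpha) = -\mathfrak{i}(\alpha)$ follows directly from the closed form for $\mathfrak{i}(\alpha)$ in \Cref{prop:fisher_info_sibuya}.

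For $\Psi(\alpha) = 0$, I would recognize $\Psi(\alpha)$ as the expected score of the Sibuya family $\mathfrak{p}_\alpha$. Indeed, $\log \mathfrak{p}_\alpha(j) = \log\alpha + \sum_{i=1}^{j-1}\log(i-\alpha) - \log j!$, so
\[
  \partial_\alpha \log \mathfrak{p}_\alpha(j) = \frac{1}{\alpha} - \sum_{i=1}^{j-1}\frac{1}{i-\alpha},
\]
and hence $\Psi(\alpha) = \sum_{j=1}^\infty \mathfrak{p}_\alpha(j)\, \partial_\alpha \log \mathfrak{p}_\alpha(j)$. Differentiating the identity $\sum_j \mathfrak{p}_\alpha(j)\equiv 1$ in $\alpha$ and exchanging the derivative with the sum gives $\Psi(\alpha)=0$. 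The exchange is justified by the same uniform-convergence argument: on any compact neighborhood of $\alpha$ in $(0,1)$, the series $\sum_j \partial_\alpha \mathfrak{p}_\alpha(j) = \sum_j \mathfrak{p}_\alpha(j)(\tfrac{1}{\alpha}-\sum_{i<j}\tfrac{1}{i-\alpha})$ is dominated by $\sum_j j^{-(1+\alpha')} \log j$ for an $\alpha'$ slightly below $\alpha$, which converges.

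The main obstacle is bookkeeping the domination estimates cleanly enough to invoke the termwise differentiation theorem and the derivative-of-partition-function identity without circularity; once that uniform convergence is in hand, every bullet point is a short computation. Uniqueness of the root at $x=\alpha$ in the third bullet is then automatic: strict monotonicity from the second bullet plus the vanishing at $\alpha$ from the computation above forces $\alpha$ to be the only zero in $(0,1)$.
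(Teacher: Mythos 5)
The paper does not prove this lemma---it cites it verbatim as Lemma~B.3 of \cite{koriyama2022asymptotic}---so there is no internal proof to compare against. Your argument is a correct, self-contained derivation. The structure is sound: you establish uniform convergence of both $\Psi$ and the candidate derivative series on compacts $K\subset(0,1)$ via the Sibuya tail $\mathfrak{p}_\alpha(j)\asymp j^{-(1+\alpha)}$, the uniform bounds $\sum_{i=1}^{j-1}(i-x)^{-1}=O(\log j)$ and $\sum_{i=1}^{j-1}(i-x)^{-2}=O(1)$ on $K$, and summability of $j^{-(1+\alpha)}\log j$; this justifies termwise differentiation, yields $C^1$ regularity, and gives the explicit $\Psi'$, whose terms are manifestly negative. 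The identification $\Psi'(\alpha)=-\mathfrak{i}(\alpha)$ matches \Cref{prop:fisher_info_sibuya} after noting the $j=1$ term vanishes. Your recognition of $\Psi(\alpha)$ as the expected score $\sum_j \mathfrak{p}_\alpha(j)\,\partial_\alpha\log\mathfrak{p}_\alpha(j)$, together with differentiating the normalization $\sum_j\mathfrak{p}_\alpha(j)=1$ under the sum (justified by the same domination, uniformly for $\alpha$ in a compact), correctly yields $\Psi(\alpha)=0$; uniqueness of the root then follows from strict monotonicity. No gaps or circularities. One minor note: you lean on \Cref{prop:fisher_info_sibuya} for the closed form of $\mathfrak{i}(\alpha)$, which is itself cited from the same source---this is fine as a shortcut, but since the defining expression $\mathfrak{i}(\alpha)=\sum_j\mathfrak{p}_\alpha(j)(\partial_\alpha\log\mathfrak{p}_\alpha(j))^2$ together with the score identity already yields $\mathfrak{i}(\alpha)=-\sum_j\mathfrak{p}_\alpha(j)\,\partial_\alpha^2\log\mathfrak{p}_\alpha(j)=\frac{1}{\alpha^2}+\sum_j\mathfrak{p}_\alpha(j)\sum_{i<j}(i-\alpha)^{-2}$, you could close that loop with one more line of the same computation.
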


  Given a random partition $\Pi_n = (U_1, U_2, \dots U_{\mathsf{k}_n})\in \mathcal{P}_n$, define the random function $\Psi_n:(0,1)\to\mathbb{R}$ as 
  \begin{align}\label{eq:psi_n_def}
    \Psi_n(x) \equiv \frac{\mathsf{k}_n-1}{ x \mathsf{k}_n} - \sum_{j=2}^n \frac{\mathsf{k}_{n,j}}{\mathsf{k}_n}\sum_{i=1}^{j-1}\frac{1}{i-x}    
  \end{align}
  where $\mathsf{k}_n$ is the number of nonempty blocks and $\mathsf{k}_{n,j}=\sum_{i=1}^{\mathsf{k}_n}\mathbbm{1}\{|U_i|=j\}$ is the number of blocks of size $j$. The next lemma claims that $\Psi_n$ is strictly decreasing and converges to the deterministic map $\Psi$ in a suitable sense. 
  \begin{lemma}\label{lm:psi_n_property}
    The random function $\Psi_n$ in \eqref{eq:psi_n_def} and the deterministic function $\Psi$ \eqref{eq:psi_def} satisfy the following:
    \begin{itemize}
      \item If $n\ge 2$, with probability $1$, 
      $-\Psi_n'(x) \ge 2^{-1}$ for all $x\in (0,1)$. 
      \item $\Psi_n(x) \to^p \Psi(x)$ for each $x\in (0,1)$. 
      \item For any fixed closed interval $I\subset (0,1)$, the uniform convergence
      $\sup_{x\in I} |\Psi_n'(x) - \Psi'(x)|=o_p(1)$ holds. 
    \end{itemize}
  \end{lemma}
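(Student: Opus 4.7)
My plan is to tackle the three claims separately: (i) and (iii) follow from direct computation combined with the bounded-convergence lemma (\Cref{lm:bounded_conv}), whereas (ii) requires a subtler argument because the relevant kernel is unbounded. For (i), differentiating yields
\[
-\Psi_n'(x)=\frac{\mathsf{k}_n-1}{x^2\mathsf{k}_n}+\sum_{j=2}^n\frac{\mathsf{k}_{n,j}}{\mathsf{k}_n}\sum_{i=1}^{j-1}\frac{1}{(i-x)^2},
\]
which I lower-bound by $1/2$ via a case split on $\mathsf{k}_n$: if $\mathsf{k}_n\ge 2$, the first term alone dominates since $(\mathsf{k}_n-1)/\mathsf{k}_n\ge 1/2$ and $1/x^2\ge 1$ on $(0,1)$; if $\mathsf{k}_n=1$, the unique block has size $n\ge 2$, so $\mathsf{k}_{n,n}/\mathsf{k}_n=1$ and the second term is at least $(1-x)^{-2}>1$. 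For (iii), the kernel $h_x(j):=\sum_{i=1}^{j-1}(i-x)^{-2}$ is uniformly bounded in $j\in\mathbb{N}$ and $x$ on any compact $I\subset(0,1)$ by $C_I:=\sup_{x\in I}\sum_{i=1}^\infty (i-x)^{-2}<\infty$, so
\[
\sup_{x\in I}|\Psi_n'(x)-\Psi'(x)|\le\frac{1}{\mathsf{k}_n\inf_{x\in I}x^2}+C_I\sum_{j=1}^\infty\Bigl|\frac{\mathsf{k}_{n,j}}{\mathsf{k}_n}-\mathfrak{p}_\alpha(j)\Bigr|\asconv 0,
\]
using \Cref{lm:bounded_conv} and $\mathsf{k}_n\to\infty$ almost surely (by \Cref{theorem:Lp_converge}).

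The delicate claim is (ii), since the natural kernel $g_x(j)=\sum_{i=1}^{j-1}(i-x)^{-1}$ grows like $\log j$ and \Cref{lm:bounded_conv} does not apply directly. My strategy is a two-step bootstrap. First I show $\Psi_n(\alpha)\to^p 0=\Psi(\alpha)$: since $\mathsf{k}_n\Psi_n(\alpha)=\partial_\alpha\ell_n(\alpha;\delta_0)$, I use the tilted-measure formula \eqref{eq:tilted_measure} together with the algebraic rewriting $\frac{i}{\theta+i\alpha}=\frac{1}{\alpha}-\frac{\theta}{\alpha(\theta+i\alpha)}$ (mirroring the derivation in \Cref{sec:proof_fisher}) to obtain
\[
\partial_\alpha\ell_n(\alpha;\delta_0)-\partial_\alpha\ell_n(\alpha;\mu)=\int \dd\mu_{n,\mathsf{k}_n}(\theta)\sum_{i=1}^{\mathsf{k}_n-1}\frac{\theta}{\alpha(\theta+i\alpha)},
\]
whose absolute value is bounded almost surely by $C(\alpha,\underline\theta,\bar\theta)\log\mathsf{k}_n$ via \eqref{eq:integral_controll}. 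Under the true law $\mu$ the score identity gives $\E[\partial_\alpha\ell_n(\alpha;\mu)]=0$, while \Cref{theorem:fisher} gives $\E[(\partial_\alpha\ell_n(\alpha;\mu))^2]\sim n^\alpha\E[\diversity]\mathfrak{i}(\alpha)$; Chebyshev then yields $\partial_\alpha\ell_n(\alpha;\mu)=O_p(n^{\alpha/2})$. Hence $\partial_\alpha\ell_n(\alpha;\delta_0)=O_p(n^{\alpha/2})+O(\log n)=O_p(n^{\alpha/2})$ and, dividing by $\mathsf{k}_n\asymp n^\alpha$ (by \Cref{theorem:Lp_converge} with $\diversity\in(0,\infty)$ a.s.), $\Psi_n(\alpha)=O_p(n^{-\alpha/2})\to^p 0$. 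Second, for arbitrary $x\in(0,1)$ I pick a compact $I\subset(0,1)$ containing both $x$ and $\alpha$ and apply the fundamental theorem of calculus,
\[
|\Psi_n(x)-\Psi(x)|\le|\Psi_n(\alpha)|+|x-\alpha|\sup_{t\in I}|\Psi_n'(t)-\Psi'(t)|,
\]
where both terms vanish in probability by the previous step and (iii).

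The main obstacle is the unbounded kernel in (ii); the FTC bootstrap, anchored at $x=\alpha$ where I can exploit the Fisher-information rate already established in \Cref{theorem:fisher} together with the score identity, is what sidesteps it cleanly.
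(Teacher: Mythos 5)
Your proof is correct, and parts (i) and (iii) are essentially identical to the paper's. The only variation is in how you extend part (ii) from the anchor point $x=\alpha$ (where both you and the paper use the score representation $\mathsf{k}_n\Psi_n(\alpha)=\partial_\alpha\ell_n(\alpha;\delta_0)$, the tilted-measure bound, and the $O_p(n^{\alpha/2})$ rate from \Cref{theorem:fisher}) to general $x\in(0,1)$: the paper computes $\Psi_n(x)-\Psi_n(\alpha)-(\Psi(x)-\Psi(\alpha))$ explicitly via the partial-fraction identity $\frac{1}{i-x}-\frac{1}{i-\alpha}=\frac{\alpha-x}{(i-x)(i-\alpha)}$, yielding a bounded kernel to which \Cref{lm:bounded_conv} applies directly, whereas you integrate $\Psi_n'-\Psi'$ over $[\alpha,x]$ and invoke (iii). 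These are really the same computation in disguise (the partial-fraction difference is exactly the FTC integral of $(i-t)^{-2}$), so the only substantive distinction is logical: your route makes claim (ii) depend on claim (iii), while the paper keeps (ii) self-contained; both are fine since (iii) is proved independently.
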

  \begin{proof}

  Let us show the lower bound of $\Psi_n'$. Note
  $$
     -\Psi_n'(x) = \frac{\mathsf{k}_n-1}{ x^2 \mathsf{k}_n} + \sum_{j=2}^n \frac{\mathsf{k}_{n,j}}{\mathsf{k}_n}\sum_{i=1}^{j-1}\frac{1}{(i-x)^2} 
  $$
  Suppose $n\ge 2$.  If $\mathsf{k}_n=1$, then we must have $\mathsf{k}_{n,j}=0$ for $j<n$ and $\mathsf{k}_{n,n}=1$. Thus, if $\mathsf{k}_n=1$, 
    $$
    \forall x\in (0,1), \quad 
      -\Psi_n'(x) = \sum_{i=1}^{n-1} \frac{1}{(i-x)^2} \ge \sum_{i=1}^{n-1} \frac{1}{i^2} \ge \frac{1}{1^2}=1
    $$
    On the other hand, if $\mathsf{k}_n\ge 2$,
    $$
    \forall x\in (0,1), \quad 
    -\Psi_n'(x) \ge \frac{\mathsf{k}_n-1}{\mathsf{k}_n} \frac{1}{x^2} \ge \frac{1}{2}\frac{1}{x^2} \ge \frac{1}{2}. 
    $$
    This completes the proof of $-\Psi_n'(x) \ge 2^{-1}$ for all $x\in(0,1)$. \\
 
    Next, we prove the pointwise convergence $\Psi_n(x) \to^p \Psi(x)$ for each $x\in (0,1)$. 
    Let us show for $x=\alpha$ first. By $\Psi(\alpha)=0$, it suffices to show $\Psi_n(\alpha) = o_p(1)$. By the formula of log-likelihood, 
    \begin{align*}
      \Psi_n(\alpha) = \frac{\partial_\alpha\ell_n(\alpha;\delta_0)}{\mathsf{k}_n} = \frac{\partial_\alpha \ell_n(\alpha;\mu)}{\mathsf{k}_n}  +  \frac{1}{\mathsf{k}_n}\int \dd \mu_{n,\mathsf{k}_n}(\theta)  \sum_{i=1}^{\mathsf{k}_n-1}\frac{\theta}{\alpha(\theta+i\alpha)},  
    \end{align*}
    Since the support of $\mu_{n, \mathsf{k}_n}$ is bounded, the second term is $O(\log \mathsf{k}_n/\mathsf{k}_n)$ as $\mathsf{k}_n\to+\infty$, which is $o_p(1)$ since $\mathsf{k}_n\to \infty$ (a.s.).  For the first term, the variance of $\partial_\alpha\ell_n(\alpha;\mu)$ is $O(n^\alpha)$, thereby $\partial_\alpha\ell_n(\alpha;\editline{\mu})=O_p(n^{\alpha/2})$. Combined with $n^{-\alpha} \mathsf{k}_n\to\diversity >0$ (a.s.), the first term is $O_p(n^{-\alpha/2})=o_p(1)$. This finishes the proof of $\Psi_n(\alpha)\to^p  \Psi(\alpha)$. For $x\ne \alpha$, by the triangle inequality, 
    \begin{align*}
      |\Psi_n(x)-\Psi(x)| \le |\Psi_n(x)-\Psi_n(\alpha)- (\Psi(x)-\Psi(\alpha))| + (\Psi_n(\alpha)-\Psi(\alpha))
    \end{align*}
    where the last term is $o_p(1)$. Note 
    \begin{align*}
      \Psi_n(x)-\Psi_n(\alpha) &= \frac{\mathsf{k}_n-1}{\mathsf{k}_n} \Bigl(\frac{1}{x}-\frac{1}{\alpha}\Bigr) - \sum_{j=2}^n \frac{\mathsf{k}_{n,j}}{\mathsf{k}_n} \sum_{i=1}^{j-1} \frac{\alpha-x}{(i-x)(i-\alpha)}\\
      \Psi(x)-\Psi(\alpha) &= \Bigl(\frac{1}{x}-\frac{1}{\alpha}\Bigr) - \sum_{j=2}^\infty \mathfrak{p}_\alpha(j) \sum_{i=1}^{j-1} \frac{\alpha-x}{(i-x)(i-\alpha)}
    \end{align*}
    and hence 
    \begin{align*}
      \Psi_n(x)-\Psi_n(\alpha)- (\Psi(x)-\Psi(\alpha)) = -\frac{1}{\mathsf{k}_n} \Bigl(\frac{1}{x}-\frac{1}{\alpha}\Bigr) + \sum_{j=2}^\infty \Bigl(\mathfrak{p}_\alpha(j)-\frac{\mathsf{k}_{n,j}}{\mathsf{k}_n}\Bigr)  \sum_{i=1}^{j-1} \frac{|\alpha-x|}{(i-x)(i-\alpha)}. 
    \end{align*}
    The first term is $o_p(1)$ by $\mathsf{k}_n\to+\infty$ and the second term is $o_p(1)$ by \Cref{lm:bounded_conv} and the fact that $j\mapsto \sum_{i=1}^{j-1} \frac{|\alpha-x|}{(i-x)(i-\alpha)}$ is bounded. 
    This completes the proof of $\Psi_n(x) \to^p \Psi(x)$ for each $x\in (0,1)$.\\
    
    Finally, we prove the uniform convergence $\sup_{x\in I} |\Psi_n'(x) - \Psi'(x)| =o_p(1)$. Note
    \begin{align*}
      \Psi_n'(x) = - \frac{\mathsf{k}_n-1}{ x^2 \mathsf{k}_n} - \sum_{j=2}^n \frac{\mathsf{k}_{n,j}}{\mathsf{k}_n}\sum_{i=1}^{j-1}\frac{1}{(i-x)^2}, \quad 
      \Psi'(x) = -\frac{1}{x^2} - \sum_{j=2}^\infty \mathfrak{p}_\alpha(j) \sum_{i=1}^{j-1}\frac{1}{(i-x)^2},  
    \end{align*}
    so that 
    $$
    |\Psi_n'(x) - \Psi'(x)|\le \frac{1}{\mathsf{k}_n x^2}+ \sum_{j=2}^\infty |\frac{\mathsf{k}_{n,j}}{\mathsf{k}_n} - \mathfrak{p}_\alpha(j)| \cdot  \sum_{i=1}^{\infty}\frac{1}{(i-x)^2}.
    $$
    Let us write $I=[a, b]$ with $0<a<b<1$. Then, we have 
    $$
    \sup_{x\in I} |\Psi_n'(x) - \Psi'(x)| \le \frac{1}{\mathsf{k}_n a^2} + \sum_{j=2}^\infty |\frac{\mathsf{k}_{n,j}}{\mathsf{k}_n} - \mathfrak{p}_\alpha(j)| \cdot \sum_{i=1}^{\infty}\frac{1}{(i-b)^2},
    $$
    where the first term is $o_p(1)$ by $\mathsf{k}_n\to+\infty$ and the second term is $o_p(1)$ by \Cref{lm:bounded_conv}. This completes the proof of $ \sup_{x\in I} |\Psi_n'(x) - \Psi'(x)| =o_p(1)$.
  \end{proof}

  Combining \Cref{lm:psi_property} and \Cref{lm:psi_n_property}, using the standard argument (see \cite[Lemma C.3]{koriyama2022asymptotic}), we obtain the following corollary. 
  \begin{corollary}\label{cor:consistency_and_uniform_conv} 
    The QMLE and $\Psi_n$ satisfy the following:
    \begin{itemize}
      \item The QMLE is consistent, i.e., $\hat\alpha_n\to^p \alpha$
      \item For any $\mathcal{F}_n$-measurable random variable $\tilde{\alpha}_n$ satisfying  $\tilde{\alpha}_n\to^p\alpha$, we have  $\Psi_n'(\tilde{\alpha}_n)\to^p -\mathfrak{i}(\alpha)$. 
    \end{itemize}
  \end{corollary}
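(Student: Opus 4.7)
The plan is to combine the properties of $\Psi$ from \Cref{lm:psi_property} with those of $\Psi_n$ from \Cref{lm:psi_n_property}, together with the characterization of the QMLE as a root of $\Psi_n$. The key identity linking the two is $\Psi_n(x) = \partial_\alpha \ell_n(x; \delta_0) / \mathsf{k}_n$, so that the stationary condition \eqref{eq:stationary_condition} is equivalent to $\Psi_n(\hat\alpha_n) = 0$, and by \Cref{prop:mle_existence} this holds with probability tending to $1$.

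For consistency, I would fix $\epsilon > 0$ small enough that $[\alpha - \epsilon, \alpha + \epsilon] \subset (0,1)$. Since $\Psi$ is strictly decreasing with unique root $\alpha$ (\Cref{lm:psi_property}), we have $\Psi(\alpha - \epsilon) > 0 > \Psi(\alpha + \epsilon)$. By the pointwise convergence $\Psi_n(x) \to^p \Psi(x)$ at the two points $x = \alpha \pm \epsilon$, with probability tending to $1$ we have $\Psi_n(\alpha - \epsilon) > 0 > \Psi_n(\alpha + \epsilon)$. Since $\Psi_n$ is strictly decreasing on $(0,1)$ almost surely (by $-\Psi_n' \ge 1/2$ from \Cref{lm:psi_n_property}), its unique root $\hat\alpha_n$ must lie in $(\alpha - \epsilon, \alpha + \epsilon)$ on this event. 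This yields $\PP(|\hat\alpha_n - \alpha| < \epsilon) \to 1$, hence $\hat\alpha_n \to^p \alpha$.

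For the second claim, fix a closed interval $I = [\alpha - \epsilon, \alpha + \epsilon] \subset (0,1)$. Since $\tilde\alpha_n \to^p \alpha$, the event $\{\tilde\alpha_n \in I\}$ has probability tending to $1$. On this event, I apply the triangle inequality
\[
|\Psi_n'(\tilde\alpha_n) - \Psi'(\alpha)| \le \sup_{x \in I}|\Psi_n'(x) - \Psi'(x)| + |\Psi'(\tilde\alpha_n) - \Psi'(\alpha)|.
\]
The first term is $o_p(1)$ by the uniform convergence on compacts established in \Cref{lm:psi_n_property}. The second term is $o_p(1)$ by the continuity of $\Psi'$ (\Cref{lm:psi_property}) together with $\tilde\alpha_n \to^p \alpha$ and the continuous mapping theorem. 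Combining these with $\Psi'(\alpha) = -\mathfrak{i}(\alpha)$ gives $\Psi_n'(\tilde\alpha_n) \to^p -\mathfrak{i}(\alpha)$.

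There is no substantial obstacle here: the heavy lifting has already been done in \Cref{lm:psi_property,lm:psi_n_property,prop:mle_existence}. The only subtlety is remembering that the pointwise-plus-monotonicity argument (rather than a full uniform statement) is what forces $\hat\alpha_n$ into the interval, and that one must condition on the high-probability event where both $1 < \mathsf{k}_n < n$ and $\Psi_n(\alpha \pm \epsilon)$ has the expected sign before invoking the root characterization.
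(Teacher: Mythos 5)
Your proof is correct and matches the paper's intent: the paper defers to ``the standard argument'' from \cite[Lemma C.3]{koriyama2022asymptotic} after establishing \Cref{lm:psi_property} and \Cref{lm:psi_n_property}, and your argument is exactly that standard argument carried out -- the monotone-root sandwiching via $\Psi_n(\alpha\pm\epsilon)$ for consistency, and uniform-on-compacts convergence plus continuity of $\Psi'$ for the derivative claim, with the requisite restriction to the high-probability event $\{1<\mathsf{k}_n<n\}$ where the stationary characterization $\Psi_n(\hat\alpha_n)=0$ holds.
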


\subsection{Stable Martingale CLT}
Let $\ell_n(\alpha;\mu)$ be the log-likelihood of the Gibbs-partition under \Cref{assumption}, i.e, 
$$
\ell_n(\alpha; \mu) \equiv \log \Bigl(\int \dd \mu(\theta) v_{n, \mathsf{k}_n}(\alpha, \theta) \Bigr) + \sum_{j=2}^{n} \mathsf{k}_{n,j} \sum_{i=1}^{j-1} \log(i-\alpha), \quad n\ge 1. 
$$
Notice that $\ell_{1}(\alpha;\mu)=0$ with probability $1$ since $\mathsf{k}_1 = 1$ (w.p.1) and $v_{1,1}=0$. 
Here we recall $\operatorname{supp}(\mu)\subset [\underline{\theta}, \bar{\theta}]$ with $-\alpha < \underline{\theta}\le 0 \le \bar{\theta} <+\infty$ by \Cref{assumption}. In \Cref{sec:proof_fisher}, we proved that  the derivative $\partial_\alpha$ and the integration $\int\dd \mu$ is interchangeable. Using the notation of the tilted probability measure $\mu_{n,k}$ in \eqref{eq:tilted_measure}, the score function can be written as 
\begin{align}\label{eq:score}
  \partial_\alpha \ell_n(\alpha;\mu) &= \int \dd \mu_{n, \mathsf{k}_n} (\theta) \sum_{i=1}^{\mathsf{k}_n-1} \frac{i}{\theta+i\alpha} - \sum_{j=2}^n \mathsf{k}_{n,j} \sum_{i=1}^{j-1} \frac{1}{i-\alpha}\nonumber \\
  & = \frac{\mathsf{k}_n-1}{\alpha}  - \int \dd \mu_{n,\mathsf{k}_n}(\theta)  \sum_{i=1}^{\mathsf{k}_n-1}\frac{\theta}{\alpha(\theta+i\alpha)}  - \sum_{j=2}^n \mathsf{k}_{n,j} \sum_{i=1}^{j-1} \frac{1}{i-\alpha}
\end{align}
where the second equality follows from $\frac{i}{\theta+i\alpha} = \frac{1}{\alpha}  - \frac{\theta}{\alpha(\theta+i\alpha)}$. The goal of this section is to show the asymptotic mixed normality of the score function:
  \begin{align*}
    \frac{\partial_\alpha \ell_n(\alpha;\mu)}{\sqrt{n^\alpha \mathfrak{i}(\alpha)}} \to \diversity^{1/2} N,  \quad \stable,
  \end{align*}
  where $\mathcal{F}_\infty = \sigma(\cup_{n=1}^\infty \mathcal{F}_n)$ and $\mathcal{F}_n=\sigma(\Pi_n)$ is the $\sigma$-field generated by the random partition $\Pi_n\in \mathcal{P}_n$ of $[n]$.
  We will prove this result by the stable Martingale CLT \cite{hausler2015stable}. 

\begin{lemma}\label{lm:martingale_diff}
   Let $\{X_{n}\}_{n=1}^\infty$ be the martingale difference of the score function, i.e., 
 $$
 X_1 \equiv \partial_{\alpha} \ell_{1} (\alpha;\mu) = 0, \quad 
  X_{m+1} \equiv \partial_{\alpha} \ell_{m+1}(\alpha; \mu) - \partial_{\alpha} \ell_{m}(\alpha; \mu), \quad m\ge 1. 
 $$
Now, we define the $\mathcal{F}_{m+1}$-measurable event $\Omega_{m+1, j}, 0\le j\le m$ as
\begin{align*}
  \forall j\in \{1, \dots m\}, \quad \Omega_{m+1,j} &= \{\text{$m+1$-th ball belongs to an existing set of size $j$}\}\\
  \Omega_{m+1, 0} &= \{\text{$m+1$-th ball belongs to a new set}\}
\end{align*}
Then, the increment $X_{m+1}$ is given by 
\begin{align*}
  X_{m+1} =\begin{cases}
    \res_{m}  -\frac{1}{j-\alpha} & \text{under $\Omega_{m+1, j}$ for $j\in \{1, \dots m\}$}\\ 
    \res'_{m} + \frac{1}{\alpha} & \text{under $\Omega_{m+1, 0}$}
  \end{cases} 
\end{align*}
where $\res_m$ and $\res_m'$ are $\mathcal{F}_m$-measurable random variable, whose absolute values are uniformly controlled as 
$$
|\res_m| + |\res_m'| \le C(\alpha,\underline\theta, \bar\theta) (\mathsf{k}_m^{-1} \log \mathsf{k}_m + m^{-1} \log m)
$$
where $C(\alpha,\underline\theta, \bar\theta)$ is a deterministic constant depending on $\alpha,\underline\theta, \bar\theta$ only.
\end{lemma}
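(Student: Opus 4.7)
The key is to compute $X_{m+1}=\partial_\alpha\ell_{m+1}(\alpha;\mu)-\partial_\alpha\ell_{m}(\alpha;\mu)$ directly from the closed-form score \eqref{eq:score} and track each contribution through the two possible transitions at step $m+1$. I split $\partial_\alpha\ell_n(\alpha;\mu) = A_n + T_n + B_n$ with
\[
 A_n \equiv \frac{\mathsf{k}_n-1}{\alpha},\quad T_n \equiv -\int F_{\mathsf{k}_n}(\theta)\,\dd\mu_{n,\mathsf{k}_n}(\theta),\quad B_n \equiv -\sum_{j=2}^n\mathsf{k}_{n,j}\sum_{i=1}^{j-1}\frac{1}{i-\alpha},
\]
where $F_k(\theta)\equiv\sum_{i=1}^{k-1}\theta/[\alpha(\theta+i\alpha)]$. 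Writing $T(n,k)\equiv -\int F_k\,\dd\mu_{n,k}$, I then define the $\mathcal{F}_m$-measurable quantities $\res_m \equiv T(m+1,\mathsf{k}_m) - T(m,\mathsf{k}_m)$ and $\res_m' \equiv T(m+1,\mathsf{k}_m+1) - T(m,\mathsf{k}_m)$, so the entire proof reduces to (i) identifying the explicit jumps of $A$ and $B$ on each event and (ii) bounding these two residuals.

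On the events $\Omega_{m+1,j}$ with $j\ge 1$, no new block is created, so $\mathsf{k}_{m+1}=\mathsf{k}_m$ gives $A_{m+1}-A_m=0$ and $T_{m+1}-T_m=\res_m$; replacing a block of size $j$ by one of size $j+1$ produces $B_{m+1}-B_m=-\bigl(\sum_{i=1}^{j}(i-\alpha)^{-1}-\sum_{i=1}^{j-1}(i-\alpha)^{-1}\bigr)=-(j-\alpha)^{-1}$, so $X_{m+1}=\res_m-(j-\alpha)^{-1}$. On $\Omega_{m+1,0}$ the new element forms a singleton, so $\mathsf{k}_{m+1}=\mathsf{k}_m+1$ gives $A_{m+1}-A_m=1/\alpha$ and $T_{m+1}-T_m=\res_m'$; since a block of size $1$ contributes nothing to $B$, we have $B_{m+1}=B_m$, so $X_{m+1}=\res_m'+1/\alpha$, matching the claimed piecewise formula.

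It remains to bound $|\res_m|$ and $|\res_m'|$ using \Cref{lm:concentrate_v_nk}, which provides the explicit Radon--Nikodym derivatives between the tilted measures. For $\res_m$, the density of $\mu_{m+1,\mathsf{k}_m}$ with respect to $\mu_{m,\mathsf{k}_m}$ equals $[c_m(\theta+m)]^{-1}$ with $c_m\equiv\int(\theta'+m)^{-1}\dd\mu_{m,\mathsf{k}_m}(\theta')\asymp 1/m$; since $\operatorname{supp}(\mu)\subset[\underline\theta,\bar\theta]$ is bounded, $|1/(\theta+m)-c_m|$ is uniformly $O(1/m^2)$ on the support, so the centered-ratio factor is uniformly $O(1/m)$. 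Combining this with the elementary estimate $|F_{\mathsf{k}_m}(\theta)|\lesssim \log\mathsf{k}_m$ (each summand is $\lesssim 1/i$ on the bounded support) gives $|\res_m|\lesssim \log\mathsf{k}_m/m \le \log m/m$. For $\res_m'$ I use the decomposition $F_{\mathsf{k}_m+1}=F_{\mathsf{k}_m}+\theta/[\alpha(\theta+\mathsf{k}_m\alpha)]$: the extra summand integrates to $O(1/\mathsf{k}_m)$, and the residual change of measure from $\mu_{m,\mathsf{k}_m}$ to $\mu_{m+1,\mathsf{k}_m+1}$ has likelihood ratio proportional to $(\theta+\mathsf{k}_m\alpha)/(\theta+m)$, whose oscillation over the support is $O(1/m)$ while its integral is $\asymp \mathsf{k}_m/m$; the centered-ratio factor is therefore $O(1/\mathsf{k}_m)$, contributing $O(\log\mathsf{k}_m/\mathsf{k}_m)$. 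Summing yields $|\res_m|+|\res_m'|\lesssim \mathsf{k}_m^{-1}\log\mathsf{k}_m + m^{-1}\log m$, as required. The only substantive obstacle is the careful bookkeeping of the tilted-measure changes; what keeps everything clean is the compactness of $\operatorname{supp}(\mu)$, which forces all relevant Radon--Nikodym ratios to concentrate sharply around their means.
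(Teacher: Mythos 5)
Your proposal is correct and follows essentially the same route as the paper: you use exactly the score decomposition \eqref{eq:score} into $A_n+T_n+B_n$, identify the jumps of $A$ and $B$ on each event $\Omega_{m+1,j}$, isolate the residuals as changes in the tilted-measure integral $T(n,k)=-\int F_k\,\dd\mu_{n,k}$, and bound them via compactness of $\operatorname{supp}(\mu)$ and control of the Radon--Nikodym ratios, invoking $|F_{\mathsf{k}_m}|\lesssim\log\mathsf{k}_m$. The only (cosmetic) deviation is in bounding $\res_m'$: the paper inserts the intermediate measure $\mu_{m+1,\mathsf{k}_m}$ and bounds the $k\to k+1$ and $n\to n+1$ changes of measure separately, yielding an $O(\mathsf{k}_m^{-1}\log\mathsf{k}_m)+O(m^{-1}\log m)$ estimate, whereas you handle the direct change from $\mu_{m,\mathsf{k}_m}$ to $\mu_{m+1,\mathsf{k}_m+1}$ in one step by computing that the oscillation of $(\theta+\mathsf{k}_m\alpha)/(\theta+m)$ on the compact support is $O(1/m)$ while its mean under $\mu_{m,\mathsf{k}_m}$ is $\asymp\mathsf{k}_m/m$, so the centered RN ratio is $O(1/\mathsf{k}_m)$; this is a slight streamlining that drops the $m^{-1}\log m$ contribution but yields the same (indeed marginally sharper) conclusion, well within the lemma's claimed bound.
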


\begin{proof}
The claim for $m=0$ is obvious so let us show for $m\ge 1$. Recall the definition of $\mathsf{k}_{m, j}$; $\mathsf{k}_{m,j}$ is the number of blocks of size $j$ given the given partition $(U_1, \dots U_{\mathsf{k}_m})$ of $[m]$, and $\mathsf{k}_m=\sum_{j=1}^m\mathsf{k}_{m,j}$ is the number of non-empty sets. Then, under $\Omega_{m+1, j}$ for $j\in\{1, \dots m\}$, $\{\mathsf{k}_{m+1,\ell}\}_{\ell=1}^{m+1}$ and $\mathsf{k}_{m+1}$ are updated as 
$$
\mathsf{k}_{m+1,\ell} = \begin{cases}
  \mathsf{k}_{m, \ell} - 1& \ell =j\\
  \mathsf{k}_{m, \ell} + 1 & \ell = j+1\\
  \mathsf{k}_{m, \ell} & \text{else}
\end{cases}, \quad \mathsf{k}_{m+1} = \mathsf{k}_m.
$$
Note $\mathsf{k}_{m, m+1}=0$ by the definition of $\mathsf{k}_{n,j}$.
Using the expression of the score function \eqref{eq:score}, the score $\partial_{\alpha}\ell_{m+1}(\alpha;\mu)$ under $\Omega_{m+1, j}$ is given by
\begin{align*}
  &\partial_{\alpha} \ell_{m+1}(\alpha;\mu) \\
  &=  \frac{\mathsf{k}_{m+1}-1}{\alpha}  - \int \dd \mu_{m+1,\mathsf{k}_{m+1}}(\theta)  \sum_{i=1}^{\mathsf{k}_{m+1}-1}\frac{\theta}{\alpha(\theta+i\alpha)} - \sum_{\ell=2}^{m+1} \mathsf{k}_{m+1,\ell} \sum_{i=1}^{\ell-1} \frac{1}{i-\alpha}\\
  &= \frac{\mathsf{k}_{m}-1}{\alpha}  - \int \dd \mu_{m+1,\mathsf{k}_{m}}(\theta)  \sum_{i=1}^{\mathsf{k}_{m}-1}\frac{\theta}{\alpha(\theta+i\alpha)} - \sum_{\ell=2}^{m} \mathsf{k}_{m,\ell} \sum_{i=1}^{\ell-1} \frac{1}{i-\alpha} + \sum_{i=1}^{j-1} \frac{1}{i-\alpha} -  \sum_{i=1}^{j} \frac{1}{i-\alpha} \\
  &= \frac{\mathsf{k}_{m}-1}{\alpha}  - \int \dd \mu_{m+1,\mathsf{k}_{m}}(\theta)  \sum_{i=1}^{\mathsf{k}_{m}-1}\frac{\theta}{\alpha(\theta+i\alpha)}  - \sum_{\ell=2}^{m} \mathsf{k}_{m,\ell} \sum_{i=1}^{\ell-1} \frac{1}{i-\alpha} - \frac{1}{j-\alpha}\\
  &= \partial_{\alpha}\ell_m(\alpha;\mu) + \int \Bigl(\dd \mu_{m+1, \mathsf{k}_{m}} (\theta) - \dd \mu_{m, \mathsf{k}_{m}} (\theta) \Bigr) \sum_{i=1}^{\mathsf{k}_m-1} \frac{\theta}{\alpha(\theta+i\alpha)}  - \frac{1}{j-\alpha}
\end{align*}
On the other hand, under the event $\Omega_{m+1,0}$, we have 
$$
\mathsf{k}_{m+1,\ell} = \begin{cases}
  \mathsf{k}_{m, \ell} + 1 & \ell = 1\\
  \mathsf{k}_{m, \ell} & \text{else}
\end{cases}, \quad \mathsf{k}_{m+1} = \mathsf{k}_m+1.
$$
Then, the score under this event is updated as 
\begin{align*}
  &\partial_{\alpha} \ell_{m+1}(\alpha;\mu)\\
  &= \frac{\mathsf{k}_{m}+1-1}{\alpha}  - \int \dd \mu_{m+1,\mathsf{k}_{m}+1}(\theta)  \sum_{i=1}^{\mathsf{k}_{m}+1-1}\frac{\theta}{\alpha(\theta+i\alpha)} - \sum_{\ell=2}^{m} \mathsf{k}_{m,\ell} \sum_{i=1}^{\ell-1} \frac{1}{i-\alpha} \\
  &= \frac{\mathsf{k}_n}{\alpha}  - \int \dd \mu_{m+1,\mathsf{k}_{m}+1}(\theta)  \sum_{i=1}^{\mathsf{k}_{m}-1}\frac{\theta}{\alpha(\theta+i\alpha)} - \int\dd\mu_{m+1, \mathsf{k}_m+1} (\theta) \frac{\theta}{\alpha(\theta+\mathsf{k}_m \alpha)}  - \sum_{\ell=2}^{m} \mathsf{k}_{m,\ell} \sum_{i=1}^{\ell-1} \frac{1}{i-\alpha}\\
  &= \partial_{\alpha} \ell_{m} (\alpha,\mu) + \frac{1}{\alpha} - \int\dd\mu_{m+1, \mathsf{k}_m+1} (\theta) \frac{\theta}{\alpha(\theta+\mathsf{k}_m \alpha)}  - \int \Bigl(\dd \mu_{m+1,\mathsf{k}_{m}+1}(\theta) - \dd \mu_{m, \mathsf{k}_m} \Bigr)  \sum_{i=1}^{\mathsf{k}_{m}-1}\frac{\theta}{\alpha(\theta+i\alpha)}. 
\end{align*}
Therefore, the increment $X_{m+1}$ is given by 
\begin{align*}
  X_{m+1} =\begin{cases}
    \res_{m}  -\frac{1}{j-\alpha} & \text{under $\Omega_{m+1, j}$ for $j\in \{1, \dots m\}$}\\ 
    \res'_{m} + \frac{1}{\alpha} & \text{under $\Omega_{m+1, 0}$}
  \end{cases} 
\end{align*}
where $\res_m$ and $\res_m'$ are $\mathcal{F}_m$-measurable random variable defined as 
\begin{align*}
  \res_m &\equiv \int \Bigl(\dd \mu_{m+1, \mathsf{k}_{m}} (\theta) - \dd \mu_{m, \mathsf{k}_{m}} (\theta) \Bigr) \sum_{i=1}^{\mathsf{k}_m-1} \frac{\theta}{\alpha(\theta+i\alpha)}\\
  \res_m' &\equiv - \int\dd\mu_{m+1, \mathsf{k}_m+1} (\theta) \frac{\theta}{\alpha(\theta+\mathsf{k}_m \alpha)}  - \int \Bigl(\dd \mu_{m+1,\mathsf{k}_{m}+1}(\theta) - \dd \mu_{m, \mathsf{k}_m} \Bigr)  \sum_{i=1}^{\mathsf{k}_{m}-1}\frac{\theta}{\alpha(\theta+i\alpha)} 
\end{align*}
Let us derive upper bounds of $\res_m$ first. Recall the definition of the tilted measure $d\mu_{n,k}(\theta)$ for $1\le k\le n$:
\begin{align*}
    d\mu_{n,k}(\theta) \equiv \frac{\dd \mu(\theta) v_{n,k}(\alpha, \theta)}{\int \dd\mu(\theta') v_{n,k}(\alpha,\theta')} \quad \text{with} \quad v_{n,k}(\alpha;\theta) = \frac{\prod_{i=1}^{k-1} (\theta + i\alpha)}{\prod_{i=1}^{n-1}(\theta + i)}.
\end{align*}
By $v_{m+1, \mathsf{k}_m} = v_{m, \mathsf{k}_m} \frac{1}{\theta+m}$, $\res_{m}$ can be written as  
\begin{align*}
  \res_{m} &= \int \Bigl(\dd \mu_{m+1, \mathsf{k}_{m}} (\theta) - \dd \mu_{m, \mathsf{k}_{m}} (\theta) \Bigr) \sum_{i=1}^{\mathsf{k}_m-1} \frac{\theta}{\alpha(\theta+i\alpha)} \\
  &= \int\dd \mu(\theta)   \Bigl( \frac{v_{m, \mathsf{k}_m}(\theta) \frac{1}{\theta+m}}{\int \dd \mu (\theta') v_{m, \mathsf{k}_m}(\theta')  \frac{1}{\theta'+m}} -  \frac{v_{m, \mathsf{k}_m}(\theta)}{\int \dd \mu (\theta') v_{m, \mathsf{k}_m}(\theta') }   \Bigr) \sum_{i=1}^{\mathsf{k}_m-1} \frac{\theta}{\alpha(\theta+i\alpha)}
\end{align*}
Since $\operatorname{supp}(\mu) \subset [\underline\theta, \bar\theta]$, the following inequality holds for all $\theta\in \operatorname{supp}(\mu)$:
$$
\frac{\frac{1}{\bar\theta+m}}{\frac{1}{\underline{\theta}+m}} \cdot 
\frac{v_{m, \mathsf{k}_m}(\theta)}{\int \dd \mu (\theta') v_{m, \mathsf{k}_m}(\theta')} \le 
\frac{v_{m, \mathsf{k}_m}(\theta) \frac{1}{\theta+m}}{\int \dd \mu (\theta') v_{m, \mathsf{k}_m}(\theta')  \frac{1}{\theta'+m}} \le \frac{\frac{1}{\underline{\theta}+m}} {\frac{1}{\bar\theta+m}}\cdot 
\frac{v_{m, \mathsf{k}_m}(\theta)}{\int \dd \mu (\theta') v_{m, \mathsf{k}_m}(\theta')}. 
$$
Using $\frac{\underline{\theta}+m}{\bar\theta+m}-1  = \frac{\underline{\theta}-\bar\theta}{\bar\theta+m}$ and 
$\frac{\bar\theta+m}{\underline{\theta}+m}-1  = \frac{\bar\theta-\underline{\theta}}{\underline\theta+m}$, 
noting that $|\frac{\bar\theta-\underline{\theta}}{\underline\theta+m}|>|\frac{\underline{\theta}-\bar\theta}{\bar\theta+m}|$ by $\bar\theta>\underline{\theta}$, we have 
\begin{align*}
  \forall \theta\in \operatorname{supp}(\mu), \quad 
  \left|\frac{v_{m, \mathsf{k}_m}(\theta) \frac{1}{\theta+m}}{\int \dd \mu (\theta') v_{m, \mathsf{k}_m}(\theta')  \frac{1}{\theta'+m}}  - \frac{v_{m, \mathsf{k}_m}(\theta)}{\int \dd \mu (\theta') v_{m, \mathsf{k}_m}(\theta')}\right|\le  \frac{\bar\theta-\underline{\theta}}{\underline\theta+m}\cdot 
\frac{v_{m, \mathsf{k}_m}(\theta)}{\int \dd \mu (\theta') v_{m, \mathsf{k}_m}(\theta')}. 
\end{align*}
Putting them all together, we get 
\begin{align}
  \Bigl| \int \Bigl(\dd \mu_{m+1, \mathsf{k}_{m}} (\theta) - \dd \mu_{m, \mathsf{k}_{m}} (\theta) \Bigr) \sum_{i=1}^{\mathsf{k}_m-1} \frac{\theta}{\alpha(\theta+i\alpha)}\Bigr| &\le \frac{\bar\theta-\underline{\theta}}{\underline\theta+m} \int \dd\mu(\theta)
\frac{v_{m, \mathsf{k}_m}(\theta)}{\int \dd \mu (\theta') v_{m, \mathsf{k}_m}(\theta')} \Bigl|\sum_{i=1}^{\mathsf{k}_m-1}\frac{\theta}{\theta+i\alpha}\Bigr|\nonumber \\
&= \frac{\bar\theta-\underline{\theta}}{\underline\theta+m} \int \dd \mu_{m, \mathsf{k}_m}(\theta) \Bigl|\sum_{i=1}^{\mathsf{k}_m-1}\frac{\theta}{\theta+i\alpha}\Bigr|\nonumber\\
&\le \frac{\bar\theta-\underline{\theta}}{\underline\theta+m} \int \dd \mu_{m, \mathsf{k}_m}(\theta) \sum_{i=1}^{\mathsf{k}_m-1}\Bigl|\frac{\theta}{\theta+i\alpha}\Bigr|\nonumber \\
&\le \frac{\bar\theta-\underline{\theta}}{\underline\theta+m} \sum_{i=1}^{\mathsf{k}_m-1} \frac{\bar\theta-\underline{\theta}}{\alpha(\underline{\theta}+i\alpha)}\label{eq:res_m_bound}
\end{align}
where we have used $\operatorname{supp}(\mu_{m, \mathsf{k}_m}) \subset [\underline{\theta}, \bar\theta]$ for the last inequality. Thus, there exists a positive constant $C=C(\alpha,\underline\theta, \bar\theta)$ such that 
$$
|\res_{m}|\le C m^{-1} \log m.
$$
Next, we derive an upper bound of $\res_m'$. Using the tilted measure again, we can write it as 
$$
\res_{m}' = - \int\dd\mu_{m+1, \mathsf{k}_m+1} (\theta) \frac{\theta}{\alpha(\theta+\mathsf{k}_m \alpha)}  - \int \Bigl(\dd \mu_{m+1,\mathsf{k}_{m}+1}(\theta) - \dd \mu_{m, \mathsf{k}_m}(\theta) \Bigr)  \sum_{i=1}^{\mathsf{k}_{m}-1}\frac{\theta}{\alpha(\theta+i\alpha)}. 
$$
The first term is easy to control; using $\operatorname{supp}(\mu_{m+1, \mathsf{k}_{m+1}})\subset [\underline{\theta}, \bar\theta]$, 
$$
\Bigl|\int\dd\mu_{m+1, \mathsf{k}_m+1} (\theta) \frac{\theta}{\alpha(\theta+\mathsf{k}_m \alpha)}\Bigr| \le \frac{\bar\theta-\underline{\theta}}{\alpha(\underline\theta+\mathsf{k}_m\alpha)} \le C(\alpha,\underline\theta, \bar\theta)\cdot \mathsf{k}_m^{-1}
$$
As for the second term, we decompose it into two terms:
\begin{align*}
  &\int \Bigl(\dd \mu_{m+1,\mathsf{k}_{m}+1}(\theta) - \dd \mu_{m, \mathsf{k}_m} (\theta)\Bigr)  \sum_{i=1}^{\mathsf{k}_{m}-1}\frac{\theta}{\alpha(\theta+i\alpha)} \\
  &= \int \Bigl(\dd \mu_{m+1,\mathsf{k}_{m}+1}(\theta) - \dd \mu_{m+1, \mathsf{k}_m}(\theta) \Bigr)  \sum_{i=1}^{\mathsf{k}_{m}-1}\frac{\theta}{\alpha(\theta+i\alpha)} \\
  &+ \int \Bigl(\dd \mu_{m+1,\mathsf{k}_{m}}(\theta) - \dd \mu_{m, \mathsf{k}_m}(\theta) \Bigr)  \sum_{i=1}^{\mathsf{k}_{m}-1}\frac{\theta}{\alpha(\theta+i\alpha)}, 
\end{align*}
where the absolute value of the second term is bounded by $m^{-1} \log m$ up to a deterministic constant by \eqref{eq:res_m_bound}. Let us bound the first term using the same argument as \eqref{eq:res_m_bound}. By the definition of the tilted measure, using $v_{m+1, \mathsf{k}_m+1} = v_{m+1, \mathsf{k}_m} (\theta + \mathsf{k}_m\alpha)$, we have 
\begin{align*}
  &\int \Bigl(\dd \mu_{m+1,\mathsf{k}_{m}+1}(\theta) - \dd \mu_{m+1, \mathsf{k}_m} (\theta)\Bigr)  \sum_{i=1}^{\mathsf{k}_{m}-1}\frac{\theta}{\alpha(\theta+i\alpha)} \\
  &= \int \dd\mu(\theta) \Bigl(\frac{v_{m+1, \mathsf{k}_m} (\theta) \cdot (\theta + \mathsf{k}_m\alpha ) }{\int \dd\mu(\theta') v_{m+1, \mathsf{k}_m}(\theta') \cdot (\theta' + \mathsf{k}_m\alpha ) } - \frac{v_{m+1, \mathsf{k}_m} (\theta)}{\int \dd\mu(\theta') v_{m+1, \mathsf{k}_m}(\theta') } \Bigr)  \sum_{i=1}^{\mathsf{k}_{m}-1}\frac{\theta}{\alpha(\theta+i\alpha)},
\end{align*}
and for all $\theta\in \operatorname{supp}(\mu)\subset [\underline{\theta}, \bar\theta]$, 
\begin{align*}
  \frac{\underline\theta + \mathsf{k}_m\alpha }{\bar\theta + \mathsf{k}_m\alpha } \cdot \frac{v_{m+1, \mathsf{k}_m} (\theta)}{\int \dd\mu(\theta') v_{m+1, \mathsf{k}_m}(\theta')}\le  \frac{v_{m+1, \mathsf{k}_m} (\theta) \cdot (\theta + \mathsf{k}_m\alpha ) }{\int \dd\mu(\theta') v_{m+1, \mathsf{k}_m}(\theta') \cdot (\theta' + \mathsf{k}_m\alpha ) }  \le \frac{\bar\theta + \mathsf{k}_m\alpha }{\underline\theta + \mathsf{k}_m\alpha } \cdot  \frac{v_{m+1, \mathsf{k}_m} (\theta)}{\int \dd\mu(\theta') v_{m+1, \mathsf{k}_m}(\theta')}.
\end{align*}
Using $\frac{\underline\theta + \mathsf{k}_m\alpha }{\bar\theta + \mathsf{k}_m\alpha } -1 = \frac{\underline\theta-\bar\theta}{\bar\theta+\mathsf{k}_m\alpha}$ and $\frac{\bar\theta + \mathsf{k}_m\alpha }{\underline\theta + \mathsf{k}_m\alpha } -1 = \frac{\bar\theta-\underline\theta}{\underline\theta+\mathsf{k}_m\alpha}$, with $|\frac{\bar\theta-\underline\theta}{\underline\theta+\mathsf{k}_m\alpha}| > |\frac{\underline\theta-\bar\theta}{\bar\theta+\mathsf{k}_m\alpha}|$ by $\bar\theta>\underline{\theta}$, 
we have 
\begin{align*}
  \Bigl|\int \Bigl(\dd \mu_{m+1,\mathsf{k}_{m}+1}(\theta) - \dd \mu_{m+1, \mathsf{k}_m} (\theta)\Bigr)  \sum_{i=1}^{\mathsf{k}_{m}-1}\frac{\theta}{\alpha(\theta+i\alpha)}\Bigr| &\le \frac{\bar\theta-\underline\theta}{\underline\theta + \mathsf{k}_m\alpha }\int \dd \mu_{m+1, \mathsf{k}_m}(\theta) \Bigl|\sum_{i=1}^{\mathsf{k}_{m}-1}\frac{\theta}{\alpha(\theta+i\alpha)}\Bigr|\\
  &\le \frac{\bar\theta-\underline\theta}{\underline\theta + \mathsf{k}_m\alpha } \sum_{i=1}^{\mathsf{k}_m-1} \frac{\bar\theta-\underline{\theta}}{\alpha(\underline{\theta}+i\alpha)}
\end{align*}
Therefore, there exists a deterministic constant $C=C(\alpha, \underline{\theta}, \bar\theta)$ such that 
\begin{align*}
  |\res_{m}'| \le C(\alpha, \underline{\theta}, \bar\theta) (
    \mathsf{k}_m^{-1} + \mathsf{k}_m^{-1}\log \mathsf{k}_m + m^{-1}\log m)
\end{align*}
This completes the proof. 
\end{proof}
Now we derive the almost sure limit of the quadratic variation of the martingale difference $X_{m+1}$. 
\begin{lemma}\label{lm:quadratic_variation}
Let $X_{m+1}$ be the martingale difference defined in \Cref{lm:martingale_diff}.  Then, we have
  $$
  n^{-\alpha} \sum_{m=0}^{n-1} \E[X_{m+1}^2|\mathcal{F}_{m}] \to^p \mathfrak{i}(\alpha) \diversity,
  $$
  where $\diversity = \lim_{n\to+\infty} n^{-\alpha} \editline{\mathsf{k}_n}$ (a.s.) and $\mathfrak{i}(\alpha)$ is the Fisher Information of the discrete distribution $p_\alpha(j)$ given by \Cref{prop:fisher_info_sibuya}. 
\end{lemma}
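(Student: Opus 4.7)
The approach is to expand $\E[X_{m+1}^2|\mathcal{F}_m]$ explicitly using the conditional probabilities of the events $\Omega_{m+1,j}$ under the Gibbs partition, isolate a leading term that admits a Cesaro-type limit, and show that all residual terms are negligible after normalization by $n^\alpha$. Using $\PP(\Omega_{m+1,0}|\mathcal{F}_m) = v_{m+1,\mathsf{k}_m+1}/v_{m,\mathsf{k}_m}$ and $\PP(\Omega_{m+1,j}|\mathcal{F}_m) = \mathsf{k}_{m,j}(j-\alpha)\,v_{m+1,\mathsf{k}_m}/v_{m,\mathsf{k}_m}$ for $j\ge 1$, together with the description of $X_{m+1}$ from \Cref{lm:martingale_diff}, the first step is to write
\begin{equation*}
\E[X_{m+1}^2|\mathcal{F}_m] = \frac{v_{m+1,\mathsf{k}_m+1}}{v_{m,\mathsf{k}_m}}\Bigl(\tfrac{1}{\alpha}+\res_m'\Bigr)^{2} + \frac{v_{m+1,\mathsf{k}_m}}{v_{m,\mathsf{k}_m}}\sum_{j=1}^m \mathsf{k}_{m,j}(j-\alpha)\Bigl(\tfrac{1}{j-\alpha}-\res_m\Bigr)^{2}.
\end{equation*}

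Next, I would apply \Cref{lm:concentrate_v_nk} to replace $v_{m+1,\mathsf{k}_m}/v_{m,\mathsf{k}_m}$ by $1/m$ and $v_{m+1,\mathsf{k}_m+1}/v_{m,\mathsf{k}_m}$ by $\mathsf{k}_m\alpha/m$ up to controlled errors, and drop the residual contributions from $\res_m, \res_m'$, leaving the leading term
\begin{equation*}
L_m := \frac{\mathsf{k}_m}{m\alpha} + \frac{1}{m}\sum_{j=1}^m \frac{\mathsf{k}_{m,j}}{j-\alpha} = \frac{\mathsf{k}_m}{m}\Bigl(\frac{1}{\alpha} + \sum_{j=1}^\infty \frac{\mathsf{k}_{m,j}/\mathsf{k}_m}{j-\alpha}\Bigr).
\end{equation*}
Since $1/(j-\alpha)$ is bounded by $1/(1-\alpha)$ on $j\in\mathbb{N}$, \Cref{lm:bounded_conv} gives $\sum_{j\ge 1}(\mathsf{k}_{m,j}/\mathsf{k}_m)/(j-\alpha)\asconv \sum_{j\ge 1}\mathfrak{p}_\alpha(j)/(j-\alpha)$; combined with the identity $\alpha\mathfrak{i}(\alpha) = 1/\alpha + \sum_{j\ge 1}\mathfrak{p}_\alpha(j)/(j-\alpha)$ from \Cref{prop:fisher_info_sibuya}, this shows $L_m \sim \alpha\mathfrak{i}(\alpha)\cdot \mathsf{k}_m/m$ almost surely. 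A Stolz-Cesaro argument exploiting $\mathsf{k}_m \sim \diversity\, m^\alpha$ from \Cref{theorem:Lp_converge} and $\sum_{m=1}^{n-1} m^{\alpha-1}\sim n^\alpha/\alpha$ then delivers
\begin{equation*}
n^{-\alpha}\sum_{m=0}^{n-1} L_m \asconv \alpha\mathfrak{i}(\alpha)\cdot \frac{\diversity}{\alpha} = \mathfrak{i}(\alpha)\,\diversity.
\end{equation*}

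The main obstacle is the error control: I must verify that the cross terms, the squared-residual terms, and the $v$-ratio approximation errors in the expansion above jointly contribute $o(n^\alpha)$ almost surely. The most delicate is the cross term $-2\res_m(v_{m+1,\mathsf{k}_m}/v_{m,\mathsf{k}_m})\sum_{j=1}^m \mathsf{k}_{m,j} = O(|\res_m|\mathsf{k}_m/m)$; invoking the bound $|\res_m|+|\res_m'|\le C(\mathsf{k}_m^{-1}\log\mathsf{k}_m + m^{-1}\log m)$ from \Cref{lm:martingale_diff}, this splits into an $O(\log m/m)$ part (summing to $O((\log n)^2)$) and an $O(\mathsf{k}_m\log m/m^2)$ part (summing a.s.\ to a finite limit via $\sum_m m^{\alpha-2}\log m<\infty$), both $o(n^\alpha)$. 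The squared residuals, the analogous contribution from the $\Omega_{m+1,0}$ branch, and the $O(1/m)$ and $O(1/m^2)$ approximation errors from \Cref{lm:concentrate_v_nk} are controlled by similar and strictly smaller estimates, yielding the stated almost-sure convergence and hence convergence in probability.
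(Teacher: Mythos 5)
Your proposal is correct and follows essentially the same route as the paper: expand $\E[X_{m+1}^2\mid\mathcal{F}_m]$ via the conditional probabilities of $\Omega_{m+1,j}$, approximate the $v$-ratios by $1/m$ and $\mathsf{k}_m\alpha/m$ using \Cref{lm:concentrate_v_nk}, identify the leading term $\frac{\mathsf{k}_m}{m}\bigl(\frac1\alpha+\sum_j(\mathsf{k}_{m,j}/\mathsf{k}_n)/(j-\alpha)\bigr)\asconv\alpha\mathfrak{i}(\alpha)\,\mathsf{k}_m/m$ via \Cref{lm:bounded_conv} and \Cref{prop:fisher_info_sibuya}, and close with a Cesàro argument using $m^{-\alpha}\mathsf{k}_m\asconv\diversity$. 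The only organizational difference is that the paper first establishes the pointwise a.s.\ convergence $\frac{m}{\mathsf{k}_m}\E[X_{m+1}^2\mid\mathcal{F}_m]\asconv\alpha\mathfrak{i}(\alpha)$ (absorbing all $\res_m,\res_m'$ errors into that limit) and then runs a single Stolz--Cesàro step on the ratio, whereas you sum the residual contributions separately with explicit summable bounds; both bookkeeping schemes are valid and yield the same conclusion.
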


\begin{proof}
Below, we write $A_n\lesssim B_n$ if there exists a deterministic constant $C$ which only depends on $(\alpha, \underline\theta, \bar\theta)$ such that $A_n \le C B_n$ with probability $1$. 
Recall the definition of $\Omega_{m+1, j}$ in \Cref{lm:martingale_diff}. 
By \eqref{eq:rule}, the conditional probability of these events are given by 
\begin{align*}
  \forall j\in \{1, \dots, m\}, \quad \PP(\Omega_{m+1,j}|\mathcal{F}_m) &= \mathsf{k}_{m,j} \frac{v_{m+1, \mathsf{k}_m}}{v_{m, \mathsf{k}_m}} (j-\alpha)\\
  \PP(\Omega_{m+1, 0}|\mathcal{F}_m) &= \frac{v_{m+1, \mathsf{k}_{m}+1}}{v_{m, \mathsf{k}_m}},
\end{align*}
where $\mathsf{k}_{m,j}$ is the number of blocks of size $j$ among the given partition $(U_1, \dots U_{\mathsf{k}_m})$ of $[m]$. Then, by the expression of the martingale difference given by \Cref{lm:martingale_diff}, we have 
$$
\E[X_{m+1}^2|\mathcal{F}_{m}] = \sum_{j=1}^{m}\Bigl(\frac{1}{(j-\alpha)}+\res_m\Bigr)^2 \mathsf{k}_{m,j} \frac{v_{m+1, \mathsf{k}_m}}{v_{m, \mathsf{k}_m}} (j-\alpha) + \bigl(\frac{1}{\alpha} + \res_{m}'\bigr)^2 \frac{v_{m+1, \mathsf{k}_{m}+1}}{v_{m, \mathsf{k}_m}}
$$
with $|\res_m'|+|\res_m|\lesssim \mathsf{k}_m^{-1}\log\mathsf{k}_m + m^{-1}\log m$. Now we claim that
$$
 \frac{m}{\mathsf{k}_m} \E[X_{m+1}^2|\mathcal{F}_m] \asconv \alpha \mathfrak{i}(\alpha) 
$$
as $m\to+\infty$. Here, by \Cref{lm:concentrate_v_nk}, $\frac{v_{m+1, \mathsf{k}_{m}}} {v_{m, \mathsf{k}_m}}$ and $\frac{v_{m+1, \mathsf{k}_{m}+1}} {v_{m, \mathsf{k}_m}}$ are concentrated as 
$$
\Bigl|\frac{v_{m+1, \mathsf{k}_{m}}} {v_{m, \mathsf{k}_m}}- \frac{1}{m}\Bigr|\lesssim \frac{1}{m^2}, \quad \Bigl|\frac{v_{m+1, \mathsf{k}_{m}+1}} {v_{m, \mathsf{k}_m}} - \frac{\mathsf{k}_m}{m}\alpha \Bigr|\lesssim \frac{1}{m}. 
$$
Note 
\begin{align*}
  &\frac{m}{\mathsf{k}_m} \cdot \sum_{j=1}^{m}\Bigl(\frac{1}{(j-\alpha)}+\res_m\Bigr)^2 \mathsf{k}_{m,j} \frac{v_{m+1, \mathsf{k}_m}}{v_{m, \mathsf{k}_m}} (j-\alpha)  \\
  &= \sum_{j=1}^{m}\Bigl(\frac{1}{(j-\alpha)}+\res_m\Bigr)^2 \frac{\mathsf{k}_{m,j}}{\mathsf{k}_m} \cdot m \frac{v_{m+1, \mathsf{k}_m}}{v_{m, \mathsf{k}_m}} (j-\alpha)\\
  &= \sum_{j=1}^{m}\Bigl(\frac{1}{(j-\alpha)}+\res_m\Bigr)^2 (j-\alpha) \frac{\mathsf{k}_{m,j}}{\mathsf{k}_m} + \sum_{j=1}^{m}\Bigl(\frac{1}{(j-\alpha)}+\res_m\Bigr)^2 (j-\alpha) \Bigl(m \frac{v_{m+1, \mathsf{k}_m}}{v_{m, \mathsf{k}_m}} - 1\Bigr) \frac{\mathsf{k}_{m,j}}{\mathsf{k}_m}, 
\end{align*}
where the second term is $o_{a.s}(1)$:
\begin{align*}
&\left|\sum_{j=1}^{m}\Bigl(\frac{1}{(j-\alpha)}+\res_m\Bigr)^2 (j-\alpha) \Bigl(m \frac{v_{m+1, \mathsf{k}_m}}{v_{m, \mathsf{k}_m}} - 1\Bigr) \frac{\mathsf{k}_{m,j}}{\mathsf{k}_m}\right| \\
&\le  \sup_{j\in [m]} \Bigl(\frac{1}{(j-\alpha)}+\res_m\Bigr)^2 (j-\alpha) \cdot \Bigl|m \frac{v_{m+1, \mathsf{k}_m}}{v_{m, \mathsf{k}_m}} - 1\Bigr| && \text{$\sum_{j=1}^m \frac{\mathsf{k}_{m,j}}{\mathsf{k}_m}=1$}\\
&\lesssim  \sup_{j\in [m]} \Bigl(\frac{1}{(j-\alpha)^2}+\res_m^2\Bigr)  (j-\alpha) \cdot \frac{1}{m} && \Bigl|\frac{v_{m+1, \mathsf{k}_{m}}} {v_{m, \mathsf{k}_m}}- \frac{1}{m}\Bigr|\lesssim \frac{1}{m^2} \\
&\le  \sup_{j\in [m]} \Bigl(
  \frac{1}{j-\alpha} + (j-\alpha)\bigl(m^{-2}\log^2 m + \mathsf{k}_m^{-2} \log^2 \mathsf{k}_m\bigr) 
\Bigr)\cdot \frac{1}{m} && |\res_m| \lesssim \mathsf{k}_m^{-1}\log\mathsf{k}_m + m^{-1}\log m
\\
&\lesssim  \frac{1}{m}\Bigl(
\frac{1}{1-\alpha} + (m-\alpha) \bigl(m^{-2}\log^2 m + \mathsf{k}_m^{-2} \log^2 \mathsf{k}_m\bigr)
\Bigr)\\
&\asconv 0 && \mathsf{k}_m \asconv +\infty  
\end{align*}
By the same argument, we have 
\begin{align*}
  \frac{m}{\mathsf{k}_m}\cdot \bigl(\frac{1}{\alpha} + \res_{m}'\bigr)^2 \frac{v_{m+1, \mathsf{k}_{m}+1}}{v_{m, \mathsf{k}_m}} = \Bigl(\frac{1}{\alpha} + \res_{m}'\Bigr)^2 \alpha + \bigl(\frac{1}{\alpha} + \res_{m}'\bigr)^2 \frac{m}{\mathsf{k}_m} \Bigl(\frac{v_{m+1, \mathsf{k}_{m}+1}} {v_{m, \mathsf{k}_m}} - \frac{\mathsf{k}_m}{m}\alpha \Bigr),
\end{align*}
where the second term is $o_{a.s.}(1)$:
\begin{align*}
\left|\bigl(\frac{1}{\alpha} + \res_{m}'\bigr)^2 \frac{m}{\mathsf{k}_m} \Bigl(\frac{v_{m+1, \mathsf{k}_{m}+1}} {v_{m, \mathsf{k}_m}} - \frac{\mathsf{k}_m}{m}\alpha \Bigr)\right| &\lesssim \bigl(\frac{1}{\alpha} + \res_{m}'\bigr)^2 \frac{m}{\mathsf{k}_m} \cdot \frac{1}{m} && \Bigl|\frac{v_{m+1, \mathsf{k}_{m}+1}} {v_{m, \mathsf{k}_m}} - \frac{\mathsf{k}_m}{m}\alpha \Bigr|\lesssim \frac{1}{m}. \\
&\asconv 0 && |\res_m'| \asconv 0, \quad \mathsf{k}_m\asconv +\infty. 
\end{align*}
Therefore, we obtain
$$
 \frac{m}{\mathsf{k}_m} \E[X_{m+1}^2|\mathcal{F}_m]  = \sum_{j=1}^{m}\Bigl(\frac{1}{(j-\alpha)}+\res_m\Bigr)^2 (j-\alpha) \frac{\mathsf{k}_{m,j}}{\mathsf{k}_m}  + \Bigl(\frac{1}{\alpha} + \res_{m}'\Bigr)^2 \alpha + o_{a.s.} (1). 
$$
Here, $\Bigl(\frac{1}{\alpha} + \res_{m}'\Bigr)^2 \alpha \asconv \frac{1}{\alpha}$ by $\res_m'\asconv 0$ while 
\begin{align*}
  &\Bigl|\sum_{j=1}^{m}\Bigl(\frac{1}{(j-\alpha)}+\res_m\Bigr)^2 (j-\alpha) \frac{\mathsf{k}_{m,j}}{\mathsf{k}_m} -   \sum_{j=1}^{m}\Bigl(\frac{1}{(j-\alpha)}\Bigr)^2 (j-\alpha) \frac{\mathsf{k}_{m,j}}{\mathsf{k}_m} \Bigr|\\
  &\le \sup_{j\in [m]} \Bigl|\Bigl(\frac{1}{(j-\alpha)}+\res_m\Bigr)^2  - \frac{1}{(j-\alpha)^2}\Bigr| && \frac{\sum_{j=1}^m \mathsf{k}_{m,j}}{\mathsf{k}_m}=1\\
  &=\sup_{j\in [m]} |\res_m|  \Bigl|\frac{2}{(j-\alpha)}+\res_m \Bigr|\\
  &\le |\res_m| \bigl(\frac{2}{(1-\alpha)}+|\res_m|\bigr)\\
  &\asconv 0 && \res_m \asconv 0.
\end{align*}
Thus, we get 
$$
 \frac{m}{\mathsf{k}_m} \E[X_{m+1}^2|\mathcal{F}_m] = \sum_{j=1}^{m} \frac{1}{(j-\alpha)} \frac{\mathsf{k}_{m,j}}{\mathsf{k}_m} + \frac{1}{\alpha} + o_{a.s.}(1). 
$$
Combined with the dominated convergence-type theorem in \Cref{lm:bounded_conv}, we get 
$$
 \frac{m}{\mathsf{k}_m} \E[X_{m+1}^2|\mathcal{F}_m] \asconv \sum_{j=1}^\infty \frac{1}{(j-\alpha)} \mathfrak{p}_\alpha(j) + \frac{1}{\alpha} = \alpha \mathfrak{i}(\alpha)
$$
where we have used \Cref{prop:fisher_info_sibuya} for the equality. 
Combined with $m^{-\alpha}\mathsf{k}_m\asconv \diversity$, we get
$$
\frac{\E[X_{m+1}^2|\mathcal{F}_m]}{\alpha m^{\alpha-1}}  \asconv  \mathfrak{i}(\alpha) \diversity.
$$
Let us show $n^{-\alpha} \sum_{m=0}^{n-1} \E[X_{m+1}^2|\mathcal{F}_m] - \mathfrak{i}(\alpha) \diversity  \asconv 0$ by standard algebra. Note 
\begin{align*}
  &\Bigl|n^{-\alpha} \sum_{m=0}^{n-1} \E[X_{m+1}^2|\mathcal{F}_m] - \mathfrak{i}(\alpha) \diversity\Bigr| \\
  &= \Bigl|\frac{1}{n^\alpha} \sum_{m=1}^{n-1} \alpha m^{\alpha-1} \frac{\E[X_{m+1}^2|\mathcal{F}_m]}{\alpha m^{\alpha-1}} - \mathfrak{i}(\alpha)\diversity\Bigr|\\
  &\le \frac{1}{n^\alpha} \sum_{m=1}^{n-1} \alpha m^{\alpha-1}\Bigl|
    \frac{\E[X_{m+1}^2|\mathcal{F}_m]}{\alpha m^{\alpha-1}} - \mathfrak{i}(\alpha) \diversity 
  \Bigr| + \Bigl| \frac{1}{n^\alpha} \sum_{m=1}^{n-1} \alpha m^{\alpha-1} - 1\Bigr| \mathfrak{i}(\alpha) \diversity
\end{align*}
The second term converges to $0$ almost surely by
$\frac{1}{n^\alpha} \sum_{m=1}^{n-1} \alpha m^{\alpha-1}\to 1$. 
For the first term, we know from $\frac{\E[X_{m+1}^2|\mathcal{F}_m]}{\alpha m^{\alpha-1}}  \asconv  \mathfrak{i}(\alpha) \diversity$ that with probability $1$, 
$$
\forall \epsilon>0, \quad \exists N_\epsilon (\omega) >0 \quad \sup_{m\ge N_{\epsilon}} \Bigl|
  \frac{\E[X_{m+1}^2|\mathcal{F}_m]}{\alpha m^{\alpha-1}} - \mathfrak{i}(\alpha) \diversity 
\Bigr| \le \epsilon
$$
Thus, for any $n\ge N_\epsilon(\omega)$,
 \begin{align*}
  &\frac{1}{n^\alpha} \sum_{m=1}^{n-1} \alpha m^{\alpha-1}\Bigl|
    \frac{\E[X_{m+1}^2|\mathcal{F}_m]}{\alpha m^{\alpha-1}} - \mathfrak{i}(\alpha) \diversity 
  \Bigr| \\
  &\le \frac{1}{n^\alpha} \underbrace{\sum_{m=1}^{N_{\epsilon}} \alpha m^{\alpha-1} \Bigl|
    \frac{\E[X_{m+1}^2|\mathcal{F}_m]}{\alpha m^{\alpha-1}} - \mathfrak{i}(\alpha) \diversity 
  \Bigr|}_{\equiv M_\epsilon(\omega)}  + \frac{1}{n^\alpha} \sum_{m=N_{\epsilon}}^{n-1} \alpha m^{\alpha-1} \epsilon\\
  &\le \frac{1}{n^\alpha} M_\epsilon(\omega) + \frac{1}{n^\alpha} \sum_{m=1}^{n-1} \alpha m^{\alpha-1} \epsilon.
 \end{align*}
 Combined with $\frac{1}{n^\alpha} \sum_{m=1}^{n-1} \alpha m^{\alpha-1}\to 1$, there exists a random integer $N'_\epsilon(\omega)$ such that 
$$
\forall n\ge  N'_\epsilon(\omega), \quad 
\frac{1}{n^\alpha} \sum_{m=1}^{n-1} \alpha m^{\alpha-1}\Bigl|
  \frac{\E[X_{m+1}^2|\mathcal{F}_m]}{\alpha m^{\alpha-1}} - \mathfrak{i}(\alpha) \diversity 
\Bigr| \le \epsilon + (1+1)\epsilon = 3\epsilon
$$
Since $\epsilon$ is taken arbitrary, we get 
$$
\frac{1}{n^\alpha} \sum_{m=1}^{n-1} \alpha m^{\alpha-1}\Bigl|
  \frac{\E[X_{m+1}^2|\mathcal{F}_m]}{\alpha m^{\alpha-1}} - \mathfrak{i}(\alpha) \diversity 
\Bigr|  \asconv 0
$$
This finishes the proof.
\end{proof}

\begin{theorem}\label{thm:score_clt}
The score function has the asymptotic mixed normality:
  \begin{align*}
    \frac{\partial_\alpha \ell_n(\alpha;\mu)}{\sqrt{n^\alpha \mathfrak{i}(\alpha)}} \to \diversity^{1/2} N,  \quad \stable
  \end{align*}
\end{theorem}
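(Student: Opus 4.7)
The plan is to apply a stable martingale central limit theorem from \cite{hausler2015stable} to the martingale $M_n \equiv \partial_\alpha \ell_n(\alpha;\mu) = \sum_{m=0}^{n-1} X_{m+1}$, with increments $X_{m+1}$ introduced in \Cref{lm:martingale_diff} (noting $M_1 = 0$ since $\partial_\alpha \ell_1(\alpha;\mu)=0$). First I would confirm the martingale property $\E[X_{m+1}\mid \mathcal{F}_m] = 0$, which is the conditional score identity: since $\sum_{\pi\in\mathcal{P}_{m+1}}\PP(\Pi_{m+1}=\pi\mid \mathcal{F}_m)=1$ is a finite sum and $\partial_\alpha$ passes inside, one obtains $\E[\partial_\alpha \log \PP(\Pi_{m+1}\mid \mathcal{F}_m)\mid \mathcal{F}_m]=0$, which is exactly $\E[X_{m+1}\mid \mathcal{F}_m]=0$. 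Alternatively this can be checked directly from the explicit formulas for $X_{m+1}$ and the conditional probabilities $\PP(\Omega_{m+1,j}\mid \mathcal{F}_m)$ computed inside the proof of \Cref{lm:quadratic_variation}.

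Next I would verify the two hypotheses of the stable martingale CLT with normalization $s_n \equiv \sqrt{n^\alpha \mathfrak{i}(\alpha)}$. For the \emph{conditional variance}, dividing the conclusion of \Cref{lm:quadratic_variation} by $\mathfrak{i}(\alpha)$ gives
\begin{equation*}
s_n^{-2} \sum_{m=0}^{n-1} \E[X_{m+1}^2\mid \mathcal{F}_m] \to^p \diversity,
\end{equation*}
and $\diversity$ is $\mathcal{F}_\infty$-measurable by \Cref{theorem:Lp_converge}. For the \emph{conditional Lindeberg} condition, \Cref{lm:martingale_diff} yields
\begin{equation*}
|X_{m+1}| \le |\res_m| + |\res_m'| + \max\Bigl\{\tfrac{1}{1-\alpha},\ \tfrac{1}{\alpha}\Bigr\},
\end{equation*}
and the estimate $|\res_m|+|\res_m'|\lesssim \mathsf{k}_m^{-1}\log \mathsf{k}_m + m^{-1}\log m$ from \Cref{lm:martingale_diff} is uniformly bounded by a deterministic constant (using $\sup_{x\ge 1} x^{-1}\log x = 1/e$). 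Hence there is $C=C(\alpha,\underline\theta,\bar\theta)$ with $|X_{m+1}|\le C$ almost surely for every $m$, and since $s_n\to\infty$, for any $\epsilon>0$ the events $\{|X_{m+1}|>\epsilon s_n\}$ are empty for all $m\le n-1$ once $n$ is sufficiently large, so the Lindeberg sum vanishes trivially.

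Invoking the stable martingale CLT with the nested filtration $(\mathcal{F}_n)$ (see \cite[Chapter 6]{hausler2015stable}) then delivers $s_n^{-1}M_n \to \diversity^{1/2} N$ $\mathcal{F}_\infty$-stably, with $N\sim \mathcal{N}(0,1)$ independent of $\mathcal{F}_\infty$, which is exactly the claimed statement. Having already digested the main technical work into \Cref{lm:martingale_diff} and \Cref{lm:quadratic_variation}, the only remaining subtlety is bookkeeping: confirming that the cited version of the stable CLT accommodates a \emph{random}, $\mathcal{F}_\infty$-measurable limit variance $\diversity$ (rather than a deterministic constant). This is precisely the mixed-normal setting that stable convergence is designed to handle in \cite{hausler2015stable}, so the invocation is direct and I do not anticipate any other obstacle.
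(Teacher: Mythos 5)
Your proposal is correct and takes essentially the same route as the paper: the paper's proof of this theorem is simply to observe, via Lemma~\ref{lm:martingale_diff} and Lemma~\ref{lm:quadratic_variation}, that the martingale differences are bounded and that the normalized quadratic variation converges in probability to $\mathfrak{i}(\alpha)\diversity$, and then to invoke the stable martingale CLT (\cite[Theorem 6.23]{hausler2015stable}). You spell out two items the paper leaves implicit — the conditional score identity $\E[X_{m+1}\mid\mathcal{F}_m]=0$ and the reduction of the conditional Lindeberg condition to uniform boundedness of $|X_{m+1}|$ (using that $\mathsf{k}_m^{-1}\log\mathsf{k}_m$ and $m^{-1}\log m$ are bounded by $1/e$) — but these are exactly the details the paper is compressing, not a different argument.
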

\begin{proof}
  By \Cref{lm:martingale_diff} and \Cref{lm:quadratic_variation}, the martingale difference of the score function is bounded, and the limit of the quadratic variation is given by $n^{-\alpha} \sum_{m=0}^{n-1} \E[X_{m+1}^2|\mathcal{F}_{m}] \to^p \mathfrak{i}(\alpha) \diversity$. Then the claim follows from \cite[Theorem 6.23]{hausler2015stable}. 
\end{proof}

\subsection{Proof of \Cref{thm:mle_clt}}
Recall $\Psi_n(\alpha)=\mathsf{k}_n^{-1} \cdot \partial_\alpha \ell_n(\alpha;\delta_0)$.
Let us consider the event $\{1<\mathsf{k}_n<n\}$, which holds with high probability by \Cref{prop:mle_existence}. Under this event, $\hat{\alpha}_n$ satisfies 
$$
{\Psi}_n(\hat{\alpha}_n) = 0. 
$$
Furthermore, by the consistency $\hat\alpha_n\to^p\alpha$ from \Cref{cor:consistency_and_uniform_conv}, we have $\hat{\alpha}_n \in I$ for $I=[\alpha/2,  (1+\alpha)/2] \subset (0,1)$ with high probability. Then, by Taylor's expansion,  there exists a random sequence $\tilde{\alpha}_n \to^p \alpha$ such that 
\begin{align*}
  \Psi_n(\alpha) = \Psi_n(\alpha) - \Psi_n(\hat\alpha_n) = -\Psi_{n}'(\tilde{\alpha}_n) (\alpha-\hat{\alpha}_n), 
\end{align*}
and  $ -\Psi_{n}'(\tilde\alpha_n)\to^p \mathfrak{i}(\alpha)$ by \Cref{cor:consistency_and_uniform_conv}. On the other hand, by \eqref{eq:psi_n_def} and \eqref{eq:score}, the LHS can be written as 
\begin{align*}
  \Psi_n(\alpha) = \frac{1}{\mathsf{k}_n} \Bigl(\frac{\mathsf{k}_n-1}{ \alpha} - \sum_{j=2}^n {\mathsf{k}_{n,j}}\sum_{i=1}^{j-1}\frac{1}{i-\alpha}\Bigr) = \frac{1}{\mathsf{k}_n} \Bigl({\partial_\alpha \ell_n(\alpha; \mu)} + \int \dd \mu_{n,\mathsf{k}_n}(\theta)  \sum_{i=1}^{\mathsf{k}_n-1}\frac{\theta}{\alpha(\theta+i\alpha)}\Bigr), 
\end{align*}
where $\int \dd \mu_{n,\mathsf{k}_n}(\theta)  \sum_{i=1}^{\mathsf{k}_n-1}\frac{\theta}{\alpha(\theta+i\alpha)} = O(\log \mathsf{k}_n)$ by $\operatorname{supp}(\mu_{n,\mathsf{k}_n})\subset [\underline\theta, \bar\theta]$. Putting the above displays together, we obtain
\begin{align*}
  n^{\alpha/2} (-\Psi'_n(\tilde{\alpha}_n)) (\alpha-\tilde{\alpha}_n) &= n^{\alpha/2} \Psi_n(\alpha)\\
  &= \frac{n^{\alpha}}{\mathsf{k}_n} \frac{\partial_\alpha \ell_n(\alpha;\mu)}{n^{\alpha/2}} + \frac{n^{\alpha/2}}{\mathsf{k}_n} \int \dd \mu_{n,\mathsf{k}_n}(\theta)  \sum_{i=1}^{\mathsf{k}_n-1}\frac{\theta}{\alpha(\theta+i\alpha)}\\
  &= \frac{n^{\alpha}}{\mathsf{k}_n} \frac{\partial_\alpha \ell_n(\alpha;\mu)}{n^{\alpha/2}} + O(n^{\alpha/2} \mathsf{k}_n^{-1}\log \mathsf{k}_n),
\end{align*}
where $O(n^{\alpha/2} \mathsf{k}_n^{-1}\log \mathsf{k}_n)\asconv 0$ by $\mathsf{k}_n/n^\alpha \asconv \diversity>0$. Now, \Cref{thm:Lp_converge} implies 
\begin{align*}
  \frac{\partial_\alpha \ell_n(\alpha;\mu)}{\sqrt{n^\alpha \mathfrak{i}(\alpha)}} \to \diversity^{1/2} N \quad \stable.
\end{align*}
Combined with $\mathsf{k}_n/n^\alpha \asconv \diversity$ and Slutsky's lemma for the stable convergence (see \Cref{lm:CS_stable}\editline{-(2)}), we obtain 
\begin{align*}
  n^{\alpha/2} (-\Psi'_n(\tilde{\alpha}_n)) (\alpha-\tilde{\alpha}_n) \to \frac{1}{\diversity} \sqrt{\diversity \cdot \mathfrak{i}(\alpha)} N =  \sqrt{\frac{\mathfrak{i}(\alpha)}{\diversity}} N \quad  \stable.
\end{align*}
Combined with $ -\Psi_{n}'(\tilde\alpha_n)\to^p \mathfrak{i}(\alpha)$, we complete the proof. 

\begin{lemma}\label{lemma:qmle_bounded_l2}
  $n^{-\alpha/2}\mathsf{k}_n|\hat\alpha_n-\alpha|$ is bounded in $\mathcal{L}_2$. 
\end{lemma}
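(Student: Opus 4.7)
The plan is to reduce the problem to the $L^2$ bound on the score function $\partial_\alpha \ell_n(\alpha;\mu)$ established in Theorem \ref{theorem:fisher}, using the stationary condition satisfied by $\hat\alpha_n$ together with the deterministic lower bound on $|\Psi_n'|$ from Lemma \ref{lm:psi_n_property}. First, I would split the analysis according to whether the QMLE lies in the interior or on the boundary.

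On the interior event $E_n := \{1 < \mathsf{k}_n < n\}$, the QMLE satisfies $\Psi_n(\hat\alpha_n) = 0$ by \eqref{eq:qmle_cases}. Applying the mean value theorem to the $C^1$ function $\Psi_n$ on $(0,1)$ yields $\Psi_n(\alpha) = -\Psi_n'(\tilde\alpha_n)(\alpha - \hat\alpha_n)$ for some $\tilde\alpha_n \in (0,1)$. Since $|\Psi_n'(x)| \ge 1/2$ deterministically for $n \ge 2$ by Lemma \ref{lm:psi_n_property}, we obtain $\mathsf{k}_n|\hat\alpha_n - \alpha| \le 2 \mathsf{k}_n |\Psi_n(\alpha)|$. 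Using the identity
\[
\mathsf{k}_n \Psi_n(\alpha) = \partial_\alpha \ell_n(\alpha;\mu) + \int \dd\mu_{n,\mathsf{k}_n}(\theta)\sum_{i=1}^{\mathsf{k}_n-1}\frac{\theta}{\alpha(\theta+i\alpha)},
\]
together with the deterministic bound $|\int \dd \mu_{n,\mathsf{k}_n}(\theta)\sum_{i=1}^{\mathsf{k}_n-1}\tfrac{\theta}{\alpha(\theta+i\alpha)}| \le C(\alpha,\underline\theta,\bar\theta)\log \mathsf{k}_n \le C \log n$ arising from $\operatorname{supp}(\mu_{n,\mathsf{k}_n}) \subset [\underline\theta, \bar\theta]$, this gives
\[
\mathsf{k}_n|\hat\alpha_n - \alpha|\,\mathbf{1}_{E_n} \le 2|\partial_\alpha \ell_n(\alpha;\mu)| + 2C\log n.
\]
Taking $L^2$ norms and invoking Theorem \ref{theorem:fisher}, which gives $\|\partial_\alpha\ell_n(\alpha;\mu)\|_2 \lesssim n^{\alpha/2}$, together with $\log n = o(n^{\alpha/2})$, yields $n^{-\alpha/2}\|\mathsf{k}_n(\hat\alpha_n-\alpha)\mathbf{1}_{E_n}\|_2 \lesssim 1$.

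On the boundary event $E_n^c = \{\mathsf{k}_n = 1\} \cup \{\mathsf{k}_n = n\}$, I would use the crude deterministic bounds $|\hat\alpha_n - \alpha| \le 1$ and $\mathsf{k}_n \le n$ together with the tail estimates of Proposition \ref{prop:mle_existence}: $\PP(\mathsf{k}_n = 1) \lesssim n^{-(\alpha+\underline\theta)}$ and $\PP(\mathsf{k}_n = n) \lesssim n^{\bar\theta(\alpha^{-1}-1)}\alpha^n$. Then $\E[\mathsf{k}_n^2(\hat\alpha_n-\alpha)^2 \mathbf{1}_{E_n^c}] \le \PP(\mathsf{k}_n=1) + n^2\,\PP(\mathsf{k}_n=n)$, which is $o(n^\alpha)$ since the first term is polynomially decaying and the second term decays geometrically in $n$. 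Combining this with the interior estimate completes the proof.

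The main potential pitfall is making sure that the mean value theorem argument genuinely goes through with $\tilde\alpha_n \in (0,1)$ so that the deterministic lower bound $|\Psi_n'(\tilde\alpha_n)|\ge 1/2$ applies; this is automatic since both $\alpha$ and $\hat\alpha_n$ lie in $(0,1)$ on $E_n$, so any intermediate value does too. Beyond that, the argument is essentially an assembly of previously established ingredients (the score $L^2$ bound, the $|\Psi_n'|$ lower bound, and the boundary tail estimates), and no new probabilistic input is required.
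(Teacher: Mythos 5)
Your proof is correct and follows essentially the same route as the paper: decompose on $\{\mathsf{k}_n=1\}$, $\{\mathsf{k}_n=n\}$, $\{1<\mathsf{k}_n<n\}$, handle the boundary events via Proposition \ref{prop:mle_existence}, and on the interior use the mean value theorem for $\Psi_n$ combined with the deterministic bound $-\Psi_n'\ge 1/2$, the identity $\mathsf{k}_n\Psi_n(\alpha)=\partial_\alpha\ell_n(\alpha;\mu)+O(\log n)$, and the $L^2$ bound on the score from Theorem \ref{theorem:fisher}. The only (immaterial) difference is that you bound the boundary contributions crudely by $1$ and $n^2$ rather than using the exact values $\hat\alpha_n\in\{0,1\}$ from \eqref{eq:qmle_cases}; the geometric decay of $\PP(\mathsf{k}_n=n)$ absorbs this without issue.
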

\begin{proof}
\eqref{eq:qmle_cases} implies $\hat{\alpha}_n=0$ if $\mathsf{k}_n=1$ and $\hat{\alpha}_n=1$ if $\mathsf{k}_n=n$. Then, we can decompose the second moment $\E\bigl[\mathsf{k}_n^2|\hat\alpha_n-\alpha|^2\bigr]$ into three terms:
  \begin{align*}
      &n^{-\alpha} \E\bigl[\mathsf{k}_n^2|\hat\alpha_n-\alpha|^2\bigr] \\
      &= n^{-\alpha}\E\Bigl[\mathsf{k}_n^2|\hat\alpha_n-\alpha|^2 (\mathbbm{1}\{\mathsf{k}_n=1\} + \mathbbm{1}\{\mathsf{k}_n=n\}+  \mathbbm{1}\{1<\mathsf{k}_n<n\})\Bigr]\\
      &= n^{-\alpha} \alpha^2 \PP(\mathsf{k}_n=1) + 
      n^{2-\alpha} (1-\alpha)^2 \PP(\mathsf{k}_n=n) + n^{-\alpha} \E\bigl[\mathsf{k}_n^2|\hat\alpha_n-\alpha|^2 \mathbbm{1}\{1<\mathsf{k}_n<n\}\bigr]
  \end{align*}
Here, \Cref{prop:mle_existence} implies 
\begin{align*}
\PP(\mathsf{k}_n = 1) = o(1), \quad 
\PP(\mathsf{k}_n = n) \lesssim n^{\bar{\theta}(\alpha^{-1} - 1)} \alpha^n = o(n^{\alpha-2})
\end{align*}
so that 
\begin{align*}
 n^{-\alpha} \E\bigl[\mathsf{k}_n^2|\hat\alpha_n-\alpha|^2\bigr] &= n^{-\alpha} \E\bigl[\mathsf{k}_n^2|\hat\alpha_n-\alpha|^2 \mathbbm{1}\{1<\mathsf{k}_n<n\}\bigr] \\
 &+ \editline{ n^{-\alpha} \E\bigl[|\hat\alpha_n-\alpha|^2 \mathbbm{1}\{\mathsf{k}_n=1\}\bigr] + n^{2-\alpha} \E\bigl[|\hat\alpha_n-\alpha|^2 \mathbbm{1}\{\mathsf{k}_n=n\}\bigr]},  
\end{align*}
\editline{Here, noting $|\hat\alpha_n-\alpha|^2 \le 1$, the second and the third terms are $o(1)$ by $\PP(\mathsf{k}_n = 1) = o(1)$ and $\PP(\mathsf{k}_n = n) = o(n^{\alpha-2})$, respectively. }
Under the event $\{1<\mathsf{k}_n<n\}$, by the calculation right before \Cref{lemma:qmle_bounded_l2},  there exists a random sequence $\tilde{\alpha}_n\in(0,1)$ such that 
  $$
  \mathsf{k}_n (-\Psi_n'(\tilde\alpha_n)) (\alpha-\hat\alpha_n) = \partial_\alpha\ell_n(\alpha;\mu) + \int \dd \mu_{n,\mathsf{k}_n}(\theta)  \sum_{i=1}^{\mathsf{k}_n-1}\frac{\theta}{\alpha(\theta+i\alpha)}. 
  $$ 
  Here, the absolute value of the second term is bounded by $C \log n$ up to some constant $C=C(\alpha, \underline\theta, \bar\theta)$. Using the uniform lower bound $\inf_{x\in (0,1)} (-\Psi_n'(x))\ge 1/2$ from \Cref{lm:psi_n_property}, we get 
  $$
  \mathbbm{1}\{1<\mathsf{k}_n<n\} \mathsf{k}_n|\alpha-\hat\alpha_n| \le 2\bigl(|\partial_\alpha\ell_n(\alpha;\mu)| + C \log n\bigr).
  $$
  Thus, combined with $\E[\partial_\alpha\ell_n(\alpha;\mu)^2]\asymp n^{\alpha}$ from \Cref{theorem:fisher}, we obtain
  $$
  n^{-\alpha}
\E\bigl[\mathsf{k}_n^2|\hat\alpha_n-\alpha|^2 \mathbbm{1}\{1<\mathsf{k}_n<n\}\bigr] \le 8 n^{-\alpha} \Bigl(\E\bigl[
  |\partial_\alpha\ell_n(\alpha;\mu)|^2 \bigr]+ C^2 (\log n)^2\Bigr)
 = O(1). 
  $$
  This completes the proof. 
\end{proof}

\section{Limit distribution of f-Divergence}
\subsection{Proof of \Cref{thm:f_div}}
Let $f$ be a convex function such that it is locally twice differentiable and its second derivative is Hölder continuous at $1$. That is, there exists $\delta > 0, C > 0$, and $\beta > 0$ such that $f$ is twice differentiable on $(1 - \delta, 1 + \delta)$, and
$|f''(1 + x) - f''(1)| \le C |x|^\beta$ for all $|x| < \delta$. Now we consider the following event 
$$
\Omega_{n} \equiv \Bigl\{(2\alpha-1)\vee \frac{\alpha}{2} \le \hat\alpha_n \le \frac{\alpha+1}{2} \wedge 2\alpha \Bigr\}
$$
By the consistency $\hat\alpha_n\to^p\alpha$ and $\alpha\in (0,1)$, the event $\Omega_n$ holds with high probability. Furthermore, under $\Omega_{n}$, it can be easily checked that 
\begin{align*}
  \frac{1}{2} \le  \frac{\hat\alpha_n}{\alpha} \le 2, \quad \frac{1}{2}\le \frac{|U_i|-\hat\alpha_n}{|U_i|-\alpha} \le 2 \quad \text{for all $i\in \{1, \dots \mathsf{k}_n\}$}. 
\end{align*}
Throughout this proof, we denote $A_n \lesssim B_n$ whenever there exists a deterministic constant $C=C(\alpha, \underline\theta, \bar\theta, f)$ such that $A_n\le C \cdot B_n$. 
Note 
\begin{align*}
  \frac{1}{\simp_{n,0}} = \frac{v_{n,\mathsf{k}_n}}{v_{n+1, \mathsf{k}_n+1}} =
   \int \diff \mu_{n+1, \mathsf{k}_n+1} (\theta)  \frac{\theta+ n}{\theta+\mathsf{k}_n\alpha}.
\end{align*}
Using $\operatorname{supp}(\mu_{n+1, \mathsf{k}_n+1})\subset [\underline{\theta}, \bar\theta]$ with $-\alpha < \underline{\theta} < \bar\theta < +\infty$, we have 
\begin{align}
  \Bigl|\frac{1}{\simp_{n,0}} - \frac{n}{\mathsf{k}_n\alpha}\Bigr| &\le \sup_{\theta\in [\underline{\theta}, \bar\theta]}\Bigl|\frac{\theta+ n}{\theta+\mathsf{k}_n\alpha} - \frac{n}{\mathsf{k}_n\alpha}\Bigr|  \lesssim \frac{n}{\mathsf{k}_n^2} 
  \label{eq:p0_1}\\
  \Bigl|\frac{\hat\simp_{n,0}}{\simp_{n,0}} - \frac{\hat\alpha_n}{\alpha}\Bigr| &\lesssim \frac{1}{\mathsf{k}_n} && \text{by $\hat\simp_{n,0}=\frac{\mathsf{k}_n\hat\alpha_n}{n}$ and \eqref{eq:p0_1}}\label{eq:p0_2}
  \\
  \Bigl|\frac{\hat\simp_{n,0}}{\simp_{n,0}} - 1\Bigr| &\lesssim \frac{1}{\mathsf{k}_n} + |\hat\alpha_n-\alpha| && \text{by \eqref{eq:p0_2}}. \label{eq:p0_3}
\end{align}
By the same argument, we have 
\begin{align*}
  \frac{1}{\simp_{n,i}} = \frac{v_{n,\mathsf{k}_n}}{v_{n+1, \mathsf{k}_n}} \frac{1}{|U_i|-\alpha} =\frac{1}{|U_i|-\alpha} \int \diff \mu_{n+1, \mathsf{k}_n}(\theta) (\theta + n),
\end{align*}
which leads to 
\begin{align}
  \Bigl|
  \frac{1}{\simp_{n,i}} - \frac{n}{|U_i|-\alpha} 
\Bigr| &\lesssim \frac{1}{|U_i|-\alpha} \label{eq:pi_1}\\
\sup_{i\in\{1, \dots, \mathsf{k}_n\}}\Bigl|\frac{\hat\simp_{n,i}}{\simp_{n,i}} -  \frac{|U_i|-\hat\alpha_n}{|U_i|-\alpha}\Bigr| &\lesssim \frac{1}{n} && \text{by \eqref{eq:pi_1}, $\hat\simp_{n,i} = \frac{|U_i|-\hat\alpha_n}{n}$, and $\frac{|U_i|-\hat\alpha_n}{|U_i|-\alpha}\le 2$ under $\Omega_{n}$}
\label{eq:pi_2}\\
  \sup_{i\in\{1, \dots, \mathsf{k}_n\}}\Bigl|\frac{\hat\simp_{n,i}}{\simp_{n,i}} - 1\Bigr| &\lesssim \frac{1}{n} + |\hat\alpha_n - \alpha| && \text{by \eqref{eq:pi_2} and $|U_i| \ge 1$} \label{eq:pi_3}
\end{align}
Combining \eqref{eq:p0_3} and \eqref{eq:pi_3}, noting $\mathsf{k}_n^{-1}=O_p(n^{-\alpha})$ and $(\hat\alpha_n-\alpha)=O_p(n^{-\alpha/2})$, we obtain
\begin{align}
\sup_{i=\in\{0, 1, \dots, \mathsf{k}_n\}}  \Bigl|\frac{\hat\simp_{n,i}}{\simp_{n,i}} - 1\Bigr| \lesssim \frac{1}{\mathsf{k}_n} + \frac{1}{n} + |\hat\alpha_n-\alpha| = O_p(n^{-\alpha/2}).\label{eq:pi_3_uniform_bound}   
\end{align}
By Taylor's theorem of $f(x)$ around $x=1$ up to the second order term, we have 
\begin{align*}
    \mathsf{D}_f(\bm{\hat\simp}_n || \bm{\simp}_n) = \sum_{i=1}^{\mathsf{k}_n}\simp_{n,i} f(\hat\simp_{n,i}/{\simp}_{n,i}) = \sum_{i=1}^{\mathsf{k}_n}\simp_{n,i} f(1) + \simp_{n,i} f'(1)\Bigl(\frac{\hat\simp_{n,i}}{{\simp}_{n,i}}-1\Bigr) + \simp_{n,i} \frac{f''(1+c_{n,i})}{2}\Bigl(\frac{\hat\simp_{n,i}}{{\simp}_{n,i}}-1\Bigr)^2.
\end{align*}
Here $c_{n,i}$ is a random variable between $1$ and $\hat\simp_{n,i}/\simp_{n,i}$. By \eqref{eq:pi_3_uniform_bound},  $|c_{n,i}|$ is uniformly bounded as 
\begin{align}\label{eq:c_n_uniform_bound}
\sup_{i=\in\{0, 1, \dots, \mathsf{k}_n\}}  |c_{n,i}|\le \sup_{i=\in\{0, 1, \dots, \mathsf{k}_n\}}   \Bigl|\frac{\hat\simp_{n,i}}{\simp_{n,i}} - 1\Bigr| = O_p(n^{-\alpha/2})  
\end{align}
Substituting $f(1)=0$ and $\sum_{i=0}^{\mathsf{k}_n} \simp_{n,i}=\sum_{i=0}^{\mathsf{k}_n} \hat\simp_{n,i}=1$ to the previous display of $\mathsf{D}_f(\bm{\hat\simp}_n || \bm{\simp}_n)$, we are left with
$$
\mathsf{D}_f(\bm{\hat\simp}_n || \bm{\simp}_n) = \sum_{i=0}^{\mathsf{k}_n} \simp_{n,i} \frac{f''(1+c_{n,i})}{2}\Bigl(\frac{\hat\simp_{n,i}}{{\simp}_{n,i}}-1\Bigr)^2.
$$
By the local Hölder continuity of $f''$ around $1$, we have $|f''(1 + x) - f''(1)| \le C |x|^\beta$ for all $|x| < \delta$ where $\delta, \beta, C$ are fixed positive constant. Then, under the event $\{\sup_{i}|c_{n,i}| \le \delta\}$, which holds with high probability by
\eqref{eq:c_n_uniform_bound}, it holds that 
\begin{align*}
\Bigl|
  &\mathsf{D}_f(\bm{\hat\simp}_n || \bm{\simp}_n) - \frac{f''(1)}{2} \sum_{i=0}^{\mathsf{k}_n} \simp_{n,i} \Bigl(\frac{\hat\simp_{n,i}}{{\simp}_{n,i}}-1\Bigr)^2
\Bigr| \\
&\le  \frac{1}{2} \sup_{i\in\{0,\dots \mathsf{k}_n\}} |f''(1+c_{n,i})-f''(1)| \cdot 
\sum_{i=0}^{\mathsf{k}_n} \simp_{n,i} \Bigl(\frac{\hat\simp_{n,i}}{{\simp}_{n,i}}-1\Bigr)^2\\
&\lesssim \Bigl(\sup_{i\in \{0\dots, \mathsf{k}_n\}
} |c_{n,i}|\Bigr)^\beta \sum_{i=0}^{\mathsf{k}_n}\simp_{n,i}\Bigl(\frac{\hat\simp_{n,i}}{{\simp}_{n,i}}-1\Bigr)^2\\
&= O_p(n^{-\alpha\beta/2}) \sum_{i=0}^{\mathsf{k}_n}\simp_{n,i}\Bigl(\frac{\hat\simp_{n,i}}{{\simp}_{n,i}}-1\Bigr)^2 && \text{by \eqref{eq:c_n_uniform_bound}}. 
\end{align*}
Rearranging the above display, we are left with 
$$
\mathsf{D}_f(\bm{\hat\simp}_n || \bm{\simp}_n) = \Bigl(
\frac{f''(1)}{2} + O_p(n^{-\alpha\beta/2})
\Bigr) \sum_{i=0}^{\mathsf{k}_n}\simp_{n,i} \Bigl(\frac{\hat\simp_{n,i}}{{\simp}_{n,i}}-1\Bigr)^2
$$
Next, let us derive the limit of $\sum_{i=0}^{\mathsf{k}_n}\simp_{n,i}\Bigl(\frac{\hat\simp_{n,i}}{{\simp}_{n,i}}-1\Bigr)^2$. Note 
\begin{align*}
  \sum_{i=0}^{\mathsf{k}_n}\simp_{n,i}\Bigl(\frac{\hat\simp_{n,i}}{{\simp}_{n,i}}-1\Bigr)^2 = \frac{(\hat\simp_{n,0}-\simp_{n,0})^2}{\simp_{n,0}} + \sum_{i=1}^{\mathsf{k}_n}\frac{(\hat\simp_{n,i}-\simp_{n,i})^2}{\simp_{n,i}}
\end{align*}
Now we define $\Delta_{n,0}$ and $\delta_{n,0}$ as 
$$
\Delta_{n,0}\equiv \frac{\mathsf{k}_n^2}{n}\Bigl(\frac{1}{\simp_{n,0}} - \frac{n}{\mathsf{k}_n\alpha}\Bigr), \quad \delta_{n,0}\equiv n\Bigl(|\simp_{n,0} - \hat\simp_{n,0}| - \frac{\mathsf{k}_n}{n} |\alpha-\hat\alpha_n|\Bigr)
$$
so that 
$$
  \frac{(\hat\simp_{n,0}-\simp_{n,0})^2}{\simp_{n,0}} = \Bigl(\frac{n}{\mathsf{k}_n\alpha} + \frac{n}{\mathsf{k}_n^2}\Delta_{n,0}\Bigr) \cdot \Bigl(
\frac{\mathsf{k}_n}{n} |\alpha-\hat\alpha_n| + \frac{\delta_{n,0}}{n}
  \Bigr)^2
$$
Here, 
\eqref{eq:p0_1} and \eqref{eq:delta_0_bound} yield  
$|\Delta_{n,0}|  \lesssim 1$ and $|\delta_{n,0}|\lesssim 1$, and hence 
$$
\frac{(\hat\simp_{n,0}-\simp_{n,0})^2}{\simp_{n,0}}
= \Bigl(\frac{n}{\mathsf{k}_n\alpha} + O_p(n^{1-2\alpha})\Bigr) \cdot \Bigl(
\frac{\mathsf{k}_n}{n} |\alpha-\hat\alpha_n| + O_p(n^{-1})\Bigr)^2. 
$$
Expanding the square, using
$n/\mathsf{k}_n = O_p(n^{1-\alpha})$ and $\mathsf{k}_n |\alpha-\hat\alpha_n|/n = O_p(n^{\alpha-\alpha/2 - 1})= O_p(n^{\alpha/2 - 1})$, we get 
\begin{align*}
  \frac{(\hat\simp_{n,0}-\simp_{n,0})^2}{\simp_{n,0}}
&= \Bigl(\frac{n}{\mathsf{k}_n\alpha} + O_p(n^{1-2\alpha})\Bigr) \cdot \Bigl(
\frac{\mathsf{k}_n^2}{n^2} |\alpha-\hat\alpha_n|^2 + O_p(n^{\alpha/2-2})
  \Bigr)\\
    &= \frac{\mathsf{k}_n}{n\alpha} |\alpha-\hat\alpha_n|^2 + O_p(n^{1-\alpha})O_p(n^{\alpha/2-2}) + O_p(n^{1-2\alpha}) O_p(n^{\alpha-2}) + O_p(n^{1-2\alpha + \alpha/2 -2})\\
  &= \frac{\mathsf{k}_n}{n\alpha} |\alpha-\hat\alpha_n|^2 + O_p(n^{-1-\alpha/2}). 
\end{align*}

Now define $\Delta_{n,i}$ and $\delta_{n,i}$ for each $i\in \{1, \dots, \mathsf{k}_n\}$ as
\begin{align*}
  \Delta_{n,i} \equiv \frac{1}{\simp_{n,i}} - \frac{n}{|U_i|-\alpha}, \quad 
  \delta_{n,i} \equiv \frac{n^2}{|U_i|-\alpha}\Bigl(|\hat\simp_{n,i}-\simp_{n,i}| - \frac{|\hat\alpha_n-\alpha|}{n}\Bigr)
\end{align*}
so that 
\begin{align*}
\frac{(\hat\simp_{n,i}-\simp_{n,i})^2}{\simp_{n,i}} &= \Bigl(\frac{n}{|U_i|-\alpha} + \Delta_{n,i}\Bigr) \Bigl(
  \frac{|\hat\alpha_n-\alpha|}{n} + \frac{|U_i|-\alpha}{n^2}\delta_{n,i}
  \Bigr)^2\\
  &= \frac{|\hat\alpha_n-\alpha|^2}{n\alpha(|U_i|-\alpha)} + \frac{|U_i|-\alpha}{n^3}\delta_{n,i}^2 +  2  \frac{|\hat\alpha_n-\alpha|}{n^2} \delta_{n,i} + \Delta_{n,i}\Bigl(
  \frac{|\hat\alpha_n-\alpha|}{n} + \frac{|U_i|-\alpha}{n^2}\delta_{n,i}
  \Bigr)^2
\end{align*}
By \eqref{eq:pi_1} and \eqref{eq:delta_i_bound}, it holds that $\sup_{i\in \{1, \dots, \mathsf{k}_n\}}|\Delta_{n,i}|\lesssim 1$ and $\sup_{i\in \{1, \dots, \mathsf{k}_n\}}|\delta_{n,i}|\lesssim 1$. Therefore, we get 
\begin{align*}
  &\sum_{i=1}^{\mathsf{k}_n} \Bigl|\frac{(\hat\simp_{n,i}-\simp_{n,i})^2}{\simp_{n,i}} - \frac{|\hat\alpha_n-\alpha|^2}{n\alpha(|U_i|-\alpha)} \Bigr| \\
  &\lesssim \sum_{i=1}^{\mathsf{k}_n} \Bigl(\frac{|U_i|}{n^3} + \frac{|\hat\alpha_n-\alpha|}{n^2} + \frac{|\hat\alpha_n-\alpha|^2}{n^2} + \frac{|U_i|^2}{n^4}\Bigr)\\
  & \lesssim \sum_{i=1}^{\mathsf{k}_n}  \Bigl( \frac{|U_i|}{n^3} + \frac{|\hat\alpha_n-\alpha|}{n^2}\Bigr) && \text{by $|U_i|\le n$. }\\
  &=\frac{1}{n^2} + \frac{|\hat\alpha_n-\alpha|\mathsf{k}_n}{n^2} && \text{by $\sum_i |U_i|=n$}\\
  &= O_p(n^{-2}) + O_p(n^{\alpha/2-2}) && \text{by $(\hat\alpha_n-\alpha)=O_p(n^{-\alpha/2})$ and $\mathsf{k}_n=O_p(n^\alpha)$}. 
\end{align*}
Thus, 
\begin{align*}
  \sum_{i=1}^{\mathsf{k}_n}\frac{(\hat\simp_{n,i}-\simp_{n,i})^2}{\simp_{n,i}} &= \sum_{i=1}^{\mathsf{k}_n} \frac{|\hat\alpha_n-\alpha|^2}{n\alpha(|U_i|-\alpha)} + O_p(n^{\alpha/2-2}) = \sum_{j=1}^n \mathsf{k}_{n,j} \frac{|\hat\alpha_n-\alpha|^2}{n\alpha(j-\alpha)} +  O_p(n^{\alpha/2-2})
\end{align*}
Combining them all together, 
\begin{align*}
\sum_{i=0}^{\mathsf{k}_n}\frac{(\hat\simp_{n,i}-\simp_{n,i})^2}{\simp_{n,i}} &= \frac{\mathsf{k}_n}{n\alpha} |\alpha-\hat\alpha_n|^2  + \sum_{j=1}^n \mathsf{k}_{n,j} \frac{|\hat\alpha_n-\alpha|^2}{n \alpha(j-\alpha)} + O_p(n^{-1-\alpha/2}) + O_p(n^{-2+\alpha/2})  \\
&= \frac{\alpha \mathsf{k}_n}{n}|\hat\alpha_n-\alpha|^2 \Bigl(
  \frac{1}{\alpha^2} + \sum_{j=1}^{n} \frac{\mathsf{k}_{n,j}}{\mathsf{k}_n} \frac{1}{\alpha^2(j-\alpha)}
\Bigr) + O_p(n^{-1-\alpha/2}) && \text{by $\alpha\in(0,1)$}
\end{align*}
and hence 
$$
\mathsf{D}_f(\bm{\hat\simp}_n || \bm{\simp}_n) = \Bigl(
\frac{f''(1)}{2} + O_p(n^{-\alpha\beta/2})
\Bigr) \Bigl(\frac{\alpha \mathsf{k}_n}{n}|\hat\alpha_n-\alpha|^2 \Bigl(
  \frac{1}{\alpha^2} + \sum_{j=1}^{n} \frac{\mathsf{k}_{n,j}}{\mathsf{k}_n} \frac{1}{\alpha^2(j-\alpha)}
\Bigr) + O_p(n^{-1-\alpha/2})\Bigr)
$$
By \Cref{lm:bounded_conv} and \Cref{prop:fisher_info_sibuya}, 
$$
  \frac{1}{\alpha^2} + \sum_{j=1}^{n} \frac{\mathsf{k}_{n,j}}{\mathsf{k}_n} \frac{1}{\alpha^2(j-\alpha)} \to^p \frac{1}{\alpha^2} + \sum_{j=1}^\infty \mathfrak{p}_\alpha(j) \frac{1}{\alpha^2(j-\alpha)} = \mathfrak{i}(\alpha)
$$
while $\sqrt{\mathsf{k}_n \mathfrak{i}(\alpha)} (\hat\alpha_n-\alpha) \to N$ $\stable$ by \Cref{thm:mle_clt}. By the continuous mapping theorem (\editline{\Cref{lm:CS_stable}{-(3)}}) \editline{for the function $g:x\mapsto|x|$} and Slutsky's lemma from \Cref{lm:CS_stable}\editline{-(2)}, we have 
$$
 \mathsf{k}_n|\hat\alpha_n-\alpha|^2 \Bigl(
  \frac{1}{\alpha^2} + \sum_{j=1}^{n} \frac{\mathsf{k}_{n,j}}{\mathsf{k}_n} \frac{1}{\alpha^2(j-\alpha)}
\Bigr) \to |N|^2 \mathfrak{i}(\alpha)^{-1}  \mathfrak{i}(\alpha) = |N|^2 \quad \stable
$$
where $N\sim \mathcal{N}(0,1) \indep \mathcal{F}_\infty$.  
This in particular means that the LHS is $O_p(1)$. 
Putting the above displays together, we obtain 
\begin{align*}
&n \cdot \mathsf{D}_f(\bm{\hat\simp}_n || \bm{\simp}_n) \\
&= n\cdot \Bigl(
\frac{f''(1)}{2} + O_p(n^{-\alpha\beta/2})
\Bigr) \Bigl(\frac{\alpha \mathsf{k}_n}{n}|\hat\alpha_n-\alpha|^2 \Bigl(
  \frac{1}{\alpha^2} + \sum_{j=1}^{n} \frac{\mathsf{k}_{n,j}}{\mathsf{k}_n} \frac{1}{\alpha^2(j-\alpha)}
\Bigr) + O_p(n^{-1-\alpha/2})\Bigr)  \\
&= \Bigl(
\frac{f''(1)}{2} + O_p(n^{-\alpha\beta/2})
\Bigr) \Bigl({\alpha \mathsf{k}_n}|\hat\alpha_n-\alpha|^2 \Bigl(
  \frac{1}{\alpha^2} + \sum_{j=1}^{n} \frac{\mathsf{k}_{n,j}}{\mathsf{k}_n} \frac{1}{\alpha^2(j-\alpha)}
\Bigr) + O_p(n^{-\alpha/2})\Bigr) \\
&= \frac{f''(1)}{2} {\alpha \mathsf{k}_n} |\hat\alpha_n-\alpha|^2 \Bigl(
  \frac{1}{\alpha^2} + \sum_{j=1}^{n} \frac{\mathsf{k}_{n,j}}{\mathsf{k}_n} \frac{1}{\alpha^2(j-\alpha)}
\Bigr)  + O_p(n^{- \alpha\beta/2}) + O_p(n^{-\alpha/2})\\
&\to \frac{f''(1)}{2}\alpha N^2  \quad \stable,  
\end{align*}
which completes the proof.

\subsection{Proof of \Cref{theorem:tv}}
\Cref{theorem:tv} follows from \Cref{thm:mle_clt}, \Cref{lemma:qmle_bounded_l2}, and \Cref{prop:tv_decompose} below. 
  \begin{lemma}\label{prop:tv_decompose}
  \begin{align*}
    \Bigl|\tv(\bm\simp_n, \bm{\hat\simp}_n) - \frac{\mathsf{k}_n}{n}|\hat\alpha_n - \alpha|\Bigr|\le  \frac{\bar{\theta}-\underline{\theta}}{n+\underline{\theta}} 
  \end{align*}
\end{lemma}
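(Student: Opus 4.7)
The plan is to expand the TV distance coordinate by coordinate and use \Cref{lm:concentrate_v_nk} to linearize each ratio $v_{n+1,\cdot}/v_{n,\mathsf{k}_n}$ around its leading order, so that the dependence on $\hat\alpha_n$ and $\alpha$ separates cleanly from the rest. Writing $\Delta \equiv \hat\alpha_n-\alpha$ and introducing shorthand $s_n \equiv \frac{v_{n+1,\mathsf{k}_n+1}}{v_{n,\mathsf{k}_n}} - \frac{\mathsf{k}_n\alpha}{n}$ and $r_n \equiv \frac{v_{n+1,\mathsf{k}_n}}{v_{n,\mathsf{k}_n}} - \frac{1}{n}$, \Cref{lm:concentrate_v_nk} gives $|s_n|\le \epsilon_n$ and $|r_n|\le \epsilon_n/n$ where $\epsilon_n \equiv \frac{\bar\theta-\underline\theta}{n+\underline\theta}$.

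Next, I would compute the coordinate-wise differences. The zero coordinate is $\hat\simp_{n,0}-\simp_{n,0} = \frac{\mathsf{k}_n\Delta}{n} - s_n$, while for $i\in\{1,\ldots,\mathsf{k}_n\}$ a direct substitution gives
\begin{align*}
\hat\simp_{n,i}-\simp_{n,i} = \frac{|U_i|-\hat\alpha_n}{n} - \Bigl(\tfrac{1}{n}+r_n\Bigr)(|U_i|-\alpha) = -\frac{\Delta}{n} - r_n(|U_i|-\alpha).
\end{align*}
Thus, using the identity $\sum_{i=1}^{\mathsf{k}_n}(|U_i|-\alpha) = n-\mathsf{k}_n\alpha$ and applying the triangle inequality to every term,
\begin{align*}
2\tv(\bm{\hat\simp}_n,\bm\simp_n) &\le \tfrac{\mathsf{k}_n|\Delta|}{n}+|s_n| + \sum_{i=1}^{\mathsf{k}_n}\Bigl(\tfrac{|\Delta|}{n} + |r_n|(|U_i|-\alpha)\Bigr) = \tfrac{2\mathsf{k}_n|\Delta|}{n} + |s_n| + |r_n|(n-\mathsf{k}_n\alpha),
\end{align*}
while the reverse triangle inequality $|a+b|\ge |a|-|b|$ yields the analogous lower bound. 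Combining these gives
\begin{align*}
\Bigl|\tv(\bm{\hat\simp}_n,\bm\simp_n) - \tfrac{\mathsf{k}_n|\Delta|}{n}\Bigr| \le \tfrac{1}{2}|s_n| + \tfrac{1}{2}|r_n|(n-\mathsf{k}_n\alpha) \le \tfrac{\epsilon_n}{2}\Bigl(1 + \tfrac{n-\mathsf{k}_n\alpha}{n}\Bigr) \le \epsilon_n,
\end{align*}
since $1+(n-\mathsf{k}_n\alpha)/n\le 2$, yielding the desired bound.

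There is no genuine obstacle here; the only subtlety is to make sure that the signs line up so that both the triangle and reverse-triangle inequalities produce the linear term $\mathsf{k}_n|\Delta|/n$ rather than a pointwise cancellation (one needs to separate the $i=0$ term from $i\ge 1$ because the coefficient of $\Delta$ flips sign between them). The final estimate follows from $(n-\mathsf{k}_n\alpha)/n \le 1$ with $\alpha>0$ and $\mathsf{k}_n\ge 1$.
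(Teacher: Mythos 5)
Your proposal is correct and follows essentially the same route as the paper: decompose the TV distance coordinate-wise, use \Cref{lm:concentrate_v_nk} to control $s_n$ and $r_n$, and apply the (reverse) triangle inequality together with $\sum_{i=1}^{\mathsf{k}_n}(|U_i|-\alpha)=n-\mathsf{k}_n\alpha$. The paper phrases the same estimate as two direct bounds $\bigl||\hat\simp_{n,0}-\simp_{n,0}|-\tfrac{\mathsf{k}_n}{n}|\Delta|\bigr|\le\epsilon_n$ and $\bigl|\sum_{i\ge 1}|\hat\simp_{n,i}-\simp_{n,i}|-\tfrac{\mathsf{k}_n}{n}|\Delta|\bigr|\le\epsilon_n$ before averaging, but the underlying calculation is identical to yours.
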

\begin{proof}[Proof of \Cref{prop:tv_decompose}]
  The TV distance can be decomposed into two terms:
$$
\tv(\bm{\hat\simp}_n, \bm\simp_n)  = \frac{1}{2}|\hat\simp_{n,0}-\simp_{n,0}| + \frac{1}{2}\sum_{i=1}^{\mathsf{k}_n} |\hat\simp_{n,i}-\simp_{n,i}|, 
$$
where
\begin{align*}
  |\simp_{n,0} - \hat\simp_{n,0}| &= \Bigl|\frac{v_{n+1, \mathsf{k}_{n}+1}}{v_{n, \mathsf{k}_n}}-\frac{\mathsf{k}_n}{n}\hat\alpha_n\Bigr| =  \Bigl|\frac{v_{n+1, \mathsf{k}_{n}+1}}{v_{n, \mathsf{k}_n}} - \frac{\mathsf{k}_n}{n}\alpha  + \frac{\mathsf{k}_n}{n}(\alpha -\hat\alpha_n)\Bigr|
\end{align*}
Combined with $|\frac{v_{n+1, \mathsf{k}_{n}+1}}{v_{n, \mathsf{k}_n}} - \frac{\mathsf{k}_n}{n}\alpha| \le \frac{\bar{\theta}-\underline\theta}{n+\underline{\theta}}$ from \Cref{lm:concentrate_v_nk}, we have 
\begin{align}\label{eq:delta_0_bound}
  \Bigl|  |\simp_{n,0} - \hat\simp_{n,0}| - \frac{\mathsf{k}_n}{n} |\alpha-\hat\alpha_n| \Bigr|\le \Bigl|\frac{v_{n+1, \mathsf{k}_{n}+1}}{v_{n, \mathsf{k}_n}} - \frac{\mathsf{k}_n}{n}\alpha\Bigr| \le \frac{\bar{\theta}-\underline\theta}{n+\underline{\theta}}. 
\end{align}
As for the second term$\sum_{i=1}^{\mathsf{k}_n} |\simp_{n,i} - \hat\simp_{n,i}|$. Note
\begin{align*}
  |\simp_{n,i} - \hat\simp_{n,i}| &=  \Bigl|
    \frac{v_{n+1, \mathsf{k}_n}}{v_{n,\mathsf{k}_n}}(|U_i|-\alpha) - \frac{|U_i|-\hat{\alpha}_n}{n}
  \Bigr| = \Bigl|\Bigl(\frac{v_{n+1, \mathsf{k}_n}}{v_{n, \mathsf{k}_n}} -\frac{1}{n}\Bigr) (|U_i|-\alpha) + \frac{\hat\alpha_n-\alpha}{n}\Bigr|.
\end{align*}
Using $|\frac{v_{n+1, \editline{\mathsf{k}_n}}}{v_{n, \mathsf{k}_n}} -\frac{1}{n}\alpha| \le \frac{\bar{\theta}-\underline{\theta}}{n(n+\underline{\theta})}$ from \Cref{lm:concentrate_v_nk}
\begin{align}
  \Bigl| |\simp_{n,i} - \hat\simp_{n,i}| - \frac{|\hat\alpha_n-\alpha|}{n} \Bigr|
  \le \Bigl|\frac{v_{n+1, \mathsf{k}_n}}{v_{n, \mathsf{k}_n}} -\frac{1}{n}\Bigr|(|U_i|-\alpha) \le \frac{\bar{\theta}-\underline{\theta}}{n(n+\underline{\theta})} \cdot (|U_i|-\alpha)\label{eq:delta_i_bound}
\end{align}
Therefore, we get 
\begin{align*}
  \Bigl|\sum_{i=1}^{\mathsf{k}_n} |\simp_{n,i} - \hat\simp_{n,i}| - \frac{\mathsf{k}_n}{n}|\hat\alpha_n-\alpha|\Bigr| &\le \sum_{i=1}^{\mathsf{k}_n}   \Bigl| |\simp_{n,i} - \hat\simp_{n,i}| - \frac{|\hat\alpha_n-\alpha|}{n} \Bigr|\\
  &\le \sum_{i=1}^{\mathsf{k}_n}  \frac{\bar{\theta}-\underline{\theta}}{n(n+\underline{\theta})} \cdot (|U_i|-\alpha)\\
  &= \frac{\bar{\theta}-\underline{\theta}}{n(n+\underline{\theta})} (n-\alpha \mathsf{k}_n)\\
  &\le \frac{\bar{\theta}-\underline{\theta}}{n+\underline{\theta}} 
\end{align*}
This completes the proof. 
\end{proof}

\subsection{Proof of \Cref{theorem:ratio_consistency}}
Let $C_n \equiv n \frac{v_{n+1, \mathsf{k}_n}}{v_{n, \mathsf{k}_n}}  - 1$. Note $|C_n|\le \frac{\bar{\theta}+\alpha}{n+\underline\theta}$ by \Cref{lm:concentrate_v_nk}. The ratio $\frac{\sum_{i\in I}\simp_{n,i}}{\sum_{i\in I}\hat \simp_{n,i}}$ can be expressed as 
\begin{align*}
  \frac{\sum_{i\in I}\simp_{n,i}}{\sum_{i\in I}\hat \simp_{n,i}} = n \frac{v_{n+1, \mathsf{k}_n}}{v_{n, \mathsf{k}_n}} \frac{\sum_{i\in I} (|U_i|-\alpha)}{\sum_{i\in I} (|U_i|-\hat{\alpha}_n)} = (1+C_n) \frac{\frac{1}{|I|}\sum_{i\in I}|U_i|-\alpha}{\frac{1}{|I|}\sum_{i\in I}|U_i|-\hat{\alpha}_n}.
\end{align*}
Rearranging the above display, we get 
\begin{align*}
&\Bigl(\frac{1}{|I|}\sum_{i\in I}|U_i|-\hat{\alpha}_n\Bigr)\Bigl(1-   \frac{\sum_{i\in I}\simp_{n,i}}{\sum_{i\in I}\hat \simp_{n,i}} \Bigr) - (\alpha-\hat{\alpha}_n) \\
&= \Bigl(\frac{1}{|I|}\sum_{i\in I}|U_i|-\hat{\alpha}_n\Bigr)-(1+C_n)\Bigl(\frac{1}{|I|}\sum_{i\in I}|U_i|-\alpha\Bigr) - (\alpha-\hat{\alpha}_n) \\
    &= - C_n\Bigl(\frac{1}{|I|}\sum_{i\in I}|U_i|-\alpha\Bigr)
\end{align*}
for all $I\subset \{1, \dots, \mathsf{k}_n\}$. 
Now we define the random set $\mathcal{I}_n$ as 
$$
\mathcal{I}_n = \Bigl\{I \subset  \{1, \dots \mathsf{k}_n\}, \quad \frac{1}{|I|}\sum_{i\in I}|U_i| \le n \cdot \delta_n \Bigr\} \quad \text{where} \quad \mathsf{k}_n^{-1} \le \delta_n = o_p(n^{-\alpha/2}). 
$$
\editline{Note that the random set $\mathcal{I}_n$ is always non-empty since it contains $I=\{1, \dots, \mathsf{k}_n\}$ by the assumption $\delta_n \ge \mathsf{k}_n^{-1}$.} 
Using $|C_n|\le \frac{\bar{\theta}+\alpha}{n+\underline\theta}$ and $ \frac{1}{|I|}\sum_{i\in I}|U_i| \le n \delta_n$ for all $I\in \mathcal{I}_n$, we have
\begin{align*}
 \max_{I\in \mathcal{I}_n} \left|\Bigl(\frac{1}{|I|}\sum_{i\in I}|U_i|-\hat{\alpha}_n\Bigr)\Bigl(1-   \frac{\sum_{i\in I}\simp_{n,i}}{\sum_{i\in I}\hat \simp_{n,i}} \Bigr) - (\alpha-\hat{\alpha}_n) \right| \le |C_n|  \max_{I\in \mathcal{I}_n}  \Bigl(\frac{1}{|I|}\sum_{i\in I}|U_i|-\alpha\Bigr) \le \frac{\bar\theta+\alpha}{\bar\theta+n} \cdot n \cdot \delta_n
\end{align*}
Multiplying the both sides by $\sqrt{\mathsf{k}_n \mathfrak{i}(\hat{\alpha}_n)} =O_p(n^{\alpha/2})$, with  $\delta_n=o_p(n^{-\alpha/2})$, we get
\begin{align}\label{eq:sup_I_conv}
  \sqrt{\mathsf{k}_n \mathfrak{i}(\hat{\alpha}_n)} \cdot \max_{I\in \mathcal{I}_n} \left|\Bigl(\frac{1}{|I|}\sum_{i\in I}|U_i|-\hat{\alpha}_n\Bigr)\Bigl(1-   \frac{\sum_{i\in I}\simp_{n,i}}{\sum_{i\in I}\hat \simp_{n,i}} \Bigr) - (\alpha-\hat{\alpha}_n) \right| = o_p(1).
\end{align}
Therefore, by the triangular inequality, we have 
\begin{align*}
  &\left| \max_{I\in \mathcal{I}_n}\sqrt{\mathsf{k}_n \mathfrak{i}(\hat{\alpha}_n)} \Bigl(\frac{1}{|I|}\sum_{i\in I}|U_i|-\hat{\alpha}_n\Bigr){\Bigl|1-   \frac{\sum_{i\in I}\simp_{n,i}}{\sum_{i\in I}\hat \simp_{n,i}} \Bigr|} -  \sqrt{\mathsf{k}_n \mathfrak{i}(\hat{\alpha}_n)} |\alpha-\hat{\alpha}_n| \right|\\
  &\le \sqrt{\mathsf{k}_n \mathfrak{i}(\hat{\alpha}_n)} \cdot   \max_{I\in \mathcal{I}_n} \left|
\Bigl(\frac{1}{|I|}\sum_{i\in I}|U_i|-\hat{\alpha}_n\Bigr){\Bigl|1-   \frac{\sum_{i\in I}\simp_{n,i}}{\sum_{i\in I}\hat \simp_{n,i}} \Bigr|} - |\alpha-\hat{\alpha}_n| \right|  
\\
&\le  \sqrt{\mathsf{k}_n \mathfrak{i}(\hat{\alpha}_n)} \cdot \max_{I\in \mathcal{I}_n} \left|\Bigl(\frac{1}{|I|}\sum_{i\in I}|U_i|-\hat{\alpha}_n\Bigr)\Bigl(1-   \frac{\sum_{i\in I}\simp_{n,i}}{\sum_{i\in I}\hat \simp_{n,i}} \Bigr) - (\alpha-\hat{\alpha}_n) \right| && \text{the triangular inequality}\\
&= o_p(1) && \text{by \eqref{eq:sup_I_conv}}. 
\end{align*}
\editline{
Combined with $ \sqrt{\mathsf{k}_n \mathfrak{i}(\hat{\alpha}_n)} (\alpha-\hat{\alpha}_n) \to N$ $\stable$ by \Cref{thm:mle_clt} and substituting $\sum_{i\in I}\hat \simp_{n,i} = \frac{1}{n}\sum_{i \in I} (|U_i|-\hat\alpha_n) = \frac{|I|}{n} (\frac{1}{|I|} \sum_{i \in I} |U_i|-\hat\alpha_n)$ for $I\subset \{1, \dots, \mathsf{k}_n\}$, we get
\begin{align*}
    \max_{I\in \mathcal{I}_n}\sqrt{\mathsf{k}_n \mathfrak{i}(\hat{\alpha}_n)} \cdot \frac{n}{|I|} \cdot \Bigl|\sum_{i\in I}\hat \simp_{n,i} -  \sum_{i\in I}\simp_{n,i} \Bigr| \to N \quad \stable.
\end{align*}
This completes the proof of \eqref{eq:forall_I} in \Cref{theorem:ratio_consistency}. 
}
\editline{
\begin{remark}\label{rm:exist_I}
Using \eqref{eq:sup_I_conv}, by the same argument for $\max_{I\in \mathcal{I}_n}$, we also have 
\begin{align*}
  \left| \min_{I\in \mathcal{I}_n}\sqrt{\mathsf{k}_n \mathfrak{i}(\hat{\alpha}_n)} \Bigl(\frac{1}{|I|}\sum_{i\in I}|U_i|-\hat{\alpha}_n\Bigr){\Bigl|1-   \frac{\sum_{i\in I}\simp_{n,i}}{\sum_{i\in I}\hat \simp_{n,i}} \Bigr|} -  \sqrt{\mathsf{k}_n \mathfrak{i}(\hat{\alpha}_n)} |\alpha-\hat{\alpha}_n| \right| = o_p(1).
\end{align*}
Therefore, \eqref{eq:forall_I} also holds with $\forall I\in \mathcal{I}_n$ replaced by $\exists I \in \mathcal{I}_n$. 
\end{remark}
}

\subsection{Proof of \Cref{prop:compare_uniform_local}}
\editline{
  Recall the definition of $\mathcal{I}_n$ in \eqref{eq:def_random_subsets} and the simplex $\bm{\simp}_n$ in \eqref{eq:def_simplex_estimator}.
  The first claim, $\max_{I\in \mathcal{I}_n} \frac{|I|}{n \sqrt{\mathsf{k}_n}} = \frac{\sqrt{\mathsf{k}_n}}{n}$, immediately follows from $\mathcal{I}_n \subset 2^{\{1, \dots, \mathsf{k}_n\}}$ and $\{1, \dots, \mathsf{k}_n\} \in \mathcal{I}_n$ by the assumption $\delta_n\ge 1/\mathsf{k}_n^{-1}$. 

The second claim follows from the fact that for any $I\in \mathcal{I}_n$, noting $ I \subset \{1, \dots, \mathsf{k}_n\}$, 
\begin{align*}
  \frac{\frac{|I|}{n \sqrt{\mathsf{k}_n}}}{\bm{\hat\simp}_n(I)} = \frac{\frac{|I|}{n \sqrt{\mathsf{k}_n}}}{\frac{1}{n}\sum_{i \in I} (|U_i|-\hat\alpha_n)} = \frac{1}{\sqrt{\mathsf{k}_n}} \frac{1}{\frac{1}{|I|}\sum_{i\in I}|U_i| - \hat{\alpha}_n} \le \frac{1}{\sqrt{\mathsf{k}_n}} \frac{1}{1- \hat{\alpha}_n} 
\end{align*}
where we used $\frac{1}{|I|}\sum_{i\in I}|U_i|\ge \frac{1}{|I|}\sum_{i\in I} 1=1$ in the last inequality. 

As for the third claim, for each $I\subset \{0, 1, \dots, \mathsf{k}_n\}$ such that $I\notin \mathcal{I}_n$, let us consider the two cases $0\in I$ and $0\not\in I$ (i.e., $I\subset\{1, \dots, \mathsf{k}_n\}$) separately. In the former case, using $\bm{\hat\simp}_n(I) \ge \hat{\simp}_{n,0} = \frac{\mathsf{k}_n}{n}\hat{\alpha}_n$, we get 
$$
 \frac{\frac{\sqrt{\mathsf{k}_n}}{n}}{\bm{\hat\simp}_n(I)} \le  \frac{\frac{\sqrt{\mathsf{k}_n}}{n}}{\frac{\mathsf{k}_n}{n}\hat{\alpha}_n} = \frac{1}{\sqrt{\mathsf{k}_n}\hat{\alpha}_n}.
$$
In the latter case, by the definition of $\mathcal{I}_n$, we must have $\frac{1}{|I|}\sum_{i\in I} |U_i| > n \cdot \delta_n$ and hence
$$
  \frac{\frac{\sqrt{\mathsf{k}_n}}{n}}{\bm{\hat\simp}_n(I)} = \frac{\frac{\sqrt{\mathsf{k}_n}}{n}}{\frac{1}{n}\sum_{i \in I} (|U_i|-\hat\alpha_n)} < \frac{\sqrt{\mathsf{k}_n}}{|I| (n\delta_n - \hat{\alpha}_n)} \le \frac{\sqrt{\mathsf{k}_n}}{n\delta_n - 1}
$$
where we used $|I|\ge 1$ and $\hat{\alpha}_n\le 1$ in the last inequality. 
Therefore, taking the maximum of the above two upper bounds, we obtain the claim. 
}

\section{Comparison with other simplex estimators}\label{sec:comparison}
\editline{
  Recall that our estimator $\bm{\hat\simp}_n$ defined in \eqref{eq:def_simplex_estimator} is derived from the true simplex $\bm{\simp}_n$ \eqref{eq:def_simplex_true} by plugging the Dirac measure $\mu=\delta_0$ and the QMLE $\alpha=\hat{\alpha}_n$. There are other estimators that we can naturally think of. 

One simple alternative is the empirical frequency estimator:
  $$
  \bm{\tilde{\simp}}_n  \equiv \Bigl(
0, \frac{|U_1|}{n}, \dots, \frac{|U_{\mathsf{k}_n}|}{n} 
  \Bigr),
  $$
 which is obtained from the true simplex $\bm{\simp}_n$ by setting $\mu=\delta_0$ and $\alpha=0$. 
Now we derive its convergence rate and show that it is suboptimal compared with our estimator $\bm{\hat\simp}_n$. Using the proof of \Cref{prop:tv_decompose} with $\hat{\alpha}$ replaced by  $0$, we obtain
  \begin{align*}
d_{\tv}(\bm{\tilde{\simp}}_n , \bm{\simp}_n) = \frac{\mathsf{k}_n}{n}\alpha + C_n, \quad |C_n|\le \frac{\bar{\theta}-\underline{\theta}}{n+\underline{\theta}}. 
  \end{align*}
  Combined with $n^{-\alpha}\mathsf{k}_n\to \diversity$  almost surely and in $L_2$ (\Cref{thm:Lp_converge}), multiplying both sides by $n^{1-\alpha}$ and noting that the second term vanishes, we obtain
  $$
    n^{1-\alpha} d_{\tv}(\bm{\tilde{\simp}}_n , \bm{\simp}_n) \to \alpha \diversity  \quad \text{almost surely and in $L_2$}.
  $$  
  In particular, we have  $d_{\tv}(\bm{\tilde{\simp}}_n , \bm{\simp}_n) = O_p(n^{-1+\alpha})$. On the other hand, our estimator $\bm{\hat\simp}_n$ satisfies $d_{\tv}(\bm{\hat{\simp}_n} , \bm{\simp}_n) = O_p(n^{-1+\alpha/2})$ by \Cref{theorem:tv}, which decays faster than the rate $n^{-1+\alpha}$. Thus, the frequency estimator is suboptimal in terms of convergence rate under the TV distance. 

  Instead of plugging $\mu=\delta_0$, one may also consider $\mu=\delta_{\theta}$ for some $\theta\ne 0$ such that $\theta>-\alpha$. For clarity, let us denote the resulting estimator by $\bm{\hat\simp}_n^\theta$, namely
  $$
 \bm{\hat \simp}_{n}^\theta \equiv \Bigl(
  \frac{\theta + \mathsf{k}_n \hat{\alpha}_n,}{n+\theta}, 
\frac{|U_1|-\hat{\alpha}_n}{n+\theta}, \dots, \frac{|U_{\mathsf{k}_n}|-\hat{\alpha}_n}{n+\theta}
  \Bigr).
  $$  
We claim that this estimator is asymptotically equal to our estimator $\bm{\hat \simp}_{n}$. Indeed, by the definition of $\bm{\hat \simp}_{n}^\theta $ and $\bm{\hat \simp}_{n}$, using $\sum_{i=1}^{\mathsf{k}_n} |U_i| = n$, we have 
  \begin{align*}
  d_{\tv}(\bm{\hat \simp}_{n}^\theta, \bm{\hat\simp}_n) &= \frac{1}{2} \Bigl|  \frac{\theta + \mathsf{k}_n \hat{\alpha}_n,}{n+\theta} - \frac{\mathsf{k}_n}{n}\hat{\alpha}_n
    \Bigr| + \frac{1}{2} \sum_{i=1}^{\mathsf{k}_n} \Bigl|
 \frac{|U_i|-\hat{\alpha}_n}{n+\theta}-
 \frac{|U_i|-\hat{\alpha}_n}{n}
    \Bigr|\\
    &= \frac{1}{2} \Bigl|\frac{(n-\mathsf{k}_n\hat\alpha_n)\theta}{n(n+\theta)}\Bigr| + \frac{1}{2} \sum_{i=1}^{\mathsf{k}_n} \Bigl|
\frac{-(|U_i|-\hat{\alpha}_n)\theta}{n(n+\theta)}
    \Bigr| \\
    &= \Bigl(\frac{1}{2} + \frac{1}{2}\Bigr) \frac{(n-\mathsf{k}_n\hat\alpha_n) |\theta|}{n(n+\theta)} && \text{by $\sum_{i=1}^{\mathsf{k}_n} |U_i| = n$}\\
    &\le \frac{|\theta|}{n+\theta} && \text{by $n-\mathsf{k}_n\hat\alpha_n \le n$}
  \end{align*}
  Therefore, by the triangular inequality,
  $$
    d_{\tv}(\bm{\hat \simp}_{n}^\theta, \bm{\simp}_n) =   d_{\tv}(\bm{\hat \simp}_{n}, \bm{\simp}_n) + C_n, \quad |C_n|\le \frac{|\theta|}{n+\theta}. 
  $$
Multiplying both sides by $n^{1-\alpha/2}$, noting that the error term $C_n$ vanishes, we obtain
\Cref{theorem:tv} for $\bm{\hat\simp}_n^\theta$ as well. 

For these reasons, we focused on the current simple estimator obtained by setting  $\alpha=\hat{\alpha}_n$ and $\mu=\delta_0$. 
}

\end{document}